\documentclass[11pt]{article}
\usepackage[american]{babel}
\usepackage{amsmath}
\usepackage{dsfont,mathtools,amssymb}
\usepackage{color}
\mathtoolsset{showonlyrefs,showmanualtags}

\usepackage[
backend=bibtex,
style=alphabetic,
sorting=nty,
maxbibnames=99
]{biblatex}

\addbibresource{CouplingEntropy.bib} 

\usepackage{hyperref}
\usepackage[dvipsnames]{xcolor}
\hypersetup{
	colorlinks=true,
	pdfpagemode=UseNone,
    citecolor=OliveGreen,
    linkcolor=NavyBlue,
    urlcolor=Magenta,
	pdfstartview=FitW
}

\usepackage{amsthm}
\usepackage[margin = 1in]{geometry}
\usepackage{bbm}
\usepackage{bm}
\usepackage{verbatim}
\usepackage{graphicx}
\usepackage{color}
\usepackage{tikz}
\usetikzlibrary{shapes,arrows,fit}
\usepackage{subfigure}

\usepackage{thmtools}
\usepackage{thm-restate}

%

\newtheorem{theorem}{Theorem}[section]

\newtheorem{lemma}[theorem]{Lemma}

\numberwithin{equation}{section}
\numberwithin{figure}{section}

\theoremstyle{definition}
\newtheorem{definition}[theorem]{Definition}

\theoremstyle{remark}
\newtheorem{remark}[theorem]{Remark}

\newcommand{\Av}{\mathrm{Av}}

\renewcommand{\rho}{\varrho}
\renewcommand{\phi}{\varphi}

\newcommand{\R}{\mathbb{R}}
\newcommand{\B}{\mathcal{B}}

\newcommand{\integers}{\mathbb{Z}}
\newcommand{\IND}{{\bf 1}}
\newcommand{\scalar}[2]{\langle #1 , #2\rangle}
\newcommand{\tmix}{T_{\rm mix}}
\newcommand{\poly}{\mathrm{poly}}

\DeclareMathOperator{\var}{Var}
\DeclareMathOperator{\cov}{Cov}

\DeclareMathOperator{\dist}{dist}

\DeclareMathOperator{\diag}{diag}
\DeclareMathOperator{\supp}{supp}

\DeclareMathOperator{\Ent}{Ent}


\newcommand{\eps}{\epsilon}
\newcommand{\be}{\begin{equation}}


\newcommand{\cC}{\ensuremath{\mathcal C}}
\newcommand{\cD}{\ensuremath{\mathcal D}}

\newcommand{\cS}{\ensuremath{\mathcal S}}


\newcommand{\bbE}{{\ensuremath{\mathbb E}} }

\newcommand{\bbN}{{\ensuremath{\mathbb N}} }

\newcommand{\bbP}{{\ensuremath{\mathbb P}} }

\newcommand{\bbR}{{\ensuremath{\mathbb R}} }

\newcommand{\bbZ}{{\ensuremath{\mathbb Z}} }

\newcommand{\si}{\sigma}
\newcommand{\ent}{{\rm Ent} }

\newcommand{\tc}{\, |\, }

%
\let\a=\alpha \let\b=\beta   \let\d=\delta  
 \let\g=\eta     \let\k=\kappa  
\let\m=\mu            
  \let\s=\sigma \let\t=\tau   
  
\let\D=\Delta   \let\G=\Gamma  \let\L=\Lambda 
\let\O=\Omega

\def\({\left(}
\def\){\right)}
%


\newcommand{\hamming}[2]{d_{\mathrm{H}}\left( #1, #2 \right)}
\newcommand{\tv}[2]{d_{\mathrm{TV}}\left( #1, #2 \right)}

\newcommand{\sgn}{\mathrm{sgn}}

\newcommand{\VSpairs}{\mathcal{X}}
\newcommand{\vx}{x}
\newcommand{\vy}{y}
\newcommand{\spa}{a}
\newcommand{\spb}{a'}
\newcommand{\ALOi}{J}
\newcommand{\Pinning}{\mathcal{T}}

\newcommand{\loc}{\Phi}
\newcommand{\ctrct}{\kappa}

\newcommand{\select}{p}

\newcommand{\Cgbf}{C_{\textsc{gbf}}}
\newcommand{\Cubf}{C_{\textsc{ubf}}}
\newcommand{\Cellubf}{C_{\textsc{ubf}}}

\newcommand{\Joint}{\O_{\textsc{j}}}

\thispagestyle{empty}



%
\date{\today}

\title{On Mixing of Markov Chains: Coupling, Spectral Independence, and Entropy Factorization}

\author{
	Antonio Blanca\thanks{Department of Computer Science and Engineering, Penn State, University Park, PA 16801, USA.
		Email: {ablanca@cse.psu.edu}.
		Research supported in part by NSF grant CCF-1850443.}
	\and
	Pietro Caputo\thanks{Department of Mathematics, University of Roma Tre, Largo San Murialdo 1, 00146 Roma, Italy.
	Email: \{pietro.caputo,daniel.parisi@uniroma3.it\}}
\and
	Zongchen Chen\thanks{School of Computer Science, Georgia Institute of Technology, Atlanta, GA 30332, USA.
	Email: \{chenzongchen,vigoda\}@gatech.edu.
	Research supported in part by NSF grant CCF-2007022.}
	\and
	Daniel Parisi$^\dag$
\and
Daniel \v{S}tefankovi\v{c}\thanks{Department of Computer Science, University of Rochester, Rochester, NY 14627, USA.
	Email: stefanko@cs.rochester.edu.
	Research supported in part by NSF grant CCF-2007287.}
		\and
	Eric Vigoda$^\ddag$
}

\begin{document}

\maketitle

\begin{abstract}
For general spin systems, we prove that a contractive coupling for any local Markov chain
implies optimal bounds on the mixing time and the
modified log-Sobolev constant for a large class of Markov chains including the Glauber dynamics, arbitrary heat-bath block dynamics, and the
Swendsen-Wang dynamics.  This reveals a novel connection between probabilistic techniques for bounding the convergence to stationarity and
analytic tools for analyzing the decay of relative entropy.  As a 
corollary of our general results, we obtain $O(n\log{n})$
mixing time and $\Omega(1/n)$ modified log-Sobolev constant of the Glauber dynamics
for sampling random $q$-colorings of an $n$-vertex graph with constant maximum degree $\Delta$ when $q > (11/6 - \eps_0)\Delta$
for some fixed $\eps_0>0$.  We also obtain $O(\log{n})$ mixing time and $\Omega(1)$ modified log-Sobolev constant of
the Swendsen-Wang dynamics for the ferromagnetic Ising model on an $n$-vertex graph of
constant maximum degree when the parameters of the system lie in the tree uniqueness region.
At the heart of our results are new techniques for establishing spectral independence of the spin system and block factorization of the relative entropy.  On one hand we prove
that a contractive coupling of a local Markov chain implies spectral independence of the Gibbs distribution.  On the other hand we show that
spectral independence implies factorization of entropy for arbitrary blocks, establishing optimal bounds
on the modified log-Sobolev constant of the corresponding block dynamics.
\end{abstract}

\thispagestyle{empty}

\newpage

\setcounter{page}{1}

\section{Introduction}

 Spectral independence is a powerful new approach for proving fast
 convergence of Markov chain Monte Carlo (MCMC) algorithms.
 The technique was introduced by Anari, Liu, and Oveis Gharan~\cite{ALO20}
 to establish rapid mixing of the Glauber dynamics by utilizing high-dimensional expanders.
For a spin system defined on a graph $G=(V,E)$, the Glauber dynamics
 is the simple single-site update Markov chain which updates the spin at a randomly
 chosen vertex in each step.   The mixing time is the number of steps to reach
 close to the stationary distribution.

 Our paper addresses two broad questions.
 First, what are the implications of spectral independence?  In particular, does it imply
 fast convergence for other Markov chains beyond the simple Glauber dynamics?
 We prove that it does: we
show that spectral independence implies optimal mixing time
bounds and modified log-Sobolev constants
for a broad class of chains, including all possible
heat-bath block dynamics and the Swendsen-Wang dynamics.
 Our proof utilizes recent work on entropy factorization~\cite{CP20}.

 Our second question is when does spectral independence hold, and how
 does it relate to traditional proof approaches, such as coupling techniques?
Here again we prove a general result, showing that a contractive coupling
for any \emph{local} Markov chain implies
spectral independence. 
 This immediately yields
 stronger than state of the art mixing time bounds for a variety of chains.  In addition, it provides
 an intriguing conceptual connection between the coupling method and modified log-Sobolev inequalities as we describe below.

There are two broad approaches for establishing fast convergence of
 MCMC algorithms: probabilistic or analytic techniques.
Probabilistic techniques primarily utilize the coupling method; a popular example is the
path coupling method which has become a fundamental tool in theoretical computer science~\cite{BubleyDyer}.
In contrast, analytic techniques establish decay to equilibrium by means of functional inequalities such as Poincar\'e or
log-Sobolev inequalities, which correspond to decay of variance and relative entropy respectively.
In particular, the so-called modified log-Sobolev inequality is often a powerful analytic tool in establishing
tight bounds on the mixing time, while the weaker Poincar\'e inequality provides control on the spectral gap; see, e.g.,  \cite{DiaSal96,Martinelli99,BT06}.  

These two approaches---probabilistic or analytic---appeared disparate. While coupling techniques have been used to prove  Poincar\'e inequalities, there are no clear relations  between the probabilistic approach and log-Sobolev inequalities.
We establish a strong connection by proving 
that coupling inequalities in the form of
bounds on the Ollivier-Ricci curvature of the Markov chain imply
entropy decay, and hence the associated 
modified log-Sobolev inequality holds; see
Section~\ref{sec:prelim} for definitions. In the context of spin systems on bounded-degree graphs, this settles a remarkable  
conjecture of Peres and Tetali (see Conjecture 3.1 in~\cite{ELL} and Remark~\ref{Remark:PTconj}).

Our technical contributions apply in the general setting of $q$-state spin systems.  This is a convenient setting to capture a wide family of
distributions defined on graphs, including the equilibrium distribution of undirected graphical models.
We now introduce some relevant notation and refer to  Section~\ref{sec:prelim} for a formal definition of  general spin systems.
Let $G=(V,E)$ be a $n$-vertex graph and let $\Delta$ denote the maximum degree of $G$.
For integer $q\geq 2$ the state space of the model is the set $\Omega=\{\sigma\in [q]^V:\mu(\sigma)>0\}$ of
assignments with positive weight in the Gibbs distribution~$\mu$.

Canonical examples of a spin system
include the Ising model (with $q=2$ spin values) and the Potts model (with $q\geq 3$);
in these models, for an inverse temperature parameter
$\beta$, a configuration $\sigma\in\Omega$
has probability $\mu(\sigma) \propto \exp(\beta M(\sigma))$
where $M(\sigma)$ is the number of edges of $G$ which are monochromatic in $\sigma$.
The Ising/Potts model is ferromagnetic when $\beta>0$ and antiferromagnetic when $\beta<0$.
The hard-core model is another spin system defined on the set of independent sets of $G$ weighted by a parameter $\lambda>0$;
each independent set $\sigma$ has probability proportional to $\lambda^{|\sigma|}$ in the Gibbs distribution.
The $q$-colorings model, where the Gibbs distribution is uniform over the collection of proper vertex $q$-colorings of $G$, is also a classical spin system.
%

The Glauber dynamics is the simplest MCMC approach for sampling from the Gibbs distribution~$\mu$.
The transitions of the Markov chain $(X_t)$
update the spin at a randomly chosen vertex in each step.  From $X_t\in\Omega$, we choose
a random vertex $\vx$, set $X_{t+1}(\vy)=X_t(\vy)$ for all $\vy\neq \vx$, and
the spin $X_{t+1}(\vx)$ is chosen from the marginal distribution at $\vx$ conditional on the current spins on $N(\vx)$,
the neighborhood of $\vx$.   The mixing time is the number of steps, from the worst initial state, to get
close to the stationary distribution; see Section~\ref{sec:prelim}.

Our results apply more broadly to the general class of heat-bath block dynamics.
Let $\B=\{B_1,\dots,B_\ell\}$ be any collection of sets (or blocks) such that $V=\cup_i B_i$ and  let $\alpha=(\alpha_B)_{B\in\B}$ be a
probability distribution on $\B$.
A step of the heat-bath block dynamics operates by choosing a block $B$ with probability $\alpha_B$
and updating the configuration in $B$ with a sample from the Gibbs distribution conditional on the
configuration on $V\setminus B$.  Note that the Glauber dynamics corresponds to setting the blocks
to individual vertices with uniform weights, and for a bipartite graph the even-odd chain (also known as 
the alternating scan dynamics) corresponds to uniform weighting
for two blocks corresponding to the two parts.
By extending the weight to $\alpha_B=0$ if $B\notin\B$ we think of  $\alpha $ as a distribution over all subsets of $V$ and speak of the $\alpha$-weighted heat-bath block dynamics.

Given $\alpha$, define the minimum ``coverage probability'' of a vertex by
\begin{equation}\label{eqn:delta}
\delta = \delta(\alpha) = \min_{\vx\in V} \sum_{B:B\ni \vx} \alpha_B.
\end{equation}
We say that the block dynamics have \emph{optimal mixing} when there exists a constant $C$ such that for all weights $\alpha$ the mixing time of the $\alpha$-weighted heat-bath block dynamics is at most $C\delta(\alpha)^{-1}\log{n}$. Similarly, we say that  the block dynamics have
\emph{optimal entropy decay} if the modified log-Sobolev constant of the $\alpha$-weighted heat-bath block dynamics is at least $\delta(\alpha)/C$.
Note that the constant $C$ may depend on the parameters defining the spin system and on the maximum degree $\Delta$, but it does not depend on $n$ and it is independent of the choice of weights $\alpha$. In this generality, these bounds are optimal up to the value of the constant $C$. Indeed,   for the Glauber dynamics we have $\delta(\alpha)=1/n$ and the mixing time matches the $\Omega(n\log{n})$ lower bound established by Hayes and Sinclair~\cite{HayesSinclair}
for bounded-degree graphs.
Moreover,  by restricting to test functions of a single spin it is not hard to check that the spectral gap of the $\a$-weighted block dynamics is always at most $\delta(\alpha)$, and therefore the lower bound $\delta(\alpha)/C$ on the modified log-Sobolev constant of the block dynamics is optimal up to the multiplicative constant $1/C$; see e.g.\ \cite{BT06} for standard relations between spectral gap and modified log-Sobolev constant.

\subsection{Applications}

We begin with a few examples of applications of our results.  We then delve into our general technical contributions in subsequent subsections.
We note that all of these applications follow immediately from previous coupling proofs together with our new technical contributions.

For $q$-colorings of graphs with maximum degree $\Delta$, Jerrum~\cite{Jerrum} proved that the Glauber dynamics
has $O(n\log{n})$ mixing time when $q>2\Delta$.
Jerrum's result was improved to $q>\frac{11}{6}\Delta$ in~\cite{Vig00} and further improved
to $q>(\frac{11}{6}-\eps_0)\Delta$ for some small $\eps_0\approx 10^{-5}>0$ by
Chen et al.~\cite{CDMPP19} by analyzing a Markov chain referred to as the flip dynamics; this implied
$O(n^2)$ mixing time of the Glauber dynamics.  We obtain $O(n\log{n})$ mixing time of the Glauber dynamics,
which is asymptotically optimal~\cite{HayesSinclair}, and also obtain optimal bounds on the log-Sobolev and
modified log-Sobolev constants.

\begin{theorem}\label{thm:colorings}
For $q$-colorings on an $n$-vertex graph of maximum degree $\Delta$, when $q>(\frac{11}{6}-\eps_0)\Delta$, where $\eps_0\approx 10^{-5}>0$ is a fixed constant,
the Glauber dynamics has mixing time $O(n\log{n})$ and log-Sobolev and modified log-Sobolev constants $\Omega(1/n)$.
More generally, under these assumptions all block dynamics 
have optimal mixing and optimal entropy decay. 
\end{theorem}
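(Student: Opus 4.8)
The proof is a direct application of the two general results advertised in the introduction: (i) a contractive coupling for a local Markov chain implies spectral independence of the Gibbs distribution, and (ii) spectral independence implies factorization of entropy for arbitrary blocks, hence optimal mixing and optimal entropy decay for every heat-bath block dynamics. The plan is therefore to supply the coupling input for $q$-colorings from the existing literature, feed it into (i), propagate the resulting spectral independence to all conditional distributions, and then invoke (ii); specializing the block statement to single vertices will give the Glauber bounds.

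For the coupling input I would cite Chen, Delcourt, Moitra, Perarnau, and Postle~\cite{CDMPP19}: they introduced the \emph{flip dynamics}, a local Markov chain on proper $q$-colorings in which a step rerandomizes the colors on a (bounded, suitably tailed) connected set of vertices, and they exhibited a one-step coupling of adjacent colorings that contracts the Hamming distance by a factor $1-\Omega(1/n)$ whenever $q>(\frac{11}{6}-\eps_0)\Delta$. This is exactly a positive lower bound on the Ollivier–Ricci curvature of a local chain, so hypothesis (i) applies and yields $\eta$-spectral independence of the uniform distribution on proper $q$-colorings of $G$, with $\eta=\eta(\Delta,\eps_0)$ a constant independent of $n$. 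Moreover (ii) requires spectral independence not only for $\mu$ but for every pinning $\mu^{\tau}$, i.e.\ every conditional distribution obtained by fixing the colors on an arbitrary set of vertices. Conditioning on a partial coloring turns the instance into a list-coloring problem on a subgraph of $G$ in which every remaining vertex has at least $q-\Delta$ available colors and effective degree at most $\Delta$, so the inequality $q>(\frac{11}{6}-\eps_0)\Delta$ is inherited; since the construction of~\cite{CDMPP19} is carried out in this list-coloring generality (or can be re-run verbatim on each pinned instance), the curvature bound, and hence $O(1)$-spectral independence, holds uniformly over all pinnings.

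With $O(1)$-spectral independence over all pinnings in hand, together with the elementary fact that single-vertex marginals of a list-coloring instance with lists of size $>\Delta$ are bounded away from $0$ and $1$, result (ii) (via the entropy-factorization machinery of~\cite{CP20}) gives block factorization of entropy with a constant $C=C(\Delta,\eps_0)$. The general theorems then yield, for every weight $\alpha$, a modified log-Sobolev constant at least $\delta(\alpha)/C$ and mixing time at most $C\,\delta(\alpha)^{-1}\log n$ for the $\alpha$-weighted heat-bath block dynamics, i.e.\ optimal mixing and optimal entropy decay; taking $\alpha$ supported on single vertices, where $\delta(\alpha)=1/n$, gives mixing time $O(n\log n)$ and modified log-Sobolev constant $\Omega(1/n)$ for the Glauber dynamics. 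The standard log-Sobolev bound $\Omega(1/n)$ follows along the same lines: the bounded-marginal property gives a uniform lower bound on the single-site log-Sobolev constant, and the same entropy-factorization inequality lifts it to the block dynamics with only a constant-factor loss.

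\textbf{Main obstacle.} The substantive content is packed into the two general theorems, which I am assuming; within this application the only step that genuinely needs care is the hereditary one — verifying that the contraction of~\cite{CDMPP19} transfers to every pinned list-coloring instance with a constant depending only on $\Delta$ and $\eps_0$ (uniformly in which vertices are pinned and in $n$), and that the flip move meets the precise notion of ``local'' required by hypothesis (i). Both are essentially bookkeeping once the definitions are unwound, which is why the theorem follows immediately from the coupling literature combined with the new machinery.
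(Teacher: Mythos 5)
Your proposal is correct and follows essentially the same route as the paper: contraction of the flip dynamics from \cite{CDMPP19} (which, as in Lemma~\ref{lem:flip_dyn}, is actually stated for a $2$-equivalent metric rather than the plain Hamming metric, a distinction absorbed by Theorem~\ref{thm:general}), extended to all pinnings via the list-coloring formulation, yields spectral independence (Theorem~\ref{thm:coloring-SI}), and then Theorem~\ref{thm:block-factorization} gives block factorization and hence optimal mixing and entropy decay, with Glauber as the single-vertex special case.
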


For the ferromagnetic Ising model, Mossel and Sly~\cite{MS} established optimal mixing time bounds
of $O(n\log{n})$ for the Glauber dynamics on any graph of maximum degree $\Delta$ in the tree uniqueness region; that is, for all $\beta< \beta_c(\Delta)$, where  $\beta_c(\Delta) := \ln(\frac{\Delta}{\Delta-2})$ is the threshold of the uniqueness/non-uniqueness phase transition
on the $\Delta$-regular tree. Our general results allow us to extend this to arbitrary heat-bath block dynamics and to the Swendsen-Wang dynamics~\cite{SW}.
The latter is a particularly interesting Markov chain which utilizes
the random-cluster representation of the ferromagnetic model to perform global updates in a single step; see
Section~\ref{sec:SW} for its definition.  This non-local nature
makes tight analysis of the Swendsen-Wang dynamics challenging.
In~\cite{BCV},
it was shown that the mixing time of
Swendsen-Wang dynamics on any graph of maximum degree $\Delta$ in the tree uniqueness region is $O(n).$
Our general results imply a bound of $O(\log{n})$ on the mixing time of the Swendsen-Wang dynamics and a bound of $\Omega(1)$
on the corresponding modified log-Sobolev constant
in the same tree uniqueness region. As shown in \cite{BCPSV20} for the special case of the $d$-dimensional integer lattice $\bbZ^d$, these estimates are optimal up to a multiplicative constant. Our results also yield new optimal bounds on the log-Sobolev and modified log-Sobolev constants for
the Glauber dynamics in the same setting.

%
%

Moreover, we obtain improved results for the ferromagnetic Potts model.  Unlike the Ising model, for the ferromagnetic Potts model
known rapid mixing results for the Glauber dynamics do not reach the tree uniqueness threshold.  The best known results~\cite{Hayes,Ullrich,BGP}
imply that
the Glauber dynamics mixes in $O(n\log{n})$ steps when $\beta<\beta_0$ where $\beta_0=\max\big\{ \frac{2}{\Delta},\frac{1}{\Delta}\ln(\frac{q-1}{\Delta}) \big\}$. In addition,~\cite{BGP} showed $\poly(n)$ mixing of the Glauber dynamics
for $\beta< \beta_1$ where $\beta_1=(1-o(1))\frac{\ln q}{\Delta-1}$, the $o(1)$ term tends to $0$ as $q\rightarrow\infty$; see Remark~\ref{rmk:BGP} for more details.
These results yield polynomial mixing time bounds for the Swendsen-Wang dynamics in the corresponding regimes of $\beta$.
Note the critical point for the uniqueness threshold on the tree was established by H\"{a}ggstr\"{o}m~\cite{Haggstrom}
and it behaves as $\beta_u = \frac{\ln q}{\Delta-1} + O(1)$; see \cite{BGP}.
In both regimes, we prove optimal bounds for the mixing time and (modified) log-Sobolev constant of the Glauber dynamics and
also for the Swendsen-Wang dynamics. 

\begin{theorem}\label{thm:Ising-Potts-mixing}
For the \emph{ferromagnetic Ising model} with 
$\beta<\beta_c(\Delta)$ on any $n$-vertex
graph of maximum degree $\Delta\geq 3$,  all heat-bath block dynamics have optimal mixing and optimal entropy decay, and the Swendsen-Wang dynamics has optimal mixing time $O(\log n)$ and optimal modified log-Sobolev constant $\O(1)$. 
For the \emph{ferromagnetic Potts model} the same results hold when $\beta < \max\{\beta_0,\beta_1\}$.
\end{theorem}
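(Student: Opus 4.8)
The plan is to obtain Theorem~\ref{thm:Ising-Potts-mixing} by feeding known \emph{contractive couplings for local chains} into our two general implications---coupling $\Rightarrow$ spectral independence, and spectral independence $\Rightarrow$ block factorization of entropy---and, for the Swendsen--Wang dynamics, by identifying it with a two-block heat-bath dynamics on the Edwards--Sokal joint distribution. No new coupling is needed: the probabilistic input is already in the literature; the work is to invoke it correctly, to propagate the bounds to every conditional (pinned) Gibbs distribution with a constant independent of $n$, and to carry out the Swendsen--Wang reduction.

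\textbf{Block dynamics for Ising and Potts.} For the ferromagnetic Ising model with $\beta<\beta_c(\Delta)$, the analysis of Mossel and Sly~\cite{MS} yields a one-step contractive coupling of the Glauber dynamics on every graph of maximum degree $\Delta$, equivalently a strictly positive lower bound on its Ollivier--Ricci curvature; moreover this bound is inherited, with the same constant, by the Glauber dynamics of every conditional measure $\mu(\,\cdot\mid \tau_\Lambda)$ obtained by pinning a set $\Lambda\subseteq V$, since a ferromagnetic boundary condition can only help the coupling. Our coupling $\Rightarrow$ spectral-independence theorem then gives spectral independence of $\mu$ with a constant depending only on $\beta,\Delta$ and stable under pinnings, and our spectral-independence $\Rightarrow$ entropy-factorization theorem converts this into the asserted optimal entropy decay and optimal mixing for every $\alpha$-weighted heat-bath block dynamics. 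The Potts case for $\beta<\beta_0$ is identical: a standard path coupling (as in~\cite{Hayes,BGP}) supplies the contractive coupling of the Glauber dynamics, and because path coupling is performed on pairs differing at a single vertex it transfers verbatim to all conditional measures. In the regime $\beta<\beta_1$ the Glauber dynamics is only known to mix in $\poly(n)$ time, so instead we use the input of~\cite{BGP}: spectral independence of the Potts Gibbs measure there (with $n$-independent constant, stable under pinnings), established directly or via a contractive coupling of a small-block local dynamics, which again feeds our entropy-factorization theorem and yields the same block-dynamics conclusions.

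\textbf{The Swendsen--Wang dynamics.} Let $\nu$ be the Edwards--Sokal joint distribution on pairs $(\sigma,A)\in[q]^V\times 2^E$, whose marginals are the Potts measure $\mu$ and the random-cluster measure. A single step of Swendsen--Wang is exactly the two-block heat-bath dynamics on $\nu$ with blocks $B_1=V$ (resample $\sigma$ from $\nu(\,\cdot\mid A)$) and $B_2=E$ (resample $A$ from $\nu(\,\cdot\mid\sigma)$), for which $\delta=\tfrac12$, so that ``optimal'' means $O(\log n)$ mixing and $\Omega(1)$ modified log-Sobolev constant. It therefore suffices to establish the two-block factorization
\[
\Ent_\nu(f)\ \le\ C\big(\,\nu[\Ent(f\mid A)]+\nu[\Ent(f\mid \sigma)]\,\big)
\]
with $C$ independent of $n$. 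The term $\nu[\Ent(f\mid \sigma)]$ is over the law of $A$ given $\sigma$, an \emph{independent} percolation on the monochromatic edges of $\sigma$, hence a product measure, and tensorization handles it with constant $1$. For $\nu[\Ent(f\mid A)]$ we condition on $A$, observe that $\mu(\,\cdot\mid A)$ factorizes over the connected components of $(V,A)$, and apply---within each component, or directly through the same spectral-independence input, which $\mu(\,\cdot\mid A)$ inherits---the block factorization of entropy already proved for the Potts Gibbs distribution. Combining the two bounds gives the factorization for $\nu$, hence $\Omega(1)$ modified log-Sobolev constant for Swendsen--Wang by the standard comparison between systematic and random scan (see~\cite{Ullrich} and Section~\ref{sec:SW}); finally $\min_\sigma\mu(\sigma)\ge e^{-O(n)}$, so the generic modified log-Sobolev $\Rightarrow$ mixing estimate yields mixing time $O(\log n)$.

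\textbf{Main obstacle.} I expect the Swendsen--Wang reduction to be the delicate point: one must transfer the entropy factorization proved for the \emph{spin} system to the joint measure $\nu$ and to \emph{all} of its conditionings on edge sets $A$, checking that the projection $\nu\mapsto\mu$ loses nothing in the dependence on $n$, and one must reconcile the ``do $E$ then $V$'' structure of an actual Swendsen--Wang step with the random-scan two-block dynamics to which our general theorem applies. Running beneath everything is the requirement---easy to overlook---that spectral independence, and hence the factorization, must hold with a single $n$-independent constant for \emph{every} pinning of the model; for ferromagnetic systems this stability is immediate by monotonicity, but it is the crux of the reduction from the coupling hypotheses and must be checked in each regime.
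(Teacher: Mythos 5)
There is a genuine gap, and it sits in both halves of your plan. For the Ising model, the probabilistic input you invoke does not exist in the form you need: Mossel and Sly~\cite{MS} do \emph{not} exhibit a one-step contractive coupling of the Glauber dynamics for all $\beta<\beta_c(\Delta)$ — their proof is a block/censoring argument — and no (weighted-)Hamming contractive coupling of the single-site dynamics is known in the whole uniqueness region; one-step contraction is only available under Dobrushin-type conditions, which stop strictly short of $\beta_c(\Delta)$. The assertion that the contraction constant is ``inherited, with the same constant, by every pinned measure because a ferromagnetic boundary condition can only help the coupling'' is also an unproved monotonicity claim, and pinning-stability is exactly what the definition of spectral independence demands. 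The paper does not route the Ising case through the coupling~$\Rightarrow$~SI theorems at all: it imports spectral independence in the whole uniqueness region directly from \cite{CLV1}, where it is proved by the tree-recursion/potential method, and then applies Theorems~\ref{thm:block-factorization} and~\ref{thm:sw:general}. Your Potts discussion, by contrast, essentially matches the paper: $\beta<\beta_0$ via the contractive Glauber coupling (Lemma~\ref{lem:potts-gd} plus Theorem~\ref{thm:glauber-hamming}) and $\beta<\beta_1$ via the contractive small-block dynamics of \cite{BGP} viewed as a select-update dynamics (Lemma~\ref{lem:potts-block} plus Theorem~\ref{thm:general}); note, though, that \cite{BGP} does not establish spectral independence ``directly,'' so the coupling route is the only one available there.

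For Swendsen--Wang your frame is the right one (the spin/edge factorization \eqref{entfact2o-intro} for the Edwards--Sokal measure, then \cite{BCPSV20}), but the sketch of how to prove \eqref{entfact2o-intro} does not work as stated. The inequality requires bounding $\ent_\nu(f)$ by the \emph{sum} of the two conditional entropy terms; observing that $\nu(\cdot\tc\si)$ is a product over monochromatic edges and that $\nu(\cdot\tc A)$ is a product over components does not assemble into such a bound. Worse, conditioning on an edge set $A$ imposes equality constraints on spins across components — this is not a pinning of vertex spins — so spectral independence and the vertex-block factorization proved for $\mu$ do not transfer to $\mu(\cdot\tc A)$ ``by inheritance.'' The paper's actual mechanism is: from spectral independence obtain $\ell$-UBF (Theorem~\ref{th:reform}) and hence $k$-partite factorization \eqref{eq:kpart} for $\mu$ (Lemma~\ref{lem:ubf:boosting}); then prove \eqref{entfact2o-intro} via Lemmas~\ref{lem:conv}, \ref{lem:tensor} and \ref{lem:conc}: a monotonicity step, a \emph{per-vertex} local spin/edge factorization with constant $1/\d_1=O(\beta\Delta e^{\beta\Delta})$ lifted along the product structure of $\nu(\cdot\tc\overline{\si_{V_j}})$, and the transfer of the $k$-partite factorization from $\mu$ to the joint space, yielding $C=C_{\rm par}\cdot O(\beta\Delta^2 e^{\beta\Delta})$. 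Finally, the passage from \eqref{entfact2o-intro} to entropy decay of the actual SW step is \cite[Lemma 1.8]{BCPSV20}, which exploits the specific alternating structure; there is no generic ``systematic versus random scan'' comparison for modified log-Sobolev constants that you can fall back on. Your closing observation that $\log\log(1/\mu_*)=O(\log n)$ gives the $O(\log n)$ mixing time is fine (Lemma~\ref{lem:mix}), but the two steps above must be supplied for the proof to close.
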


\subsection{Spectral independence definitions}
\label{sec:intro-spectral-independence}

A central concept in our work is \emph{spectral independence}, which was introduced by
Anari, Liu and Oveis Gharan~\cite{ALO20} to establish polynomial mixing time bounds for the Glauber dynamics.
To formally define spectral independence it will be important to consider the effect of \emph{pinnings} which can
informally be viewed as boundary conditions.
For $U\subset V$, let $\Omega_U = \{\tau\in [q]^U: \exists \sigma\in\Omega, \sigma_U=\tau\}$ denote
the set of assignments to $U$ with valid extensions on the remaining vertices.
In particular, $\Omega_\vx$ denotes the set of all valid spin assignments for the vertex $\vx$ under $\mu$.
A {pinning} is a fixed assignment $\tau$ on some $U\subset V$ where $\tau\in\Omega_U$. We write $\mu^\t$ for the Gibbs measure $\mu(\cdot\tc\sigma_U=\t)$ obtained by conditioning on the given $\t$.  In the presence of a pinning $\tau$ on $U\subset V$, the definition of the
Glauber dynamics
remains the same with the assignment $\tau$ on $U$ fixed (see Remark~\ref{rmk:glauber-pinning} for a definition).
Let $\Pinning = \cup_{U \subset V} \Omega_U$ denote the collection of all pinnings,
and $\VSpairs = \{(\vx,\spa): \vx \in V, \spa \in \Omega_\vx\}$ for the set of all feasible vertex-spin pairs.

 The spectral independence approach considers
the following matrix which captures the pairwise influence of vertices.
For a pair of vertices $\vx,\vy$ and a pair of spins $\spa,\spa'$, it is the influence of the spin $\spa$ at $\vx$ on the marginal probability of $\spa'$ at $\vy$.

\begin{definition}[ALO influence matrix]\label{def:J}
The \emph{ALO influence matrix} $\ALOi \in \R^{\VSpairs \times \VSpairs}$ is defined by $\ALOi(\vx, \spa; \vx, \spa') = 0$ and
\[
\ALOi(\vx, \spa; \vy, \spa') = \mu(\si_\vy = \spa' \mid \si_\vx = \spa) - \mu(\si_\vy = \spa') \quad \text{~for~} x \neq y.
\]
Moreover, for a pinning $\t \in \Pinning$, $J^\t$ denotes the influence matrix with respect to the conditional measure $\mu^\t$.
\end{definition}

Note that \cite{ALO20} defined the influence matrix only for $q=2$ in a slightly different form and the definition was later generalized to all $q\ge 2$ by two independent works \cite{CGSV21,FGYZ21} in different ways. 
In this paper we use the definition from \cite{CGSV21} which is more suitable for our applications in Section~\ref{sec:contraction-SI} for establishing spectral independence, though the definition from \cite{FGYZ21} could also work with some efforts. 
Since $J$ is self-adjoint the eigenvalues of $J$ are real; see Eq.~\eqref{self-adjoint} below for more details.
Let $\lambda_1(J)\geq 0$ denote its largest eigenvalue (the eigenvalue zero always exists since all row sums of $J$ vanish).

\begin{definition}[Spectral independence]
We say that a spin system is {\em $\eta$-spectrally independent} if for all pinnings $\t \in \Pinning$ we have
$\lambda_1(J^\t)  \leq \eta.$
\end{definition}


There is one additional property of the Gibbs distribution
that will be relevant to us; namely,
that the marginal probability for any vertex is lower bounded by a constant $b$.
This property is typically trivial to satisfy for some constant $b=b(\Delta)>0$. We write $\Omega^\tau_x$ for the set of spin values that are allowed at $x$ in the presence of the pinning $\t$.

\begin{definition}[Marginal boundedness]\label{def:margb}
We say that the spin system is {\em $b$-marginally bounded} if for all pinnings $\t$, all $\vx\in V$, all $\spa\in\Omega^\tau_x$ we have
$\mu^\tau(\sigma_\vx =\spa)\geq b$.
\end{definition}

\subsection{Consequences of spectral independence}
\label{sec:si-consequences}

The spectral independence approach has been quite powerful as it
led to rapid mixing results for the hard-core model in the tree uniqueness region~\cite{ALO20},
for any 2-spin antiferromagnetic spin system in the tree uniqueness region~\cite{CLV1},
and for colorings~\cite{CGSV21,FGYZ21} it matched the best known parameter bounds using other algorithmic approaches.
Moreover, recent work of Chen et al.~\cite{CLV20} shows that spectral independence implies optimal mixing of the Glauber dynamics
in all of these cases as stated in the following theorem.

\begin{theorem}[\cite{CLV20}]
\label{thm:main}
For an arbitrary spin system on a graph of maximum degree $\Delta$, if the system is $\eta$-spectrally independent and $b$-marginally bounded,
then there exists a constant $C=C(b,\eta,\Delta)>0$ such that the mixing time of the Glauber dynamics
for the spin system
is at most $Cn\log{n}$ where $n$ is the number of vertices,
and the modified log-Sobolev constant of the Glauber dynamics is at least $1/(Cn)$.
Moreover, the constant $C$ satisfies $C=\left(\frac{\Delta}{b}\right)^{O(1+\frac{\eta}{b})}$.
\end{theorem}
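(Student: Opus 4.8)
I would split the statement into the analytic core---the modified log-Sobolev inequality (MLSI) with constant $\Omega(1/n)$---and the mixing-time bound, which I would derive from the MLSI by a soft argument. For the latter: an MLSI with constant $\rho\ge 1/(Cn)$ gives exponential decay of relative entropy along the Glauber semigroup, $\Ent_\mu(\nu_t/\mu)\le e^{-\rho t}\,\Ent_\mu(\nu_0/\mu)$ where $\nu_t:=\nu_0P^t$. Since $b$-marginal boundedness, applied to the pinning that fixes the first $i-1$ coordinates of any fixed vertex ordering, gives $\mu(\sigma)\ge b^{\,n}$ for every $\sigma\in\Omega$, the initial relative entropy from a point mass is at most $n\log(1/b)$; Pinsker then yields $\tv{\nu_t}{\mu}^2\le\tfrac12 e^{-\rho t}\,n\log(1/b)$, so $\tmix=O(\rho^{-1}\log n)=O(Cn\log n)$. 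It therefore remains to prove the MLSI lower bound $\rho\ge 1/(Cn)$ (and the analogous log-Sobolev bound).

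By the standard identity for the Dirichlet form of a heat-bath chain one has $\mathcal E_{\mathrm{Glauber}}(f,\log f)\ge\tfrac1n\sum_{x\in V}\mu[\Ent_x f]$, where $\Ent_x f$ denotes the entropy of $f$ under the single-site conditional $\mu(\cdot\tc\sigma_{V\setminus x})$; hence the MLSI constant is at least $1/(Cn)$ as soon as one has the \emph{approximate tensorization of entropy}
$$\Ent_\mu(f)\ \le\ C\sum_{x\in V}\mu\big[\Ent_x f\big]\qquad\text{for all }f:\Omega\to\R_{\ge 0},$$
with $C=(\Delta/b)^{O(1+\eta/b)}$. So the whole problem reduces to proving this inequality from $\eta$-spectral independence and $b$-marginal boundedness.

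For that I would use the high-dimensional expander / down-up walk framework. Identify a configuration with the $n$-element transversal $\{(x,\sigma_x):x\in V\}\subset\VSpairs$, so $\mu$ lives on the top level of a weighted simplicial complex whose $k$-faces are the feasible size-$k$ partial transversals; let $\mu_k$ be its down-projection to $k$-faces. In this language, $\eta$-spectral independence is exactly the assertion that at \emph{every} link---i.e.\ for every pinning $\t\in\Pinning$---the local one-skeleton walk has its nontrivial eigenvalues bounded, of order $\eta$ divided by the number of unpinned vertices. The engine of the proof is an \emph{entropic} local-to-global theorem: by downward induction on the level $k$ one establishes a one-step comparison, at each link, of the form $\Ent_{\mu_k^\t}(f)\le(1+\varepsilon_k)\cdot\frac{n-k}{\,n-k-1\,}\;\E_{\mu_k^\t}\big[\Ent_{\mu_{k-1}^{(\t,\cdot)}}(f)\big]$, where the error $\varepsilon_k$ arises from bounding a ``local'' residual term by the \emph{variance}-type quantity that the link's spectral bound controls, and then converting that variance estimate into an entropy estimate. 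It is precisely in this variance-to-entropy conversion that $b$-marginal boundedness enters, via an elementary comparison between local entropy and local variance valid on a space whose minimum probability is at least $b$. Telescoping from $k=n$ down to $k=1$, the factors $\frac{n-k}{n-k-1}$ multiply to the overall $\Theta(n)$, while $\prod_k(1+\varepsilon_k)$ converges (the $\varepsilon_k$ being summable in $k$), and a careful bookkeeping of constants produces the stated $C=(\Delta/b)^{O(1+\eta/b)}$.

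The main obstacle is the one-step entropic comparison at a link: the variance-to-entropy conversion has no counterpart in the purely spectral (Poincar\'e) local-to-global theory of Oppenheim and Alev--Lau, it is the genuinely new analytic content, and it is where the unavoidable dependence on $b$ is paid. Arranging this conversion so that $\varepsilon_k$ decays fast enough in $k$ for the product to converge---and tracking the constants through the telescoping so as to land on $(\Delta/b)^{O(1+\eta/b)}$ rather than something exponentially worse---is where essentially all of the work lies. The ordinary log-Sobolev bound $\Omega(1/n)$ then follows either by rerunning the factorization argument with the log-Sobolev functional in place of the entropy functional, or, given $b$-marginal boundedness, from the standard comparison between the modified and ordinary log-Sobolev constants for single-site dynamics on a space with bounded marginals.
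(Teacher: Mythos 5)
Your high-level skeleton is the right one and matches both \cite{CLV20} and the paper's own reformulation in Section~\ref{sec:spectral-ind}: reduce to approximate tensorization \eqref{eq:approx-tensor} (your Dirichlet-form comparison $\cD(f,\log f)\ge \tfrac1n\sum_x\mu[\Ent_x f]$ is correct, and your MLSI$\Rightarrow$mixing step via $\mu_*\ge b^n$ is exactly Lemma~\ref{lem:mix}), and prove tensorization by a level-by-level entropic local-to-global recursion in which spectral independence controls a local two-point quantity and $b$-marginal boundedness converts variance to entropy (this is precisely the content of \eqref{eq:locfact}, Lemma~\ref{localtoglobal} and Lemma~\ref{lem:alphak}, where Pinsker gives $\var_{\mu^\t}(\mu^{\t,x}f)\le \tfrac2b\Ent_{\mu^\t}(\mu^{\t,x}f)$). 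However, there is a genuine gap in your final telescoping step: the per-level errors are \emph{not} summable. Spectral independence yields $\a_k\ge 1-\frac{2\eta}{b(n-k-1)}$, i.e.\ your $\eps_k$ is of order $\eta/(b(n-k))$, a harmonic sequence; $\sum_k\eps_k=\Theta\big(\tfrac{\eta}{b}\log n\big)$ and $\prod_k(1+\eps_k)=n^{\Theta(\eta/b)}$. Running the recursion all the way down to single sites therefore gives approximate tensorization only with constant $n^{O(\eta/b)}$, hence mixing time $n^{1+O(\eta/b)}\log n$ --- essentially the older polynomial bound of \cite{ALO20} --- and no amount of ``careful bookkeeping'' can turn this into a constant $C=(\Delta/b)^{O(1+\eta/b)}$ independent of $n$.

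What is missing is the two-stage structure of the actual proof. One stops the recursion at level $\ell=\lceil\theta n\rceil$, where the accumulated correction factor is only $(1/\theta)^{O(\eta/b)}$; this is exactly $\ell$-UBF, inequality \eqref{eq:ubfsub121} of Theorem~\ref{th:reform}. Passing from blocks of size $\theta n$ to single sites then requires a genuinely different argument, not a continuation of the telescoping: one chooses $\theta$ small enough (in terms of $b$ and $\Delta$) that a uniformly random $\theta n$-subset of a maximum-degree-$\Delta$ graph has only constant-size connected components with a suitable exponential tail, and factorizes entropy inside each small component by a crude bound costing $(1/b)^{O(|\text{component}|)}$; this is the mechanism of Lemma~\ref{lem:ubf:boosting} here (and Lemmas 4.2--4.3 of \cite{CLV20}), and it is where the $\Delta$-dependence and most of the $b$-dependence of the final constant are actually paid. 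Your proposal silently assumes this step away by asserting summability of $\eps_k$; without it the claimed constant, and hence the $O(n\log n)$ mixing bound, does not follow.
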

The key step in the proof of Theorem \ref{thm:main} is the implication
\begin{equation}\label{eq:imply1}
{\rm Spectral\; Independence}
\;\;\Longrightarrow\;\; {\rm Approximate \; Tensorization \;of\;  Entropy.}
\end{equation}
Approximate tensorization of entropy says that there exists a constant $C\geq 1$, such that
for any function $f:\Omega\rightarrow\R_{+}$,
\begin{equation}\label{eq:approx-tensor}
\Ent(f) \leq C\sum_{\vx\in V} \mu[\Ent_\vx(f) ],
\end{equation}
where
$\mu[f] = \sum_{\sigma \in \Omega} \mu(\sigma) f(\sigma)$ and
$\Ent(f)=\mu[f\log(f/\mu [f])]$ denote the mean and entropy of $f$ with respect to the measure $\mu$. In particular, $\Ent(f)$ is the relative entropy of the probability measure $f\mu/\mu[f]$ with respect to $\mu$, while $\mu[\Ent_\vx f ] = \mu[f\log(f/\mu_\vx [f])]$ is the expected value according to $\mu$ of the conditional entropy $\t\mapsto \Ent(f | \t)$ for $\tau$ a spin configuration on $V\setminus\{\vx\}$. To make some intuitive sense of approximate tensorization, notice that
if $\mu$ is a product distribution over $V$ then~\eqref{eq:approx-tensor} holds with $C=1$. In general,
approximate tensorization is easily seen to imply the desired bounds on the modified log-Sobolev  constant and
the mixing time of the Glauber dynamics; see e.g.\ \cite{CMT14}. In the setting of spin systems on the lattice $\bbZ^d$, approximate tensorization estimates are known to hold under the so-called strong spatial mixing condition; this follows from the logarithmic Sobolev inequalities established in \cite{SZ92a,MO94,C01}.

We present an alternative proof of some of the key steps for the implication \eqref{eq:imply1} in Section~\ref{sec:spectral-ind}; see Theorem \ref{th:reform}.
The analogous result in~\cite{CLV20} is proved in the more general framework of simplicial complexes and generalizes the result of \cite{CGM19} for homogeneous strongly log-concave distributions; see also \cite{HS19} for related results.
Our proof is completely framed in the setting of spin systems and is devoid of any work on simplicial complexes.
This new approach may be conceptually simpler to some readers, and it enables us to present a self-contained proof of our main results.  As a byproduct we also obtain an incremental
improvement in the resulting mixing time bound improving the exponent in the constant $C$ from
$O(1+ \eta/b^2)$ (see Theorem 1.9 in~\cite{CLV20}) to $O(1+ \eta/b)$ as stated in Theorem~\ref{thm:main}.

One of our  main results in this paper is the following substantial extension of 
\eqref{eq:imply1}:
\begin{equation}\label{eq:imply2}
{\rm Spectral\; Independence}
\;\;\Longrightarrow\;\; {\rm General \; Block \;Factorization  \;of\;  Entropy.}
\end{equation}  Caputo and Parisi~\cite{CP20} introduced the notion of {\em general block factorization} of entropy which generalizes
approximate tensorization, and is useful for analyzing more general classes of Markov chains.
Let $\alpha=(\alpha_B)_{B\subset V}$ be an arbitrary probability distribution over subsets of $V$, and set $\delta(\alpha) = \min_{\vx\in V} \sum_{B:B\ni \vx} \alpha_B$ as in \eqref{eqn:delta}.
General block factorization
of entropy holds with constant $C$ if for all weights $\alpha$, for all $f:\Omega\rightarrow\R_{+}$:
\begin{equation}
\label{eqn:block-factorization}
  \delta(\alpha)\Ent f \leq C\sum_{B\subset V}\alpha_B\,\mu[\Ent_B{f}],
\end{equation}
where $\mu[\Ent_B f ] = \mu[f\log(f/\mu_B f)]$ is the expected value  of the conditional entropy $\t\mapsto \Ent(f | \t)$ for $\tau$ a spin configuration on $V\setminus B$. 
Entropy tensorization~\eqref{eq:approx-tensor} is the special case when
$\alpha_B = 1/n$ for every block of size $1$ and $\alpha_B = 0$ for larger blocks. The choice of the constant $\delta(\alpha)$ in this inequality is motivated by the fact that when $\mu$ is a product measure then \eqref{eqn:block-factorization} holds with $C=1$, in which case it is known as the Shearer inequality; see \cite{CMT14}. The block factorization of entropy is a  statement concerning the equilibrium distribution $\mu$ which has deep consequences for several natural sampling algorithms.
In particular, it implies optimal mixing and optimal entropy decay for arbitrary block dynamics and constitutes a key concept in the proof of  Theorem \ref{thm:colorings} and Theorem \ref{thm:Ising-Potts-mixing}. The precise formulation of \eqref{eq:imply2} and its corollaries is as follows.

\begin{theorem}\label{thm:block-factorization}
For an arbitrary
 spin system on a graph of maximum degree $\Delta$, if the system is $\eta$-spectrally independent and $b$-marginally bounded,
then general block factorization of entropy  \eqref{eqn:block-factorization} holds with constant $C=C(b,\eta,\Delta)$.
Moreover, all heat-bath block dynamics have optimal mixing and optimal entropy decay. The constant $C$ satisfies $C=\left(\frac{2}{b}\right)^{O\(\Delta(1+\frac{\eta}{b})\)}$.
\end{theorem}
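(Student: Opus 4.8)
The plan is to prove the static inequality \eqref{eqn:block-factorization} and then deduce the statements about the dynamics from it: general block factorization with constant $C$ gives, for the $\alpha$-weighted heat-bath block dynamics, modified log-Sobolev constant at least $\delta(\alpha)/C$ and hence mixing time $O(\delta(\alpha)^{-1}\log n)$, by the same passage from an entropy factorization inequality to optimal mixing that is used for the Glauber dynamics (see \cite{CMT14}). For \eqref{eqn:block-factorization} itself the starting point is Theorem~\ref{th:reform}: $\eta$-spectral independence and $b$-marginal boundedness yield approximate tensorization \eqref{eq:approx-tensor} with a constant $C_1 = C_1(b,\eta,\Delta)$, and since both hypotheses are preserved under conditioning on a pinning, the same holds with the same $C_1$ for every $\mu^\tau$, $\tau\in\Pinning$. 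We will only use approximate tensorization of $\mu$ itself in the following convenient form: if $g\colon\Omega\to\R_+$ depends only on the spins outside a set $A\subseteq V$, then $\Ent_z(g)=0$ for every $z\in A$, so $\Ent(g)\le C_1\sum_{z\notin A}\mu[\Ent_z g]$.

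I would then reduce \eqref{eqn:block-factorization} to its uniform-block case. For $0\le\ell\le n$ let $\alpha_\ell$ be the uniform distribution on the $\ell$-subsets of $V$ (so $\delta(\alpha_\ell)=\ell/n$) and set $E_\ell := \E_{B\sim\alpha_\ell}\mu[\Ent_B f]$, so that $E_0=0$ and $E_n=\Ent f$. The chain rule $\mu[\Ent_B f]=\mu[\Ent_A f]+\mu[\Ent_B(\mu_A f)]$ for $A\subseteq B$, applied with $B$ a uniform $(k{+}1)$-set and $A=B\setminus\{z\}$ for a uniform $z\in B$, telescopes to
\[
E_\ell \;=\; \sum_{k=0}^{\ell-1} G_k, \qquad G_k \;:=\; \E_{A\sim\alpha_k}\ \E_{z\in V\setminus A}\ \mu\big[\Ent_{A\cup\{z\}}(\mu_A f)\big].
\]
Consequently the \emph{uniform block factorization} $\frac{\ell}{n}\Ent f\le C_2\,E_\ell$ for all $\ell$, with $C_2$ independent of $n$ and $\ell$, is equivalent to the single increment estimate
\begin{equation}\label{plan:key}
G_k \;\ge\; \frac{1}{C_2\,n}\,\Ent f \qquad\text{for all } 0\le k\le n-1 .
\end{equation}
From uniform block factorization one recovers \eqref{eqn:block-factorization} for an arbitrary weighting $\alpha$ by a combinatorial reduction following \cite{CP20}, based on the monotonicity $\mu[\Ent_A f]\le\mu[\Ent_B f]$ for $A\subseteq B$ together with the averaging bound $\E_{B\sim\alpha}|B|\ge\delta(\alpha)\,n$, which allows $\sum_B\alpha_B\mu[\Ent_B f]$ to be bounded below by a uniform-block quantity at scale $\ell\asymp\delta(\alpha)\,n$.

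The crux is \eqref{plan:key}, and this is where spectral independence is used in a form genuinely stronger than approximate tensorization. For $k$ bounded away from $n$ it is easy: by the chain rule $G_k\ge\E_{A\sim\alpha_k}\E_{z\notin A}\mu[\Ent_z(\mu_A f)]$, and since $\mu_A f$ depends only on the spins outside $A$, Step~1 gives $\E_{z\notin A}\mu[\Ent_z(\mu_A f)]\ge\frac{1}{(n-k)C_1}\Ent(\mu_A f)$, which is $\gtrsim\Ent(f)/(C_1 n)$ once $\E_{A\sim\alpha_k}\Ent(\mu_A f)$ is a fixed fraction of $\Ent f$. The hard regime is $k$ close to $n$: there $\mu_A f$ has dissipated almost all of the relative entropy, the above bound degrades by a factor that grows with $n$, and one must instead show directly that the increments $G_k=E_{k+1}-E_k$ never drop much below their mean $\Ent(f)/n$, even near $\ell=n$ — equivalently, that averaging out a uniformly random set of $\Theta(n)$ coordinates contracts $\Ent f$ by a constant factor. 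The plan is to establish this by propagating \eqref{plan:key} across consecutive scales, comparing $G_k$ with $G_{k+1}$ via a covariance/entropy estimate that controls the defect created when one additional coordinate is averaged out; this is precisely an entropy-contraction statement, uniform in the number of already-averaged coordinates, that the $\eta$-spectral independence and $b$-marginal boundedness of $\mu$ and of its conditional measures are designed to supply. I expect this induction to be the main obstacle, and it is also where the constant deteriorates: at one step the resampling of a block is dominated by the resampling of the neighborhoods $N[x]$ of its vertices, at a cost equal to the inverse minimal probability of a configuration on a set of size at most $\Delta+1$, which turns the $(\Delta/b)^{O(1+\eta/b)}$ of Theorem~\ref{thm:main} into the claimed $C=(2/b)^{O(\Delta(1+\eta/b))}$. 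Once \eqref{plan:key}, hence uniform block factorization, hence \eqref{eqn:block-factorization}, is in hand, the assertions on the mixing time and modified log-Sobolev constant of all heat-bath block dynamics follow as in the first paragraph.
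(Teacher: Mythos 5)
Your overall architecture (prove the static inequality \eqref{eqn:block-factorization}, then get optimal mixing and entropy decay via entropy contraction, as in Lemma~\ref{lem:gbf}) matches the paper, but the two steps that carry the actual content have genuine gaps. First, your key increment estimate $G_k \ge \tfrac{1}{C_2 n}\Ent f$ for \emph{all} $k$ is false, and the claimed equivalence with uniform block factorization is also false: UBF only requires the \emph{sum} of increments to be large, and individual increments can vanish even for product measures. Take $n=2$, $\mu$ uniform on $\{0,1\}^2$ and $f = 2\cdot\mathbf{1}\{\sigma_1=\sigma_2\}$; then $\mu_{\{1\}}f \equiv 1$, so $G_1 = \E_{|A|=1}\,\Ent(\mu_A f) = 0$ while $\Ent f = \log 2$. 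This measure is spectrally independent and marginally bounded, so no induction ``propagating the bound across consecutive scales'' can recover your estimate near $k=n$; this is precisely why the paper (Theorem~\ref{th:reform}, via the recursion of Lemma~\ref{localtoglobal} with the local coefficients of Lemma~\ref{lem:alphak}) only establishes $\ell$-UBF at a single linear scale $\ell=\lceil\theta n\rceil$, with a constant $(1/\theta)^{O(\eta/b)}$ that degrades as $\theta\to 0$, rather than a per-increment bound.

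Second, the passage from UBF to general block factorization cannot be done by ``monotonicity plus averaging.'' Monotonicity $\mu[\Ent_A f]\le\mu[\Ent_B f]$ only helps when the random uniform set is \emph{contained} in a block of $\alpha$; for the even--odd dynamics ($\alpha$ uniform on $\{V_e,V_o\}$, $\delta(\alpha)=1/2$) a uniform set of size $n/2$ meets both blocks, and there is no elementary way to lower bound $\mu[\Ent_{V_e}f]+\mu[\Ent_{V_o}f]$ by $\mu[\Ent_S f]$ — splitting $\Ent_S$ over $S\cap V_e$ and $S\cap V_o$ is itself a factorization statement of the kind being proved. This reduction is the heart of the paper's Theorem~\ref{thm:UBF-GBF}: one first boosts $\lceil\theta n\rceil$-UBF (with $\theta\le b^{2(\Delta+1)}/(4e\Delta^2)$) to $k$-partite factorization over the independent sets $V_1,\dots,V_k$ of a proper $k$-coloring of $G$ (Lemma~\ref{lem:ubf:boosting}), exploiting the product structure of $\mu$ restricted to an independent set together with a percolation-type bound on the connected components of the random set (Lemmas 4.2--4.3 of \cite{CLV20}), and only then deduces GBF for arbitrary $\alpha$ via Shearer's inequality for the conditional product measures (Lemmas~\ref{lem:shearer} and \ref{lem:kpart:gen}). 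That machinery — absent from your sketch — is also where the constant $\left(\frac{2}{b}\right)^{O(\Delta(1+\eta/b))}$ actually comes from; attributing the reduction to \cite{CP20} as a routine combinatorial fact is not accurate, since \cite{CP20} supplies the definition and consequences of GBF, not the implication from uniform blocks.
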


Recall, for the Glauber dynamics $\delta(\alpha)=1/n$ and hence we recover Theorem~\ref{thm:main} as a special case of
 the above result.  As another example, for a bipartite graph Theorem~\ref{thm:block-factorization}
 implies $O(\log{n})$ mixing time of the even-odd dynamics. 

When the spin system satisfies \eqref{eqn:block-factorization} with $\alpha$ the uniform distribution over all subsets of a given size $\ell$ we refer to this as $\ell$-\emph{uniform block factorization of entropy} or $\ell$-UBF for short.   In~\cite{CLV20}, an important step in the proof of Theorem~\ref{thm:main} is establishing $\ell$-UBF with $\ell\sim \theta n$ for some $\theta\in(0,1)$.  To prove Theorem~\ref{thm:block-factorization} for arbitrary blocks we establish that $\ell$-UBF implies general block factorization of entropy, see Theorem~\ref{thm:UBF-GBF} for a detailed statement and Figure~\ref{fig:map} for a high-level overview.

Recent work of Blanca et al.~\cite{BCPSV20} utilizes block factorization of entropy into the even and odd sublattices of $\mathbb Z^d$
to
obtain tight mixing time bounds for the Swendsen-Wang dynamics on boxes of $\integers^d$ in the high-temperature region.
Following the approach presented in~\cite{BCPSV20} and using our general result in Theorem \ref{thm:block-factorization}, here we prove optimal mixing time of the Swendsen-Wang dynamics
when spectral independence holds on arbitrary bounded-degree graphs. This can be formalized in the following statement, which is a key ingredient in the proof of Theorem \ref{thm:Ising-Potts-mixing}.
\begin{theorem}
	\label{thm:sw:general}
For the ferromagnetic Ising and Potts models on a graph of maximum degree $\Delta$, if the system is $\eta$-spectrally independent and $b$-marginally bounded,
then there exists a constant $C=C(b,\eta,\Delta)$ such that
the mixing time of the Swendsen-Wang dynamics is at most $ C\log{n}$ and the modified log-Sobolev constant
is at least $C^{-1}$. The constant $C$ satisfies $C=\left(\frac{2}{b}\right)^{O\(\Delta(1+\frac{\eta}{b})\)}$.
\end{theorem}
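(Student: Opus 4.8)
The plan is to pass to the Edwards--Sokal joint representation and reduce the statement to a two-block factorization of entropy, which is then supplied by Theorem~\ref{thm:block-factorization}; this is the strategy of~\cite{BCPSV20}, adapted from the even/odd blocks of $\bbZ^d$ to the general block dynamics of $\mu$.

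\textbf{Step 1 (joint representation and reduction).} Let $\nu$ denote the Edwards--Sokal coupling on the joint space $\Joint=\Omega\times\{0,1\}^E$: its spin-marginal is the ferromagnetic Ising/Potts measure $\mu$, its edge-marginal is the associated random-cluster measure, and $\nu(\sigma\mid A)$ is uniform over the colorings that are constant on each connected component of $(V,A)$. A step of $\PSW$ is precisely the spin-coordinate marginal of the two-block heat-bath chain on $\nu$ that resamples all edge variables given the spins and then all spin variables given the edges (equivalently, of its random-scan version choosing the block $V$ or $E$ with probability $\tfrac12$ each). For $f\colon\Omega\to\R_{+}$ and its lift $F(\sigma,A)=f(\sigma)$, applying the joint kernel to $F$ returns the lift of $\PSW f$; hence $\Ent_\nu(F)=\Ent_\mu(f)$ is mapped to $\Ent_\mu(\PSW f)$, and relative-entropy contraction and the modified log-Sobolev constant transfer from the joint chain to $\PSW$. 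It therefore suffices to prove that the joint two-block chain contracts relative entropy by a constant factor per step, with a constant depending only on $b,\eta,\Delta$.

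\textbf{Step 2 (two-block factorization for $\nu$).} The target is
\[
\Ent_\nu(g) \le C\,\big(\nu[\Ent_{E}\,g]+\nu[\Ent_{V}\,g]\big)\qquad\text{for all }g\colon\Joint\to\R_{+},
\]
where $\Ent_{E}$ (resp.\ $\Ent_{V}$) is the conditional entropy of resampling all edges given the spins (resp.\ all spins given the edges). By the entropy chain rule, conditioning on the spin coordinate gives $\Ent_\nu(g)=\nu[\Ent_{E}\,g]+\Ent_\mu(\bar g)$ with $\bar g(\sigma)=\nu[g\mid\sigma]$, so it remains to bound $\Ent_\mu(\bar g)$ by a constant times $\nu[\Ent_{V}\,g]$. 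The quantity $\nu[\Ent_{V}\,g]$ is the entropy released by the \emph{free} recoloring of the spins over the random partition $\cP(A)$ of $V$ into random-cluster components. I would bound it below by comparing this free, random-partition update with the heat-bath block dynamics of $\mu$: for the weights $\alpha$ equal to the law of a typical random-cluster component, marginal boundedness controls the Radon--Nikodym factor between the free per-component recoloring and the Gibbs conditional on that block, and a lower bound on the probability that a fixed vertex lies in some component gives $\delta(\alpha)=\Omega(1)$. Plugging this into general block factorization for $\mu$ (Theorem~\ref{thm:block-factorization}) yields $\Ent_\mu(\bar g)\le C'\,\nu[\Ent_{V}\,g]$ with $C'=\left(\tfrac2b\right)^{O(\Delta(1+\eta/b))}$.

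\textbf{Step 3 (conclusion) and the main obstacle.} A two-block factorization with constant $C$ implies a constant-factor relative-entropy contraction for the joint two-block chain, hence, by Step~1, for $\PSW$; together with $\Ent_\mu(f_0)\le\log(1/\mu_{\min})=O(n)$ (since $\mu_{\min}\ge q^{-n}$) this yields mixing time $O(\log n)$ and modified log-Sobolev constant $\Omega(1)$, with $C=\left(\tfrac2b\right)^{O(\Delta(1+\eta/b))}$ inherited from Theorem~\ref{thm:block-factorization}. The crux is Step~2: the partition entering $\nu[\Ent_{V}\,g]$ is random and correlated with the current configuration, and the conditional laws native to $\PSW$ are the \emph{free} per-component laws (uniform, independent across components) rather than the Gibbs-conditional block laws appearing in \eqref{eqn:block-factorization}; bridging this discrepancy uniformly in $n$ and for \emph{all} test functions $g$ on $\Joint$, not merely for lifts of spin functions, is where the ferromagnetic structure of the Edwards--Sokal coupling and marginal boundedness must both be exploited (for $q\ge 3$ the ferromagnetic assumption $\beta>0$ is also what makes the per-component spin law exactly uniform).
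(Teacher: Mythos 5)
Your Step 1 and Step 3 match the paper's high-level route: pass to the Edwards--Sokal measure $\nu$, reduce optimal SW mixing to a spin/edge (two-block) factorization of entropy on $\Joint$, and invoke \cite[Lemma 1.8]{BCPSV20} plus Lemma~\ref{lem:mix} to get the $O(\log n)$ mixing time and $\Omega(1)$ modified log-Sobolev constant. But the entire content of the theorem is Step 2, and there you have a genuine gap, which you yourself flag as ``the main obstacle'' without closing it. The route you sketch---lower-bounding $\nu[\Ent_V g]$ by a heat-bath block-dynamics Dirichlet-type quantity for $\mu$ with weights $\alpha$ ``equal to the law of a typical random-cluster component'' and then applying Theorem~\ref{thm:block-factorization}---does not work as stated, for three concrete reasons. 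First, the weights $\alpha_B$ in general block factorization \eqref{eqn:block-factorization} must be a fixed distribution over subsets of $V$, whereas the partition appearing in $\nu[\Ent_V g]$ is the random component structure of $(V,A)$, which is correlated with the very configuration at which $g$ is evaluated (since $A\subset M(\sigma)$); averaging it out into a fixed $\alpha$ loses exactly the coupling you need. Second, the spin update of SW resamples each component from the \emph{free} uniform law, not from the Gibbs conditional $\mu_B$ given the boundary spins, and the Radon--Nikodym factor between these two laws is of order $e^{\Theta(\beta\,|\partial B|)}$, which is not controlled by $b$-marginal boundedness uniformly in the component size; random-cluster components have no a priori size bound, so ``marginal boundedness controls the Radon--Nikodym factor'' fails precisely on the large components. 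Third, even granting an inequality of the form $\Ent_\mu(h)\le C'\nu[\Ent_V H]$ for lifts $H$ of spin functions $h$, your chain-rule reduction needs it for $\bar g=\nu[g\mid\sigma]$ while the right-hand side involves $g$ itself, and you give no argument comparing $\nu[\Ent_V \bar G]$ with $\nu[\Ent_V g]$ for general functions on $\Joint$.

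The paper circumvents all of this by never comparing the SW spin update to block dynamics over random-cluster blocks. Instead it first derives from spectral independence the $k$-partite factorization \eqref{eq:kpart} over the \emph{deterministic} color classes $V_1,\dots,V_k$ ($k\le\Delta+1$) of a proper coloring of $G$ (Theorem~\ref{th:reform} plus Lemma~\ref{lem:ubf:boosting}), and then proves Theorem~\ref{th:main-intro2} via three steps: monotonicity $\nu[\Ent_\nu(f\tc A)]\ge\nu[\Ent_\nu(f\tc\overline{\sigma_{V_j}},A)]$ (Lemma~\ref{lem:conv}); a tensorized single-vertex spin/edge factorization, exploiting that conditionally on $\overline{\sigma_{V_j}}$ the measure $\nu$ is a product over the vertices of the independent set $V_j$, which yields Lemma~\ref{lem:tensor} with constant $1/\delta_1=O(\beta\Delta e^{\beta\Delta})$ (this is where $e^{\beta\Delta}=O(1/b)$ enters, replacing your unbounded Radon--Nikodym factor by a per-vertex quantity); and finally Lemma~\ref{lem:conc}, which converts the $k$-partite factorization of $\mu$ into the bound $\sum_j\nu[\Ent_\nu(f\tc\overline{\sigma_{V_j}})]\ge \Ent_\nu(f)/C_{\rm par}$. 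If you want to salvage your approach you would essentially have to reprove an analogue of these lemmas, i.e.\ replace the random, configuration-dependent component blocks by fixed independent sets and work with per-vertex conditional laws; as written, Step 2 is an unproven claim rather than a proof.
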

We turn to a further interesting consequence of spectral independence:
\begin{equation}\label{eq:imply3}
{\rm Spectral\; Independence}
\;\;\Longrightarrow\;\; {\rm Approximate \; Subadditivity  \;of\;  Entropy.}
\end{equation}
We say that the approximate subadditivity of entropy holds with constant $C$ if
\begin{equation}
\label{eqn:sub-add}
 \sum_{x\in V} \Ent (f_x)
 \leq C\Ent (f),
\end{equation}
where, for any nonnegative function $f$, the functions $f_x$ are defined by $f_x(a) =\mu(f\tc\si_x=a)$. Notice that when $\mu(f)=1$ then $\nu=f\mu$ is a probability measure and, if $\mu_x$ denotes the marginal of $\mu$ on $x$, then $f_x\mu_x$ gives the marginal of $\nu$ on $x$.
The inequality \eqref{eqn:sub-add} is known to be equivalent to a Brascamp-Lieb type inequality  for the measure $\mu$ \cite{CLL,CCE}. In particular, it implies that for any collection of functions $\varphi_\vx:[q]\mapsto \bbR$, $\vx\in V$, one has
\begin{equation}
\label{eqn:sub-add2}
 \mu\left(\prod_{\vx\in V}\varphi_\vx(\si_\vx)\right) \leq
 \prod_{\vx\in V}\mu\left( |\varphi_\vx(\si_\vx)|^C\right)^{1/{C}},
\end{equation}
where $C$ is the same constant as above.
For a general discussion of subadditivity of entropy, Brascamp-Lieb type inequalities, and their applications, see for instance \cite{BCELM}
and the references therein. In Theorem \ref{th:reform} 
below we shall see that for
an arbitrary spin system on a graph of maximum degree $\Delta$, if the system is $\eta$-spectrally independent and $b$-marginally bounded,
then \eqref{eqn:sub-add} holds with $C=O(1+ \eta/b)$. The question of the validity of such inequalities in the context of high temperature spin systems was raised in \cite{CMT14} but as far as we know
there are no prior results in this direction.

\subsection{Establishing spectral independence}

The above results show the power of spectral independence as it implies optimal mixing time bounds for a wide
variety of Markov chains.   We next address when spectral independence holds and how it relates to classical
conditions that imply fast mixing.
The next series of results prove in a general context that when there exists a contractive coupling then spectral independence holds.

Let $d$ denote an arbitrary metric on $\Omega$.
A simple example is
the \emph{Hamming metric}, which for configurations $\sigma,\tau \in \Omega$ is defined to be $\hamming{\sigma}{\tau} = | \{x \in V: \sigma_x \neq \tau_x\} |$.
There are two types of more general metrics that we will consider: those within a constant factor of the Hamming metric and
vertex-weighted Hamming metric for arbitrary weights.
For $\gamma \ge 1$, a metric $d$ on $\Omega$ is said to be $\gamma$-equivalent to the Hamming metric (or $\gamma$-equivalent for simplicity)
if for all $\sigma,\tau \in \Omega$,
\[
\frac{1}{\gamma} \hamming{\sigma}{\tau} \le d(\sigma, \tau) \le \gamma \hamming{\sigma}{\tau};
\]
that is, a $\gamma$-equivalent metric is an arbitrary metric where every distance is within a factor $\gamma$ of the Hamming distance.
In contrast, we can generalize the Hamming distance by considering arbitrary weights for the vertices.
Let $w: V \to \R_+$ be an arbitrary positive weight function.
The \emph{$w$-weighted Hamming metric} between two configurations $\sigma,\tau \in \Omega$ is defined to be
\[
d_w(\sigma, \tau) = \sum_{x \in V} w(\vx) \mathbf{1}\{\si_\vx \neq \tau_\vx\}.
\]
In particular, if $w_\vx = 1$ for all $\vx$ then $d_w$ is just the usual Hamming metric.  Note there are no constraints on the weights
except that they are positive; in particular, the weights can be a function of $n$.

We will often consider a class $\mathcal{P} = \{P^\tau: \tau \in \Pinning\}$ of Markov chains associated with $\mu$, where each $P^\tau$ is a Markov chain with stationary distribution $\mu^\tau$ and $\tau \in \Pinning$ is a pinning; for example, $\mathcal{P}$ can be the family of Glauber dynamics for all $\mu^\tau$'s.
In coupling proofs, the goal is to design a coupling so that for an arbitrary pair of states the chains contract with respect to
some distance metric after the coupled transition.
Roughly speaking, for $\ctrct\in(0,1)$,
we say that $\mu$ is $\ctrct$-contractive with respect to (w.r.t.) a collection $\mathcal{P}$ of Markov chains and a metric $d$ if one step of every chain $P^\tau$ contracts the distance by a factor $\ctrct$ in expectation.
This is formalized in the following definition.

\begin{definition}[$\ctrct$-Contraction]
Let $\mathcal{P}$ denote a collection of Markov chains associated with $\mu$ and let $d$ be a metric on $\Omega$.
For $\ctrct\in(0,1)$ we say that $\mu$ is $\ctrct$-contractive w.r.t.\ $\mathcal{P}$ and $d$ 
if for all $\t \in \Pinning$, all $X_0,Y_0 \in \Omega^\tau$, there exists a coupling $(X_0,Y_0)\rightarrow (X_1,Y_1)$ for $P^\tau$ such that:
$$ \bbE[d(X_1,Y_1)|X_0,Y_0] \leq \ctrct d(X_0,Y_0).$$
\end{definition}

The following result shows that spectral independence holds if the Glauber dynamics has a contractive coupling.

\begin{restatable}{theorem}{glauber}
\label{thm:glauber}
\
	\begin{enumerate}
		\setlength{\itemsep}{.7pt}
		\item[(1)] If $\mu$ is $\ctrct$-contractive w.r.t.\ the Glauber dynamics and an arbitrary $w$-weighted Hamming metric,
		then $\mu$ is spectrally independent with constant $\eta =\frac{2}{(1-\ctrct)n}$. In particular, if $\ctrct\leq 1-\frac{\eps}{n}$, then $\eta\leq \frac{2}{\eps}$.
		\item[(2)] If the metric in (1) is not a weighted Hamming metric but instead an arbitrary $\gamma$-equivalent metric, then $\eta =\frac{2\gamma^2}{(1-\ctrct)n}$.
		In particular, if $\ctrct\leq 1-\frac{\eps}{n}$, then $\eta\leq \frac{2\gamma^2}{\eps}$.
	\end{enumerate}
\end{restatable}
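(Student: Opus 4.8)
\emph{Proof proposal.} The plan is to avoid working with the matrix $J$ directly and instead (i) rewrite $\eta$-spectral independence as a variance inequality for sums of single-site functions, and (ii) deduce that inequality from the contractive coupling by first extracting a spectral gap for the Glauber dynamics and then using the very explicit shape of its Dirichlet form on functions of the form $\sum_\vx g_\vx(\si_\vx)$.

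Since the contraction hypothesis is assumed for the Glauber dynamics $P^\t$ of \emph{every} conditional measure $\mu^\t$, it suffices to prove $\lambda_1(J)\le\eta$ for $\mu=\mu^\emptyset$ and then repeat the argument word for word with $\mu$ replaced by $\mu^\t$. For the reformulation I would use that $J$ is self-adjoint for the inner product $\langle\cdot,\cdot\rangle_\pi$ weighted by the single-site marginals, so $\lambda_1(J)=\sup_g\langle Jg,g\rangle_\pi/\langle g,g\rangle_\pi$; writing $g_\vx(\cdot)=g(\vx,\cdot)$ and using only Definition~\ref{def:J} together with the tower rule, a short computation gives $\langle Jg,g\rangle_\pi\propto\var_\mu\big(\sum_\vx g_\vx(\si_\vx)\big)-\sum_\vx\var_\mu\big(g_\vx(\si_\vx)\big)$ and $\langle g,g\rangle_\pi\propto\sum_\vx\E_\mu[g_\vx(\si_\vx)^2]\ge\sum_\vx\var_\mu(g_\vx(\si_\vx))$ (same constant, equality for centered $g$). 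Since centering each $g_\vx$ does not change the numerator, optimizing over $g$ yields
\[
1+\lambda_1(J)\;=\;\sup_{(g_\vx)_\vx}\ \frac{\var_\mu\!\big(\sum_{\vx\in V}g_\vx(\si_\vx)\big)}{\sum_{\vx\in V}\var_\mu\!\big(g_\vx(\si_\vx)\big)} .
\]

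To bound the right-hand side with the coupling, let $P$ be the (heat-bath) Glauber dynamics, reversible with respect to $\mu$, and let $\|f\|_{\mathrm{Lip}}$ be the Lipschitz seminorm for the metric $d$ in the hypothesis ($d=d_w$ in part~(1), a $\gamma$-equivalent metric in part~(2)). Pairing two states through the contractive coupling gives $|Pf(\sigma)-Pf(\tau)|\le\|f\|_{\mathrm{Lip}}\,\E[d(X_1,Y_1)\mid X_0{=}\sigma,Y_0{=}\tau]\le\ctrct\,\|f\|_{\mathrm{Lip}}\,d(\sigma,\tau)$, i.e.\ $\|Pf\|_{\mathrm{Lip}}\le\ctrct\|f\|_{\mathrm{Lip}}$; applied to an eigenfunction this forces every eigenvalue $\nu\neq1$ of $P$ to satisfy $|\nu|\le\ctrct$, hence $\gap(P)\ge1-\ctrct$ and $\var_\mu(F)\le(1-\ctrct)^{-1}\cE_P(F)$ for all $F$, with $\cE_P(F)=\langle F,(\Id-P)F\rangle_\mu$. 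Writing $P=\tfrac1n\sum_\vx P_\vx$ with $P_\vx$ the orthogonal-projection heat-bath update at $\vx$, for $F=\sum_\vx g_\vx(\si_\vx)$ one has $(\Id-P_\vx)F=g_\vx(\si_\vx)-\E_\mu[g_\vx(\si_\vx)\mid\si_{N(\vx)}]$, so
\[
\cE_P(F)=\tfrac1n\sum_\vx\big\|(\Id-P_\vx)F\big\|_\mu^2=\tfrac1n\sum_\vx\E_\mu\big[\var(g_\vx(\si_\vx)\mid\si_{N(\vx)})\big]\le\tfrac1n\sum_\vx\var_\mu\big(g_\vx(\si_\vx)\big)
\]
by the law of total variance. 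Combining the three displays, $\var_\mu(\sum_\vx g_\vx(\si_\vx))\le\frac1{(1-\ctrct)n}\sum_\vx\var_\mu(g_\vx(\si_\vx))$ for every $g$, hence $\lambda_1(J)\le\frac1{(1-\ctrct)n}-1$. Since this is insensitive to which metric $d$ is used (only the rate $\ctrct$ enters), it covers both parts at once: $\lambda_1(J)\le\frac1{(1-\ctrct)n}\le\frac2{(1-\ctrct)n}=\eta$ in part~(1), and $\le\frac{2\gamma^2}{(1-\ctrct)n}=\eta$ in part~(2) — in fact a little stronger than claimed.

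The computation is short, so the ``hard part'' is three bookkeeping points: (a) the quadratic-form identity in the reformulation, where the exact (CGSV) normalization of $J$ and the weighting of $\langle\cdot,\cdot\rangle_\pi$ must be matched carefully; (b) the legitimacy of the implication ``contraction $\Rightarrow\gap(P)\ge1-\ctrct$'', which relies on the coupling being available for \emph{all} pairs of states, not only Hamming-adjacent ones, and on $d$ being a genuine metric, so that $\|f\|_{\mathrm{Lip}}\in(0,\infty)$ for every non-constant $f$; and (c) carrying out the reduction of the first step uniformly over pinnings, i.e.\ checking that the normalizations of $P^\t$ and $J^\t$ for a pinning on $U$ only replace $n$ by a quantity at most $n$, so the bound does not deteriorate.
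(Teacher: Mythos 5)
Your proposal is correct, but it takes a genuinely different route from the paper. The paper bounds $\lambda_1(J^\t)$ through a weighted $\ell_\infty$-norm of the influence matrix: it writes the ($w$-weighted) row sum of absolute influences of a pair $(\vx,\spa)$ as $\E_\nu f-\E_\mu f$ for a Lipschitz test function $f$ and $\nu=\mu(\cdot\mid\si_\vx=\spa)$, and then invokes a Stein-type comparison lemma (Lemma~\ref{lem:general-stein}, via the Poisson equation) bounding $|\E_\mu f-\E_\nu f|$ by $\tfrac{L_d(f)}{1-\ctrct}$ times the one-step Wasserstein discrepancy between the Glauber chains for $\mu$ and for $\nu$; the metric enters through $L_d(f)\le 2$ (or $2\gamma$) and $W_{1,d}(P(\si,\cdot),Q(\si,\cdot))\le w(\vx)/n$ (or $\gamma/n$), which is where the constants $2$ and $2\gamma^2$ come from. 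You instead use the Rayleigh-quotient reformulation $1+\lambda_1(J^\t)=\sup_g \var_{\mu^\t}(\sum_\vx g_\vx)/\sum_\vx\var_{\mu^\t}(g_\vx)$ (valid since $\pi J^\t$ is the covariance matrix of the indicators, so $J^\t$ is self-adjoint and centering only shrinks the denominator), derive $\gap(P^\t)\ge 1-\ctrct$ from the contraction of the Lipschitz seminorm applied to eigenfunctions of the reversible chain, and exploit that each heat-bath update $P_\vx$ is an orthogonal projection so that $\cE_{P^\t}(\sum_\vx g_\vx)\le\tfrac1n\sum_\vx\var_{\mu^\t}(g_\vx)$; all three steps check out, including your treatment of pinnings under the paper's convention that pinned vertices are still selected with probability $1/n$. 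What each approach buys: yours is more elementary (no Poisson equation), treats (1) and (2) simultaneously, and gives the metric-independent and slightly sharper constant $\tfrac1{(1-\ctrct)n}-1$, needing only that $d$ is a genuine metric; but it leans crucially on reversibility and the projection structure of single-site heat-bath updates, so it does not extend to the paper's Theorem~\ref{thm:general}/\ref{thm:general-general} for general (possibly non-reversible, configuration-dependent) select-update dynamics, which is exactly what the Wasserstein/Stein lemma is built to handle and why the paper proves Theorem~\ref{thm:glauber} the way it does.
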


Note a $\ctrct$-contractive coupling for the Hamming distance immediately implies $O(n\log{n})$ mixing time of the Glauber dynamics (see, e.g.,~\cite{BubleyDyer, LevinPeresWilmer}). 
But the above theorem offers two additional features.  First, it allows arbitrary weights $w$ and the resulting 
bound on the mixing time is independent of these weights,  whereas a coupling argument, such as utilized
in path coupling~\cite{BubleyDyer}, yields a
mixing time bound which depends on the ratio of $\max_\vx w(\vx)/\min_\vx w(\vx)$.
Second, as discussed in the previous theorems, spectral independence (together with the
easily satisfied marginal boundedness) implies optimal bounds on the mixing time and entropy decay rate for 
arbitrary heat-bath block dynamics.


We can extend Theorem~\ref{thm:glauber} by replacing the Glauber dynamics with arbitrary Markov chains.
In particular, we consider a general class of Markov chains which we call the \emph{select-update dynamics}. 
In each step, the select-update dynamics picks a block $B \in \mathcal{B}$ randomly (with a distribution that may depend on the current configuration), and updates all vertices in $B$ using the current configuration (and the pinning if there is one).
Note that no assumptions are made on how to pick or update the blocks; the only requirement is that the dynamics converges to the correct stationary distribution.
If the chain selects a block $B$ from a fixed distribution over $\mathcal{B}$ and updates $B$ using the conditional marginal distribution on $B$ (under the pinning if applicable), then this is the standard heat-bath block dynamics that we introduced earlier; hence,  the select-update dynamics is much more general than the weighted heat-bath block dynamics.
Another example of the select-update dynamics is the flip dynamics for sampling random colorings of a graph; see Section~\ref{subsec:coloring}.

We define $M = \max_{B \in \mathcal{B}} |B|$ to be the maximum block size
and $D$ to be the maximum probability of a vertex being selected in any step of the chain.

\begin{restatable}{theorem}{general}
\label{thm:general}
If $\mu$ is $\ctrct$-contractive w.r.t.\ arbitrary select-update dynamics and an arbitrary $\gamma$-equivalent metric,
then $\mu$ is spectrally independent with constant $\eta =\frac{2\gamma^2DM}{1-\ctrct}$.
\end{restatable}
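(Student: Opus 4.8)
The plan is to reduce Theorem~\ref{thm:general} to Theorem~\ref{thm:glauber}(2) by extracting, from a $\kappa$-contractive coupling for the select-update dynamics, a suitable contraction estimate for the one-step influence on marginals, and then running the same argument that connects contraction to spectral independence. The key object to control is, for a fixed pinning $\t$ and a fixed vertex $\vx$ with two spin values $\spa, \spa'$, the quantity $\sum_{\vy} |J^\t(\vx,\spa;\vy,\cdot)|$ (the total influence of pinning $\spa$ versus $\spa'$ at $\vx$ on the rest of the system); bounding the appropriate weighted version of this for every $\vx$ gives $\lambda_1(J^\t)\le\eta$ via the standard bound on the spectral radius by a weighted $\ell_1$/$\ell_\infty$ operator norm, exactly as in the proof of Theorem~\ref{thm:glauber}.

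First I would set up the coupling at stationarity: fix a pinning $\t$ on $U$, a vertex $\vx\notin U$, and spins $\spa\ne\spa'\in\Omega^\t_\vx$, and let $X_0\sim\mu^{\t,\vx\mapsto \spa}$ and $Y_0\sim\mu^{\t,\vx\mapsto \spa'}$ be the two conditional measures, coupled so that they differ only at~$\vx$ initially (this differs by $w(\vx)$ in the weighted Hamming metric, or by at most $\gamma$ in a $\gamma$-equivalent metric). Run one step of the select-update dynamics $P^\t$ under the contractive coupling; by $\kappa$-contraction, $\E[d(X_1,Y_1)]\le \kappa\, d(X_0,Y_0)$. The crucial point is that after one step the two chains have $\vx$ re-randomized, so iterating and summing the geometric series as in~\cite{CLV20} (or as in Theorem~\ref{thm:glauber}) converts the per-step contraction into a bound on the stationary discrepancy, which is precisely the total influence out of $(\vx,\spa)$ versus $(\vx,\spa')$. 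The factors $D$ and $M$ enter here: in the select-update dynamics a single step can update an entire block of size up to $M$ and a given vertex is touched with probability at most $D$, so the "rate" at which discrepancies at other vertices get created per unit of discrepancy removed at $\vx$ is governed by $DM$ rather than by $1/n$ as for Glauber — this is what replaces the $1/n$ factor in Theorem~\ref{thm:glauber} and produces the $\eta = 2\gamma^2 DM/(1-\kappa)$ bound.

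The main technical obstacle is handling the fact that the select-update dynamics selects blocks from a configuration-dependent distribution and updates them by an arbitrary (not necessarily heat-bath) rule, so the one-step transition does not factor nicely and one cannot directly read off influences from a product structure. To get around this I would work directly with the coupling's expected-distance contraction as a black box, never opening up the update rule: the argument only needs (i) that after one step the marginal at a re-randomized vertex equilibrates in the right conditional sense, and (ii) the quantitative contraction. One must also be careful that the select-update dynamics is only guaranteed to be \emph{reversible}/converge to $\mu^\t$, not to have any stronger structure; but reversibility plus the contraction is exactly the input used in the Glauber case, so the same linear-algebraic step — bounding $\lambda_1(J^\t)$ by the max over $\vx$ of a $w$-weighted sum of absolute influences, then identifying that sum with the summed stationary discrepancy controlled above — goes through with $\gamma^2$ from passing between the $\gamma$-equivalent metric and Hamming, and with $DM$ tracking how much of the system one block-step can disturb. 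Finally, taking the maximum over all pinnings $\t$ gives $\eta$-spectral independence as defined.
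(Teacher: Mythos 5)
There is a genuine gap at the heart of your argument: the comparison between the two conditional measures. You propose to take $X_0\sim\mu^{\t,\vx\mapsto\spa}$ and $Y_0\sim\mu^{\t,\vx\mapsto\spa'}$ ``coupled so that they differ only at $\vx$ initially,'' but no such coupling exists in general -- if the two conditional laws of the spins off $\vx$ agreed, the influences you are trying to bound would all be zero. Moreover, the contraction hypothesis only provides a coupling of two copies of the \emph{same} chain $P^\t$; what the theorem requires is a comparison of chains run under \emph{different} pinnings (with $\vx$ fixed versus free, or fixed to two different values). If, as you describe, you run $P^\t$ on both copies so that $\vx$ gets ``re-randomized,'' both copies simply converge to $\mu^\t$ and the quantity you wanted (the discrepancy between the conditional measures) disappears from the analysis. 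You also lean on the claim that ``after one step the marginal at a re-randomized vertex equilibrates in the right conditional sense,'' which is a heat-bath property that an arbitrary select-update rule need not satisfy -- and which the correct argument never uses.

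The paper closes exactly this gap with two ingredients you do not supply. First, a Stein-type comparison lemma (Lemma~\ref{lem:general-stein}, proved via the Poisson equation $h=\sum_{t\ge 0}P^t\bar f$): if $\mu^\t$ is $\ctrct$-contractive w.r.t.\ $P^\t$ and $d$, then for any test function $f$,
\begin{equation}
|\bbE_{\mu^{\t'}}f-\bbE_{\mu^\t}f|\;\le\;\frac{L_d(f)}{1-\ctrct}\,\bbE_{\mu^{\t'}}\bigl[W_{1,d}\bigl(P^\t(\si,\cdot),P^{\t'}(\si,\cdot)\bigr)\bigr],
\end{equation}
where $\t'=\t\cup(\vx,\spa)$. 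This is where your ``geometric series'' intuition is made rigorous, and note it compares $\mu^{\t'}$ with $\mu^\t$ (matching Definition~\ref{def:J}), not $\spa$ versus $\spa'$. Second, a one-step locality bound (Definition~\ref{def:local-chains} and Theorem~\ref{thm:general-general}): coupling $P^\t$ and $P^{\t'}$ from the \emph{same} state by selecting the same block, they differ only when the block contains $\vx$ (probability at most $D$) and then by at most $M$ in Hamming distance, so $W_{1,d_{\mathrm{H}}}(P^\t(\si,\cdot),P^{\t'}(\si,\cdot))\le DM$. Plugging the signed-indicator test function (Lipschitz constant $\le 2\gamma$ for a $\gamma$-equivalent metric, with another factor $\gamma$ from converting the Wasserstein bound) gives row sums at most $2\gamma^2DM/(1-\ctrct)$, hence the claimed $\eta$. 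Your intuition for why $DM$ replaces $1/n$, and the reduction of $\lambda_1(J^\t)$ to weighted row sums, are both correct; what is missing is the mechanism that converts single-chain contraction into a bound on the distance between stationary measures under adjacent pinnings, without assuming any equilibration or product structure of the update rule.
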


Theorem~\ref{thm:general} generalizes Theorem~\ref{thm:glauber}(2) since $M = 1$ and $D = 1/n$ for the Glauber dynamics.
If we further assume that the select-update dynamics updates each connected component of a block independently, then the maximum block size $M$ can be replaced by the maximum component size of a block; see Remark~\ref{rmk:block-general}.
See also Theorem~\ref{thm:general-general} for a stronger statement involving arbitrary Markov chains, where $DM$ is replaced by
the maximum expected distance of two chains when pinning a single vertex.
This more general statement potentially applies to chains with unbounded block sizes, including the Swendsen-Wang dynamics.

It is worth remarking that, as a corollary of Theorem~\ref{thm:general}
we obtain that a coupling argument for the select-update dynamics where the maximum block size is constant (and $D/(1-\ctrct)=O(1)$)
 implies $O(n\log{n})$ mixing time of the Glauber dynamics, 
together with the optimal mixing and optimal entropy decay for arbitrary heat-bath block dynamics.

Moreover, as a corollary of Theorem~\ref{thm:glauber} we obtain that the Dobrushin uniqueness condition implies spectral independence.
The Dobrushin uniqueness condition is a classical condition in statistical physics which considers the following dependency matrix.
\begin{definition}[Dobrushin uniqueness condition]
The \emph{Dobrushin dependency/influence matrix} $R \in \R^{V \times V}$ is defined by $R(x,x) = 0$ and
\[
R(x,y) =
\max \left\{ \tv{\mu_y(\cdot \mid \sigma)}{\mu_y(\cdot \mid \tau)}: (\si,\tau) \in \cS_{x,y} \right\} \quad \text{~for~} x \neq y
\]
where $\cS_{x,y}$ is the set of all pairs of configurations on $V \setminus \{y\}$ that can differ only at $x$.
The \emph{Dobrushin uniqueness condition} holds if the maximum column sum of $R$ is at most $1-\eps$ for some $\eps>0$.
\end{definition}

The Dobrushin dependency matrix for the entry $R(x,y)$ considers the worst case pair of configurations on the entire
neighborhood of $y$ which differ at $x$.  If $x$ is not a neighbor of $y$ then $R(x,y)=0$.  Hence, the Dobrushin uniqueness
condition states that for all $y$, $\sum_{x\in N(y)} R(x,y) < 1$.  In contrast, the ALO influence matrix considers the influence of
a disagreement at $x$ on a vertex $y$ (which is not necessarily a neighbor) and no other vertices are fixed, although one needs to
consider all pinnings to establish spectral independence, so the notions are incomparable at first glance.

Using Theorem~\ref{thm:glauber} we prove that
the Dobrushin uniqueness condition implies spectral independence.
Moreover, our result holds under generalizations of the Dobrushin uniqueness condition.
Hayes~\cite{Hayes} generalized it to the following spectral condition:
if $\|R\|_2 \le 1-\eps$
for some $\eps>0$, then the mixing time of the Glauber dynamics is $O(n\log{n})$.
This was further generalized by Dyer et al.~\cite{DGJ09} to arbitrary matrix norms.
We prove spectral independence when the spectral radius $\rho(R) < 1$, which is the strongest statement of this type as the spectral radius is no larger than any matrix norm;
see Remark~\ref{rmk:dob} for a more detailed discussion.

\begin{restatable}{theorem}{dep}
\label{thm:dep-inf}
If the Dobrushin dependency matrix $R$ satisfies $\rho(R) \le 1-\eps$ for some $\eps > 0$,
then $\mu$ is spectrally independent with constant $\eta =2/\eps$.
\end{restatable}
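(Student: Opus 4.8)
The plan is to deduce Theorem~\ref{thm:dep-inf} from Theorem~\ref{thm:glauber}(1): I would show that $\rho(R)\le 1-\eps$ produces a contractive coupling of the Glauber dynamics with respect to a suitably \emph{weighted} Hamming metric, and then quote Theorem~\ref{thm:glauber}(1) to turn that coupling into a spectral-independence bound. A weighted metric is essential here: a bound on the spectral radius of the nonnegative matrix $R$ controls neither its row nor its column sums, so the unweighted path-coupling computation need not close; the weights have to be adapted to $R$.

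First I would fix the weights. Given any $\eps'\in(0,\eps)$, since $R\ge 0$ entrywise and $\rho\big((1-\eps')^{-1}R\big)=(1-\eps')^{-1}\rho(R)<1$, the Neumann series $w:=\sum_{k\ge 0}(1-\eps')^{-k}R^{k}\mathbf 1$ converges entrywise, where $\mathbf 1$ is the all-ones vector on $V$. By construction $w\ge\mathbf 1$, so $w:V\to\R_+$ is a strictly positive weight function, and
\[
Rw=(1-\eps')\sum_{k\ge 1}(1-\eps')^{-k}R^{k}\mathbf 1=(1-\eps')(w-\mathbf 1)\le(1-\eps')\,w
\]
componentwise. (If $R$ happens to be irreducible one could instead take $w$ to be its Perron eigenvector and use $\rho(R)\le 1-\eps$ directly; the series above is just a clean way to handle reducible $R$ without perturbing.)

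Next I would run the standard one-step path-coupling analysis of the Glauber dynamics for every conditional measure $\mu^\t$, using the single metric $d_w$ with this fixed $w$. Let $\t$ be a pinning and $X_0,Y_0\in\Omega^\t$ differ at exactly one (necessarily unpinned) vertex $u$, so $d_w(X_0,Y_0)=w(u)$; pick the vertex $v$ to update. If $v=u$, then $X_0$ and $Y_0$ agree on all of $N(u)$ (pinned neighbors included), so the two conditional marginals at $u$ coincide and the update can be coupled to agree, giving $d_w(X_1,Y_1)=0$. If $v\ne u$, the chains still differ at $u$, and they can differ at a vertex other than $u$ after the step only if $v$ is an unpinned neighbor of $u$; in that case the optimal coupling of the two conditional marginals at $v$ makes them disagree at $v$ with probability at most $R(u,v)$. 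The point that makes this uniform in $\t$ is that the Glauber update at $v$ resamples from the marginal of $\mu$ conditioned on the \emph{whole} configuration on $V\setminus\{v\}$ (free vertices together with the pinning), and the two such configurations seen by the coupled chains differ only at $u$, so the definition of $R$ applies verbatim and gives the bound $R(u,v)$ for every $\t$ (with $R(u,v)=0$ unless $v\in N(u)$). Collecting terms,
\[
\bbE\big[d_w(X_1,Y_1)\mid X_0,Y_0\big]\le\Big(1-\tfrac1n\Big)w(u)+\tfrac1n\sum_{v\in V}R(u,v)\,w(v)=\Big(1-\tfrac1n\Big)w(u)+\tfrac1n\,(Rw)(u)\le\Big(1-\tfrac{\eps'}{n}\Big)w(u).
\]
Since the single-vertex-disagreement pairs weighted by $w$ generate $d_w$, the path-coupling principle~\cite{BubleyDyer} upgrades this to $\bbE[d_w(X_1,Y_1)\mid X_0,Y_0]\le(1-\eps'/n)\,d_w(X_0,Y_0)$ for all $X_0,Y_0\in\Omega^\t$ and all $\t$; that is, $\mu$ is $(1-\eps'/n)$-contractive with respect to the Glauber dynamics and $d_w$.

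Finally I would invoke Theorem~\ref{thm:glauber}(1) with $\ctrct=1-\eps'/n$, which gives $\lambda_1(J^\t)\le\frac{2}{(1-\ctrct)n}=\frac{2}{\eps'}$ for every pinning $\t$; since $\eps'\in(0,\eps)$ is arbitrary and $\lambda_1(J^\t)$ does not depend on it, letting $\eps'\uparrow\eps$ yields $\lambda_1(J^\t)\le 2/\eps$, so $\mu$ is $(2/\eps)$-spectrally independent. The argument is essentially routine weighted path coupling; the two points to be careful about --- which I expect are the only genuine subtleties --- are that $R$ may be reducible (handled by the resolvent weights above rather than a Perron eigenvector) and that one fixed coupling must serve all conditional measures $\mu^\t$ simultaneously (handled by the observation that the definition of $R$ already dominates every conditional influence).
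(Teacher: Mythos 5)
Your overall route is the same as the paper's: produce a strictly positive weight vector $w$ with $Rw\le(1-\eps')w$, deduce that the Glauber dynamics is $(1-\eps'/n)$-contractive for the $w$-weighted Hamming metric uniformly over pinnings, and then invoke Theorem~\ref{thm:glauber}(1). Within that route you make two technically different (and arguably cleaner) choices. You construct $w$ via the Neumann series $\sum_{k\ge0}(1-\eps')^{-k}R^k\mathbf{1}$, which handles reducible $R$ in one stroke and only costs the harmless limit $\eps'\uparrow\eps$; the paper instead uses the Perron eigenvector, perturbs to $R_\delta=R+\delta O$ when $R$ is reducible, and passes to the limit $\delta\to0$, and it treats pinnings separately via the entrywise bound $R^\tau\le R$ and Lemma~\ref{lem:Meyer}, whereas you absorb the pinning directly into the coupling by noting that $R(u,v)$ dominates the conditional influence for every $\tau$ (this is correct, and consistent with the convention of Remark~\ref{rmk:glauber-pinning} that pinned vertices may be selected with no effect). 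The remaining difference is that where the paper simply quotes Lemma~\ref{lem:hayes} (\cite[Lemma 20]{DGJ09}) for the contraction statement, you re-derive it by path coupling.

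That re-derivation contains the one genuine gap in the stated generality. Your one-step estimate for pairs at a single-site disagreement is fine, but the upgrade to arbitrary pairs rests on the assertion that the single-site-disagreement pairs, weighted by $w$, ``generate'' $d_w$ on $\Omega^\tau$. For spin systems with hard constraints (colorings, hard-core), two feasible configurations at Hamming distance $k$ need not be joined inside $\Omega^\tau$ by a path that flips only the $k$ disagreement sites, so the Bubley--Dyer path metric generated by feasible adjacent pairs can strictly exceed $d_w$; path coupling then yields contraction only for that larger path metric, while the definition of $\ctrct$-contraction used in Lemma~\ref{lem:general-stein} and Theorem~\ref{thm:glauber}(1) requires the estimate for \emph{all} pairs with respect to $d_w$ itself. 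Since Theorem~\ref{thm:dep-inf} is stated for general (totally-connected) spin systems, this step needs justification: either cite Lemma~\ref{lem:hayes} as the paper does, or restrict the hand-made coupling argument to fully supported $\mu$, or observe that the proof of Theorem~\ref{thm:glauber}(1) goes through verbatim with the path metric generated by the $w$-weighted adjacent pairs (the test function $f_w$ is still $2$-Lipschitz for it, and the one-step Wasserstein bound $w(\vx)/n$ still holds because the two coupled updates differ at most at the selected vertex). With that point repaired, the rest of your argument — the weights, the uniform-in-$\tau$ influence bound, and the limit $\eps'\uparrow\eps$ — is correct.
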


Previously, Marton~\cite{Marton15} (see also~\cite{GSS,SS}) showed that the spectral condition in Theorem~\ref{thm:dep-inf}
implies approximate tensorization of entropy and thus optimal bounds on the modified log-Sobolev constant for the Glauber dynamics. See also \cite{BB19} for related results with an alternative technique. However, the approach in these works does not imply block factorization of entropy as in our case.

\begin{remark}\label{Remark:PTconj}
Our definition of $\ctrct$-contraction is equivalent to the statement that the Markov chain has coarse Ollivier-Ricci curvature at least $1-\ctrct>0$ with respect to the metric $d$ \cite{Ollivier}. Combining Theorem \ref{thm:glauber} with Theorem \ref{thm:block-factorization} we obtain a proof of the following version of the Peres-Tetali conjecture:
 if the Glauber dynamics  has  Ollivier-Ricci curvature at least $\eps/n>0$ then the Glauber dynamics has a modified log-Sobolev constant at least $c/n$ and any $\a$-weighted heat-bath block dynamics has a modified log-Sobolev constant at least $c\,\delta(\a)$,  for some constant $c=c(\eps,b,\Delta)>0$, where $\delta(\a)$ is defined in \eqref{eqn:delta}.
Replacing Theorem \ref{thm:glauber} with its generalization Theorem \ref{thm:general} we obtain the same conclusion under the much milder assumption that there exists some $\ctrct$-contractive  select-update dynamics satisfying $DM/(1-\ctrct)=O(1)$.
The original  Peres-Tetali conjecture in the setting of random walks on graphs is that if there exists a graph metric $d$ such that  the random walk has Ollivier-Ricci curvature at least $\lambda>0$ with respect to $d$ then the random walk has modified log-Sobolev constant at least $c\lambda>0$, for some universal constant $c>0$; see Conjecture 3.1 in Eldan et al.~\cite{ELL}.
\end{remark}

The organization of the paper is demonstrated in Figure~\ref{fig:map}.
After giving preliminaries in Section~\ref{sec:prelim}, we deduce general block factorization of entropy from uniform block factorization in Section~\ref{sec:ubf:to:gbf}; the latter is a key intermediate step in the proof approach of \cite{CLV20}.
Later in Section~\ref{sec:spectral-ind}, we reformulate the result of \cite{CLV20} showing uniform block factorization given spectral independence; our new proof avoids abstract simplicial complexes and gives a slightly better constant.
In Section~\ref{sec:contraction-SI}, we establish spectral independence if the distribution admits a contractive Markov chain.
Finally, in Section~\ref{sec:SW} we show optimal mixing and optimal entropy decay of the Swendsen-Wang dynamics if $k$-partite factorization holds, which can be further deduced from spectral independence.

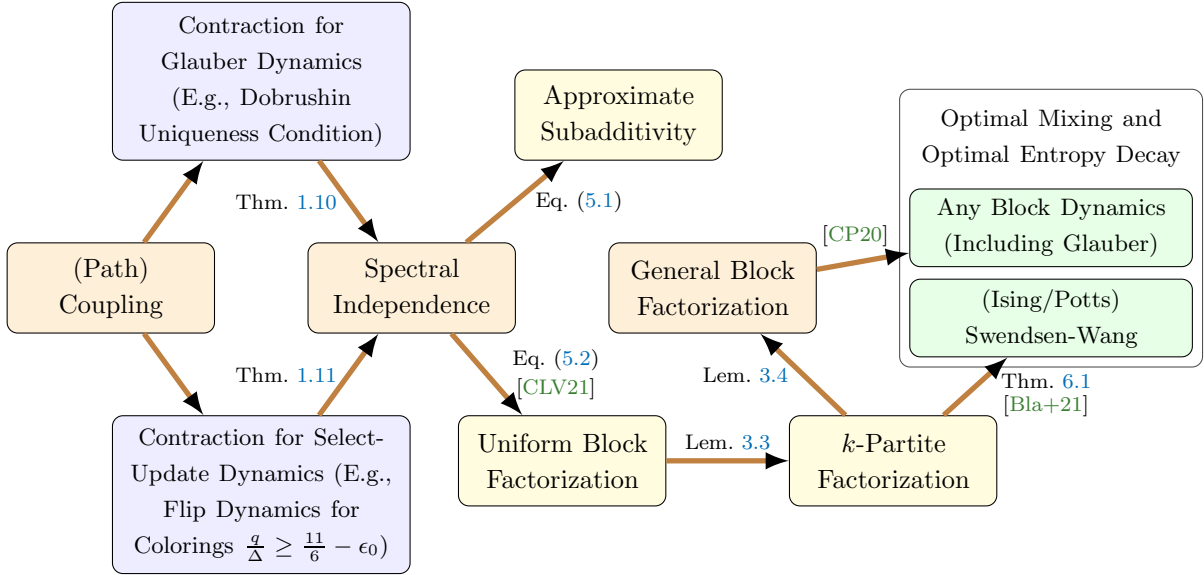
\begin{figure}\label{fig:map}
\centering
\begin{tikzpicture}
\tikzstyle{SI}   = [rounded corners, minimum height = 1.2cm, text width = 2.5cm, text centered, draw = black, fill = YellowOrange!15]
\tikzstyle{UBF}  = [rounded corners, minimum height = 1.2cm, text width = 2.5cm, text centered, draw = black, fill = yellow!15]

\tikzstyle{Ctct} = [rounded corners, minimum height = 2.1cm, text width = 3.7cm, text centered, draw = black, fill = blue!7]

\tikzstyle{SW}   = [rounded corners, minimum height = 1.0cm, text width = 3.5cm, text centered, draw = black, fill = green!10]
\tikzstyle{Opt}  = [minimum height = 1.0cm, text width = 3.5cm, text centered]

\tikzstyle{arrow} = [->, > = latex, line width = 2pt, draw = brown]

\node (SI) at (0,0) [SI] {\small Spectral Independence};
\node (GBF) at (4.0,0) [SI] {\small General Block Factorization};
\node (PC) at (-4.0, 0) [SI] {\small (Path) Coupling};

\node (UBF) at (2.0,-2.3) [UBF] {\small Uniform Block Factorization};
\node (kPF) at (6.4,-2.3) [UBF] {\small $k$-Partite Factorization};
\node (AS) at (2.75,2.3) [UBF] {\small Approximate Subadditivity};

\node (ct-gl) at (-2.0,2.75) [Ctct] {\footnotesize Contraction for Glauber Dynamics (E.g., Dobrushin Uniqueness Condition)};
\node (ct-su) at (-2.0,-2.75) [Ctct] {\footnotesize Contraction for Select-Update Dynamics (E.g., Flip Dynamics for Colorings $\frac{q}{\Delta} \ge \tfrac{11}{6} - \eps_0$)};

\node (Block) at (8.5,0.8) [SW] {\footnotesize Any Block Dynamics (Including Glauber)};
\node (SW) at (8.5,-0.4) [SW] {\footnotesize (Ising/Potts) Swendsen-Wang};
\node (Opt) at (8.5, 2.0) [Opt] {\footnotesize Optimal Mixing and Optimal Entropy Decay};
\node (bigbox) [rounded corners, draw = black!70, fit = (Opt)(Block)(SW)] {};

\draw [arrow] (PC) -- (ct-gl);
\draw [arrow] (PC) -- (ct-su);

\draw [arrow] (ct-gl) -- node[anchor=east] {\scriptsize Thm.\ \ref{thm:glauber}} (SI);
\draw [arrow] (ct-su) -- node[anchor=east] {\scriptsize Thm.\ \ref{thm:general}} (SI);

\draw [arrow] (SI) -- node[anchor=west, xshift = 3pt] {\scriptsize Eq.\ \eqref{eq:ubfsub11}} (AS);

\draw [arrow] (SI) -- node[anchor=west, xshift = 5pt] {\scriptsize \shortstack{Eq.\ \eqref{eq:ubfsub121}\\\cite{CLV20}}} (UBF);
\draw [arrow] (UBF) -- node[anchor=south] {\scriptsize Lem.\ \ref{lem:ubf:boosting}} (kPF);
\draw [arrow] (kPF) -- node[anchor=east, xshift = -1pt] {\scriptsize Lem.\ \ref{lem:kpart:gen}} (GBF);

\draw [arrow] (GBF) -- node[anchor=south, xshift = -3pt, yshift = 1pt] {\scriptsize \cite{CP20}} (Block);
\draw [arrow] (kPF) -- node[anchor=west, xshift = 5pt, yshift = -3.2pt] {\scriptsize \shortstack{Thm.\ \ref{th:main-intro2}\\\cite{BCPSV20}}} (SW);
\end{tikzpicture}
\caption{Organization of the paper}
\end{figure}

\section{Preliminaries}\label{sec:prelim}

\subsection{Spin systems}

We begin with the formal definition of general $q$-state spin systems.
Let $q \ge 2$ be an integer and write $[q] = \{1,\dots,q\}$.
Let $G = (V \cup \partial V,E \cup \partial E)$ be an undirected graph where $\partial V$ denotes the boundary set of the induced subgraph $G' = (V,E)$, and $\partial E$ consists of all edges between $V$ and $\partial V$. 
A $q$-spin system on $G$ with a boundary condition $\xi \in [q]^{\partial V}$ is parameterized by nonnegative symmetric matrices $A_{\vx \vy} \in \R_{+}^{q\times q}$, $\{\vx,\vy\} \in E \cup \partial E$, representing the nearest neighbor interactions,
and vectors $B_\vx \in \R_{+}^q$, $\vx\in V$, representing the external fields.
A configuration $\sigma \in [q]^V$ has weight:
\[  w(\sigma) = \prod_{\{\vx,\vy\} \in E}  A_{\vx\vy}(\sigma_\vx, \sigma_\vy) \prod_{\substack{\{\vx,\vy\} \in \partial E \\ \vx\in V,\, \vy \in \partial V}}  A_{\vx\vy}(\sigma_\vx, \xi_\vy) \prod_{\vx \in V} B_\vx(\sigma_\vx).
\]
Let $\Omega = \{\sigma\in [q]^V: w(\sigma)>0\}$ denote the collection of all feasible configurations and let $Z_G = \sum_{\sigma\in\Omega} w(\sigma)$ denote the partition function.
We assume that $\Omega \neq \emptyset$; i.e., the boundary condition $\xi$ is feasible.
Finally,
the \emph{Gibbs distribution} $\mu$ is given by, for $\sigma\in\Omega$,
\[
\mu(\sigma) = w(\sigma)/Z_G.
\]

Recall the notion of pinning from Section~\ref{sec:intro-spectral-independence} which
we briefly repeat here for convenience.
For $U\subset V$, we use the notation $\sigma_U=(\sigma_\vx)_{\vx\in U}$ and let $\Omega_U = \{\tau\in [q]^U: \exists \sigma\in\Omega, \sigma_U=\tau\}$ be the set of all possible pinnings on $U$. 
Note, for $\vx\in V$, $\Omega_\vx$ is the set of feasible spin assignments for vertex $\vx$.
Denote the collection of all pinnings by  $\Pinning = \cup_{U \subset V} \Omega_U$ and 
denote the set of all feasible vertex-spin pairs by $\VSpairs = \{(\vx,\spa): \vx \in V, \spa \in \Omega_\vx\}$.
For $\tau \in \Pinning$, let $\mu^\t$ denote the conditional Gibbs distribution $\mu(\cdot\tc\sigma_U=\t)$. We also write $\mu_\Lambda^\t=\mu^\t$ if $\t\in \Omega_{V\setminus\Lambda}$ and use the notation $\mu_\Lambda:\;\Omega_{V\setminus\Lambda}\ni \t\mapsto \mu_\Lambda^\t$ for the associated mapping.

For a pinning $\tau\in\Omega_U$ for $U \subset V$,
let $\Omega^\tau = \{\sigma\in\Omega: \sigma_U=\tau\}$ denote the corresponding state space; i.e., $\Omega^\tau$ is the support of $\mu^\tau$.
We also define $\Omega^\tau_W = \{\phi \in [q]^W: \exists \sigma \in \Omega^\tau, \sigma_W = \phi\}$ for $W \subset V \setminus U$ and $\VSpairs^\tau = \{(\vx,\spa): \vx \in V\setminus U, \spa \in \Omega^\tau_\vx\}$.
We say $\Omega^\tau$ is connected if
the graph on $\Omega^\tau$ with edges connecting pairs at Hamming distance 1 is connected.
The distribution $\mu$ over $\Omega$ is said to be \emph{totally-connected}
if for every $\tau \in \Pinning$, the set $\Omega^\tau$ is connected.   Throughout this paper,
we will assume the distribution $\mu$ is totally-connected as this is necessary for the Glauber dynamics to be ergodic for all conditional measures $\mu^\tau$.

We recall some classical examples of spin system.  The Ising/Potts model at inverse temperature $\beta\in\bbR$ corresponds to the interaction $A_{\vx\vy}(a,a')=\exp{(\beta\IND(a=a'))}$ and $B_\vx(a)=\exp{(h(a)) }$ where $h\in\bbR^q$ is a vector of external fields, with $q=2$ for the Ising model and $q\geq 3$ for the Potts model.  The hard-core (or independent sets) model with parameter $\lambda>0$ is obtained with $q=2$, $A_{\vx\vy}(a,a')=0$ if $a=a'=1$ and $A_{\vx\vy}(a,a')=1$ otherwise, and $B_\vx(a)=\lambda$ if $a=1$ and $B_\vx(a)=1$ if $a=2$. The $q$-colorings model corresponds to $A_{\vx\vy}(a,a')=\IND(a\neq a')$ and $B_\vx(a)=1$.
Note that the Ising/Potts models with any $\beta$ and $h$, as well as the hard-core model with any $\lambda>0$, and the $q$-colorings when $q\geq\Delta+2$ are totally-connected spin systems.

\subsection{Mixing time, entropy,  and log-Sobolev inequalities}

Let $P$ be the transition matrix of an ergodic Markov chain with finite state space $\Omega$ and stationary distribution $\mu$.
Let $P^t(X_0,\cdot)$ denote the distribution of the chain after $t$ steps
starting from the initial state $X_0 \in \Omega$. The {\it mixing time} $\tmix(P)$ of the chain is defined as
\[
\tmix(P) = \max\limits_{X_0 \in \Omega}\min \left\{ t \ge 0 : {\|{P}^t(X_0,\cdot)-\mu\|}_{\textsc{tv}} \le 1/4 \right\},
\]
where $\|\cdot\|_{\textsc{\tiny TV}}$ denotes total variation distance.

In this paper, we rely on functional inequalities related to entropy to bound the mixing time.
For a function $f:\Omega \mapsto \bbR$,
let $\mu[f] = \sum_{\s \in \Omega}  \mu(\s) f(\s)$
and $\var_\mu(f) = \mu[f^2] - \mu[f]^2$
denote its mean and variance with respect to $\mu$.
Likewise, for $f:\Omega\rightarrow\R_{+}$, the {\em entropy} of $f$ with respect to $\mu$ is defined as
\begin{align}\label{eq:def-ent}
\ent( f) = \mu\left[f \cdot \log \left(\frac{f}{\mu[f]}\right)\right] = \mu[f \cdot \log f] - \mu[f] \cdot \log \mu[f].
\end{align}

When $f\geq 0$ is such that  $\mu[f] =1$, then
$\ent (f) = H(f \mu\,|\, \mu)$ equals the {\em relative entropy}, or Kullback-Leibler divergence, of the distribution $f \mu$ with respect to  $\mu$.

For real functions $f,g$ on $\Omega$,
the {\em Dirichlet form} associated to the pair $(P,\mu)$ is defined as
\begin{align}\label{direlenti}
\cD_P(f,g)=\scalar{f}{(1-P)g}_\mu,
\end{align}
where $\scalar{f}{g}_\mu=\mu[fg]$ denotes the scalar product in $L^2(\mu)$.
When $P$ is reversible, i.e., $\mu(\sigma)P(\sigma,\tau) = \mu(\tau)P(\tau,\sigma)$, one has
\begin{align}\label{direlenti21}
\cD_P(f,g)=\frac12\sum_{\sigma,\tau \in \Omega}\mu(\sigma)P(\sigma,\tau)(f(\sigma)-f(\tau))(g(\sigma)-g(\tau)).
\end{align}

\begin{definition}\label{def:LSI}
	The pair $(P,\mu)$ satisfies the (standard) {\em log-Sobolev inequality} (LSI) with constant~$s$ if for all $f\geq 0$:
	\begin{align}\label{direlenti3}
	\cD_P(\sqrt f,\sqrt f)\geq s\, \ent (f).
	\end{align}
	It satisfies the {\em modified log-Sobolev inequality} (MLSI) with constant $\rho$ if for all $f\geq 0$:
	\begin{align}\label{direlenti4}
	\cD_P(f,\log f)\geq \rho\, \ent (f).
	\end{align}
	It satisfies the
	 (discrete time) {\em relative entropy decay with rate $\d > 0$} if for all distributions $\nu$:
	\begin{align}
		H(\nu P\tc\mu)\leq (1-\d) H(\nu \tc\mu).\label{relent2}
	\end{align}
	\end{definition}
	In this paper we focus on the entropy decay inequality \eqref{relent2} which may be seen as the discrete time analog of the modified log-Sobolev inequality.
	We recall some well known facts about its relation to the other two inequalities and its implications for mixing times. 
	\begin{lemma}\label{lem:mix}
	If $(P,\mu)$ satisfies the standard LSI with constant $s$ then it satisfies the MLSI with constant $\rho=2s$. If it satisfies the discrete time relative  entropy decay with rate $\d>0$, then it satisfies the MLSI with constant $\rho=\d$. Finally, if it satisfies the discrete time relative entropy decay with rate $\d>0$, then 
	\begin{align}\label{reltmix}
	\tmix(P) \leq 1+ \d^{-1}[\log(8)+\log\log(1/\mu_*)]\,,
	\end{align}
	where $\mu_* = \min_{\si\in\Omega} \mu(\si)$.
\end{lemma}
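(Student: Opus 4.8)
The plan is to prove the three implications in the order stated; all three are classical and rest on short computations, so I will only indicate the mechanism. Throughout I use that $P$ is reversible with respect to $\mu$ — which holds for all the Markov chains considered in this paper — so that $\cD_P$ admits the symmetric representation \eqref{direlenti21}; the non-reversible case follows by the same arguments after replacing $P$ by its time-reversal wherever a $\mu$-density of $\nu P$ (or $\nu P^t$) is computed.

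\textbf{LSI $\Rightarrow$ MLSI.} Writing both $\cD_P(f,\log f)$ and $\cD_P(\sqrt f,\sqrt f)$ via \eqref{direlenti21} as $\tfrac12\sum_{\sigma,\tau}\mu(\sigma)P(\sigma,\tau)(\cdots)$ and comparing summands, the claim reduces to the elementary pointwise inequality $(a-b)(\log a-\log b)\ge 2(\sqrt a-\sqrt b)^2$ for $a,b>0$, which follows from the logarithmic-mean bound $\frac{a-b}{\log a-\log b}\le\frac{a+b}{2}$. Summing this against the nonnegative weights $\mu(\sigma)P(\sigma,\tau)$ gives $\cD_P(f,\log f)\ge 2\,\cD_P(\sqrt f,\sqrt f)$, and combining with \eqref{direlenti3} yields the MLSI \eqref{direlenti4} with $\rho=2s$.

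\textbf{Entropy decay $\Rightarrow$ MLSI.} Both sides of \eqref{direlenti4} are $1$-homogeneous in $f$ (for a constant $c>0$ one has $\cD_P(f,\log(cf))=\cD_P(f,\log f)$ since $\cD_P(f,c)=0$, and $\ent(cf)=c\,\ent(f)$), so I may assume $\mu[f]=1$; then $\nu:=f\mu$ is a probability measure with $H(\nu\tc\mu)=\ent(f)$, and by reversibility and stationarity $\nu P=(Pf)\mu$ with $\mu[Pf]=1$, so $H(\nu P\tc\mu)=\ent(Pf)$ and the hypothesis \eqref{relent2} reads $\ent(Pf)\le(1-\delta)\ent(f)$. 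The one non-routine ingredient is the entropy-production identity
\begin{equation}
\cD_P(f,\log f)=\ent(f)-\ent(Pf)+H(\nu P\tc\nu),
\end{equation}
which is checked by expanding both sides: using $\scalar{f}{P\log f}_\mu=\scalar{Pf}{\log f}_\mu$ (reversibility) and $\mu[Pf]=\mu[f]$, each side equals $\mu[f\log f]-\mu[(Pf)\log f]$. Since $H(\nu P\tc\nu)\ge 0$, this gives $\cD_P(f,\log f)\ge\ent(f)-\ent(Pf)\ge\delta\,\ent(f)$, i.e.\ the MLSI with $\rho=\delta$.

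\textbf{Entropy decay $\Rightarrow$ mixing.} Fix $X_0\in\Omega$ and apply \eqref{relent2} iteratively to $\nu_0=\delta_{X_0}$: since $1-\delta\le e^{-\delta}$ and $H(\nu_0\tc\mu)=\log(1/\mu(X_0))\le\log(1/\mu_*)$, we get $H(\nu_0 P^t\tc\mu)\le e^{-\delta t}\log(1/\mu_*)$. Pinsker's inequality gives $\|\nu_0 P^t-\mu\|_{\textsc{tv}}^2\le\tfrac12\, e^{-\delta t}\log(1/\mu_*)$, which is at most $1/16$ as soon as $\delta t\ge\log 8+\log\log(1/\mu_*)$; taking $t=\lceil\delta^{-1}(\log 8+\log\log(1/\mu_*))\rceil\le 1+\delta^{-1}(\log 8+\log\log(1/\mu_*))$ and noting the bound is uniform over $X_0$ yields \eqref{reltmix}. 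The only step that is more than bookkeeping is the entropy-production identity in the second part, and I expect no genuine obstacle beyond keeping careful track of the normalization $\mu[f]=1$ and, for full generality, of the passage to the time-reversal in the non-reversible case.
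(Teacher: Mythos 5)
Your three computations are the standard ones, and since the paper itself gives no proof of this lemma (it simply points to Section 2 of \cite{BCPSV20}), your write-up is the natural self-contained argument. For reversible $P$ everything checks out: the log-mean bound does give $(a-b)(\log a-\log b)\ge 2(\sqrt a-\sqrt b)^2$ (in fact the optimal constant is $4$, but $2$ is all the statement claims), the entropy-production identity $\cD_P(f,\log f)=\ent(f)-\ent(Pf)+H(\nu P\tc\nu)$ expands correctly and $H(\nu P\tc\nu)\ge 0$ yields $\rho=\delta$, and the iteration of \eqref{relent2} plus Pinsker gives \eqref{reltmix}; the last part uses no reversibility at all.

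The one point you should repair is the reversibility caveat, which the paper flags explicitly (``we have not assumed reversibility of $P$''). Your proposed reduction to the time-reversal does fix the second implication: without reversibility $\nu P=(P^*f)\mu$, and $\mu[f\,P\log f]=\mu[(P^*f)\log f]$ holds by the very definition of the adjoint, so $\cD_P(f,\log f)=\ent(f)-\ent(P^*f)+H(\nu P\tc\nu)\ge\delta\,\ent(f)$ exactly as in your reversible computation. But it does not touch the first implication, where no density of $\nu P$ appears and your argument genuinely uses the symmetric representation \eqref{direlenti21}: for non-reversible $P$ the pointwise comparison of summands only controls the Dirichlet form of the additive symmetrization $\tfrac12(P+P^*)$ paired with $\log f$, not $\cD_P(f,\log f)$ itself. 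The statement is still true in full generality, but the argument must be changed: write
\begin{equation}
\cD_P(f,\log f)=\sum_{\sigma,\tau}\mu(\sigma)P(\sigma,\tau)\,f(\sigma)\bigl(\log f(\sigma)-\log f(\tau)\bigr),
\end{equation}
apply the pointwise inequality $a\log(a/b)-a+b\ge(\sqrt a-\sqrt b)^2$ with $a=f(\sigma)$, $b=f(\tau)$, and use that \emph{both} marginals of the weight $\mu(\sigma)P(\sigma,\tau)$ equal $\mu$ (stationarity), so the linear terms cancel and the right-hand side sums to $2\,\cD_P(\sqrt f,\sqrt f)$. This gives $\cD_P(f,\log f)\ge 2\,\cD_P(\sqrt f,\sqrt f)$ for any $P$ stationary for $\mu$, and LSI then yields $\rho=2s$ without any reversibility assumption.
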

We refer to e.g. \cite[Section 2]{BCPSV20} for a proof. Note that we have not assumed reversibility of $P$ in the above lemma.
If $(P,\mu)$ is reversible, then one can additionally show that the standard LSI with constant $s$ implies  the discrete time relative relative entropy decay with rate $\d=s$.
%
%
%

\subsection{Some basic properties of entropy}
To compute the relative entropy with respect to a pinned measure $\mu_\Lambda^\t$ it is convenient to use the notation
 \begin{align}\label{eq:def-ent-la}
\ent_\L( f) = \mu_\L\left[f  \log \left(f/\mu_\L[f]\right)\right],
\end{align}
with the understanding that if we evaluate the left hand side at a given pinning $\t$ on $\L^c=V\setminus \L$ we then evaluate the expectations in the right hand side  with respect to $\mu_\L^\t$. To emphasize the dependence on the pinning we sometimes write $\ent^\t_\L( f) $. The expectation $\mu[\Ent_\L f]$ is obtained by averaging with respect to $\mu$ over the pinning $\t$ on $\L^c$, and satisfies
\begin{align}\label{eq:def-ent-la1}
\mu[\ent_\L( f)] = \sum_{\t\in\O_{\L^c}}\mu(\si_{\L^c}=\t)\,\ent^\t_\L( f)=\mu\left[f  \log \left(f/\mu_\L[f]\right)\right].
\end{align}
The following lemma summarizes a standard decomposition of the relative entropy; see e.g.\  \cite[Lemma 3.1]{CP20} for a proof.
\begin{lemma}\label{lem:telescope} \
	For any $\L\subset V$, for any $f: \Omega \to \R_+$:
	\begin{align}\label{deco}
	\ent (f) = \mu\left[\ent_{\L} (f)\right]
	+ \ent\,(\mu_\L[f]).
	\end{align}
	More generally, for any 
	$\L_0\subset \L_1\subset\cdots\subset \L_w\subset V$, for any $f: \Omega \to \R_+$:
	\begin{align}\label{tele}
	\sum_{i=1}^{w}\mu\left[\ent_{\L_{i}} (\mu_{\L_{i-1}}[f])\right]
	=\mu\left[\ent_{\L_w}(\mu_{\L_0}[f])\right].
	\end{align}
\end{lemma}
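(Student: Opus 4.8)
The plan is to derive the two-term splitting \eqref{deco} directly from the definition of entropy together with the tower property of conditional expectation, and then to iterate it to obtain the telescoping identity \eqref{tele}. For the first part I would expand everything in terms of $\mu[f\log f]$. Setting $g=\mu_\L[f]$, observe that $g$ is a function of $\si_{\L^c}$ only and that $\mu[g]=\mu[\mu_\L[f]]=\mu[f]$, so that $\ent(\mu_\L[f])=\mu[g\log g]-\mu[f]\log\mu[f]$. On the other hand, averaging the pointwise identity $\ent^\t_\L(f)=\mu^\t_\L[f\log f]-\mu^\t_\L[f]\log\mu^\t_\L[f]$ over the pinning $\t$ on $\L^c$ according to $\mu$, as in \eqref{eq:def-ent-la1}, gives $\mu[\ent_\L(f)]=\mu[f\log f]-\mu[g\log g]$. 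Adding the two displays, the $\mu[g\log g]$ terms cancel and what remains is precisely $\ent(f)=\mu[f\log f]-\mu[f]\log\mu[f]$, which is \eqref{deco}. (Since $\Omega$ is finite and $f\ge 0$, with the usual convention $0\log 0=0$, all sums are finite and there is nothing to check about convergence.)

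For \eqref{tele} the extra ingredient I would isolate is the identity $\mu_\L[\mu_{\L'}[f]]=\mu_\L[f]$ whenever $\L'\subset\L$; this is again the tower property, since $\mu_\L[\,\cdot\,]$ is the conditional expectation given $\si_{\L^c}$ and $\si_{\L^c}$ is a coarsening of $\si_{(\L')^c}$. I would then apply \eqref{deco} to the nonnegative function $\mu_{\L_{i-1}}[f]$ with the set $\L_i$, for each $i\in\{1,\dots,w\}$; using $\mu_{\L_i}[\mu_{\L_{i-1}}[f]]=\mu_{\L_i}[f]$ the second term of \eqref{deco} becomes $\ent(\mu_{\L_i}[f])$, so that $\mu[\ent_{\L_i}(\mu_{\L_{i-1}}[f])]=\ent(\mu_{\L_{i-1}}[f])-\ent(\mu_{\L_i}[f])$. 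Summing over $i=1,\dots,w$, the right-hand side telescopes to $\ent(\mu_{\L_0}[f])-\ent(\mu_{\L_w}[f])$. Applying \eqref{deco} one final time, now to the function $\mu_{\L_0}[f]$ with the set $\L_w$ (and $\mu_{\L_w}[\mu_{\L_0}[f]]=\mu_{\L_w}[f]$), rewrites this difference as $\mu[\ent_{\L_w}(\mu_{\L_0}[f])]$, which is \eqref{tele}.

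Since the statement is a purely algebraic identity, I do not expect a genuine obstacle; the only thing to be careful about is the bookkeeping of which coordinates each conditional expectation integrates out — in particular that $\mu_\L[f]$ depends only on $\si_{\L^c}$ — and the repeated collapsing of nested conditional expectations via the tower property. One could alternatively prove \eqref{tele} by a short induction on $w$ with \eqref{deco} as the inductive step, but the telescoping bookkeeping above seems slightly more transparent.
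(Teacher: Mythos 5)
Your proof is correct: the expansion of $\mu[\ent_\L(f)]$ and $\ent(\mu_\L[f])$ via the definition of entropy together with the tower property $\mu_\L[\mu_{\L'}[f]]=\mu_\L[f]$ for $\L'\subset\L$ is exactly the standard argument behind this decomposition, which the paper does not reprove but delegates to \cite[Lemma 3.1]{CP20}. Your telescoping derivation of \eqref{tele} from repeated applications of \eqref{deco} is likewise sound and matches the intended route.
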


The following monotonicity property of the entropy functional 
is an immediate consequence of the previous lemma.

\begin{lemma}\label{lem:mono}
	For all $A\subset B \subset V$,
	\begin{gather}
	\label{eq:prelim:b1}
	\mu[\ent_A(f)]\leq\mu[\ent_B(f)]\,.
	\end{gather}
\end{lemma}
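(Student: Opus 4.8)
The plan is to derive the monotonicity inequality $\mu[\ent_A(f)]\leq\mu[\ent_B(f)]$ directly from the decomposition of relative entropy in Lemma~\ref{lem:telescope}, applied at the level of the conditional measures rather than globally. The key observation is that for $A\subset B\subset V$, conditioning on a pinning $\t$ on $B^c=V\setminus B$ and then applying the first identity \eqref{deco} of Lemma~\ref{lem:telescope} \emph{inside} the conditional measure $\mu_B^\t$, with the inner block taken to be $A$, expresses $\ent_B^\t(f)$ as the sum of $\mu_B^\t[\ent_A(f)]$ and a nonnegative remainder term.

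Concretely, first I would fix $\t\in\O_{B^c}$ and work with the measure $\mu^\t=\mu_B^\t$ on $\O^\t$. Since $A\subset B$, the set $B\setminus A$ is a subset of the ``free'' vertices under $\mu_B^\t$, and applying \eqref{deco} with the ambient measure $\mu_B^\t$ in place of $\mu$ and the block $A$ in place of $\L$ gives
\begin{equation}
\ent_B^\t(f) = \mu_B^\t\!\left[\ent_A(f)\right] + \ent_B^\t\!\left(\mu_A[f]\right).
\end{equation}
Here $\mu_B^\t[\ent_A(f)]$ denotes the average over pinnings on $B\setminus A$ (drawn from $\mu_B^\t$) of the conditional entropy $\ent_A(f)$, exactly as in \eqref{eq:def-ent-la1} but relativized to $\mu_B^\t$. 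The second term $\ent_B^\t(\mu_A[f])$ is an entropy of a nonnegative function and is therefore $\geq 0$. Hence $\ent_B^\t(f)\geq \mu_B^\t[\ent_A(f)]$ pointwise in $\t$.

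Finally I would average this inequality over $\t$ distributed according to $\mu$ on $B^c$. The left side averages to $\mu[\ent_B(f)]$ by definition \eqref{eq:def-ent-la1}. For the right side, averaging $\mu_B^\t[\ent_A(f)]$ over $\t\sim\mu(\si_{B^c}=\cdot)$ reconstitutes the average of $\ent_A(f)$ over pinnings on $A^c=V\setminus A$ with respect to $\mu$ — this is just the tower property of conditional expectation, since first conditioning on $B^c$ and then on $B\setminus A$ is the same as conditioning on $A^c$ — so it equals $\mu[\ent_A(f)]$. This yields $\mu[\ent_A(f)]\leq\mu[\ent_B(f)]$, as claimed. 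There is no real obstacle here; the only point requiring a little care is bookkeeping the nesting of conditional expectations and checking that the ``relativized'' version of \eqref{deco} is legitimate, which it is because Lemma~\ref{lem:telescope} is stated for an arbitrary totally-connected spin system and $\mu_B^\t$ is again of that form.
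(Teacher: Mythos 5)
Your proof is correct and is essentially the argument the paper has in mind: Lemma~\ref{lem:mono} is stated as an immediate consequence of the entropy decomposition in Lemma~\ref{lem:telescope}, and applying \eqref{deco} relativized to $\mu_B^\t$ with inner block $A$, discarding the nonnegative term $\ent_B^\t(\mu_A[f])$, and averaging over $\t$ via the tower property is exactly such a derivation. No gaps.
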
	

Next, we recall the definition of general block factorization of entropy.
\begin{definition}\label{def:GBF}
The spin system is said to satisfy the {\em general
block factorization of entropy} with constant $C$ if for all $f\geq 0$, for all probability distribution $\a$ over subsets of $V$,
\begin{equation}
\label{eqn:block-factorization-def}
  \delta(\alpha)\Ent f \leq C\sum_{B\subset V}\alpha_B\,\mu[\Ent_B{f}],
\end{equation}
where $\d(\a)=\min_{x\in V} \sum_{B:\,B\ni x}\a_B$.
\end{definition}

We will often consider independent sets $\L$ of $V$, that is sets of vertices whose induced subgraph in $G$ has no edge; in those cases, $\mu_\L$ is a product measure $\mu_\L=\otimes_{x\in\L}\mu_x$ and the following lemma will be useful.
\begin{lemma}\label{lem:shearer}
	Fix $\L\subset V$ and 
	suppose that $\mu_\L$ is a product measure on $\mu_\L=\otimes_{x\in\L}\mu_x$.
	Then, for any distribution $\a$ over the subsets of $\L$, and any $f: \Omega \to \R_+$:
	\begin{equation}\label{shearer}
	\d( \a)\,\ent_\L(f)\leq \sum_{B\subset \L}\a_B\, \mu_\L\!\left[\ent_B (f)\right],
	\end{equation}
	that is $\mu_\L$ satisfies the general block factorization of entropy with constant $C=1$.
\end{lemma}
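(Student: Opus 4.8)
The plan is to reduce the statement to the classical Shearer inequality for entropy of product measures (see \cite{CMT14}). Since $\L$ is fixed throughout, and $\mu_\L = \otimes_{x\in\L}\mu_x$ is a product measure, the quantity $\ent_\L(f)$ evaluated at a pinning $\t$ on $\L^c$ is just the ordinary entropy of $f$ (viewed as a function of the coordinates in $\L$) with respect to this product measure, and likewise $\mu_\L[\ent_B(f)]$ is the average over the marginal of $\mu_\L$ on $\L\setminus B$ of the conditional entropy on $B$. So it suffices to prove the inequality pointwise in $\t$, i.e.\ to prove it for an arbitrary product measure on $[q]^\L$ in place of $\mu_\L^\t$. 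From now on I would suppress $\t$ and simply regard $\mu_\L$ as a product probability measure on a finite product space indexed by $\L$.

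First I would recall/restate the Shearer inequality in the form convenient here: if $\nu = \otimes_{x\in\L}\nu_x$ is a product measure on $\prod_{x\in\L}S_x$ and $\mathcal{F}$ is any family of subsets of $\L$ such that every $x\in\L$ lies in at least $k$ members of $\mathcal{F}$, then $k\,\Ent_\nu(f) \le \sum_{B\in\mathcal{F}}\nu[\Ent_B f]$ for every $f\ge 0$. I would then pass from the ``every vertex covered at least $k$ times'' formulation to the weighted formulation: given a probability distribution $\a$ over subsets of $\L$ with $\d(\a) = \min_{x\in\L}\sum_{B\ni x}\a_B$, one scales so that the inequality becomes $\d(\a)\,\Ent_\nu(f) \le \sum_{B\subset\L}\a_B\,\nu[\Ent_B f]$. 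This weighted version follows from the integer/unweighted version by a standard approximation-and-rescaling argument (approximate $\a$ by rationals, clear denominators to get a multiset of blocks, apply the integer Shearer bound, and renormalize), or it can be proved directly by the same subadditivity argument that proves Shearer's inequality. The cleanest route is to give the direct proof.

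For the direct proof I would use the entropy decomposition from Lemma~\ref{lem:telescope} together with the product structure. Enumerate $\L = \{x_1,\dots,x_m\}$ and, for each block $B\subset\L$, use the telescoping/chain-rule decomposition of $\Ent_B f$ along the vertices of $B$ into a sum of one-site conditional-entropy terms $\nu[\Ent_{x_j}(\nu_{\{x_1,\dots,x_{j-1}\}\cap B}[f])]$; the key point is that because $\nu$ is a product measure, the one-site conditional entropy at $x_j$ of a function depends only monotonically on which coordinates have been averaged out, and averaging out \emph{more} coordinates can only decrease it (this is exactly the monotonicity encoded in Lemma~\ref{lem:mono} applied inside the product measure). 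Hence each term in the decomposition of $\nu[\Ent_B f]$ is at least the ``fully conditioned'' one-site term $\nu[\Ent_{x_j} f]$ whenever $x_j\in B$. Summing over $B$ against the weights $\a_B$, the coordinate $x_j$ contributes at least $(\sum_{B\ni x_j}\a_B)\,\nu[\Ent_{x_j} f] \ge \d(\a)\,\nu[\Ent_{x_j} f]$, and adding over $j=1,\dots,m$ and using the full telescoping identity $\sum_j \nu[\Ent_{x_j}(\nu_{\{x_1,\dots,x_{j-1}\}}[f])] = \Ent_\nu(f)$ (the exact tensorization for product measures) gives $\sum_B \a_B\,\nu[\Ent_B f] \ge \d(\a)\,\Ent_\nu(f)$, which is \eqref{shearer}. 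Re-inserting the pinning $\t$ and averaging over $\t\sim\mu_{\L^c}$ yields the stated inequality, and the case $\a$ supported on singletons with $\d(\a)=1$ shows the constant $C=1$ is the general-block-factorization constant of $\mu_\L$.

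The main obstacle is the monotonicity step: making precise that, in a product measure, the one-site conditional entropy at $x_j$ of $\nu_{S}[f]$ is nonincreasing as $S$ grows, so that each block's decomposition dominates the corresponding term in the global telescoping of $\Ent_\nu(f)$. This is where the product structure of $\mu_\L$ is essential — it fails for general $\mu_\L$ — and it is exactly the content of (the product-measure case of) Lemma~\ref{lem:mono} combined with the chain rule of Lemma~\ref{lem:telescope}; the rest is bookkeeping with the weights $\a_B$.
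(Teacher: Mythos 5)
The paper itself gives no argument for Lemma~\ref{lem:shearer} beyond citing the weighted Shearer inequality of \cite{CP20}, so your plan to prove it directly is a legitimate alternative route, and its skeleton (reduce to a fixed pinning $\t$, telescope each block entropy along an enumeration of $\L$, compare term by term with the global chain rule, then sum against the weights $\a_B$) is the standard proof. However, as written the key comparison is wrong. In the decomposition $\mu_\L[\Ent_B f]=\sum_{x_j\in B}\mu_\L[\Ent_{x_j}(\mu_{B\cap\{x_1,\dots,x_{j-1}\}}(f))]$ you assert that each term is at least the fully conditioned one-site term $\mu_\L[\Ent_{x_j}(f)]$. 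By the very monotonicity you invoke (averaging over \emph{more} coordinates can only decrease the one-site entropy) that inequality goes the other way: the block term has the coordinates in $B\cap\{x_1,\dots,x_{j-1}\}$ averaged out while $\Ent_{x_j}(f)$ has none, so the block term is at most, not at least, $\mu_\L[\Ent_{x_j}(f)]$. Your concluding step then invokes the identity $\sum_j\mu_\L[\Ent_{x_j}(\mu_{\{x_1,\dots,x_{j-1}\}}(f))]=\Ent_\L(f)$, which does not match the quantity $\sum_j\mu_\L[\Ent_{x_j}(f)]$ that your bound produces. The correct term-by-term comparison, and the one the telescoping identity actually needs, is
\[
\mu_\L\big[\Ent_{x_j}\big(\mu_{B\cap\{x_1,\dots,x_{j-1}\}}(f)\big)\big]\;\geq\;\mu_\L\big[\Ent_{x_j}\big(\mu_{\{x_1,\dots,x_{j-1}\}}(f)\big)\big],
\]
valid because $B\cap\{x_1,\dots,x_{j-1}\}\subset\{x_1,\dots,x_{j-1}\}$, so the right-hand side averages over a larger prefix; summing over $x_j\in B$ with weights $\a_B$, using $\sum_{B\ni x_j}\a_B\geq\d(\a)$, and applying the chain-rule identity then gives \eqref{shearer} exactly as you intend.

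A second, smaller imprecision: the monotonicity you need is not Lemma~\ref{lem:mono}, which compares $\Ent_A(f)$ and $\Ent_B(f)$ for the \emph{same} function $f$, but precisely inequality \eqref{prodest} of Lemma~\ref{lem:prod:fact} (used together with \eqref{prod} and Lemma~\ref{lem:telescope}); that is where the product structure of $\mu_\L$ enters, via the variational principle \eqref{varprin}, and it is indeed the step that fails for non-product measures. With these two repairs your direct proof closes and is a self-contained alternative to the paper's citation of \cite{CP20}.
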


The above statement is a consequence of the weighted Shearer inequality for the Shannon entropy; see Lemma 4.2 in~\cite{CP20}.
The following properties will also be used. 
\begin{lemma}\label{lem:prod:fact} \
Let $\L=A\cup B$	
and assume that $\mu_\L$ is a product $\mu_\L= \mu_A\otimes \mu_B$.
	Then, for all 
	$f\geq 0$:
	\begin{align}\label{prod}
	\ent_{\L} (\mu_B(f)) = \mu_\L[\ent_{A} (\mu_B(f))],
	\end{align}
	and for all $U\subset B$,
	\begin{align}\label{prodest}
	\mu_\L[\ent_{A} (\mu_B(f))]\leq \mu_\L[\ent_{A} (\mu_U(f))].
	\end{align}

\end{lemma}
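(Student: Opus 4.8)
The plan is to prove Lemma~\ref{lem:prod:fact} by directly applying the product structure of $\mu_\L$ together with the basic entropy decomposition from Lemma~\ref{lem:telescope}. Throughout, write $g = \mu_B(f)$, which is a nonnegative function that no longer depends on the coordinates in $B$; more precisely, evaluated at a pinning $\t$ on $\L^c$, the function $g$ is a function on $\Omega^\t_\L$ that is constant along the $B$-directions. The first task is to establish the identity \eqref{prod}. Since $\mu_\L = \mu_A\otimes\mu_B$, the telescoping identity \eqref{deco} applied with the inner set $A\subseteq \L$ and the measure $\mu_\L$ gives
\begin{align}
\ent_\L(g) = \mu_\L[\ent_A(g)] + \ent_\L(\mu_A[g]).
\end{align}
So \eqref{prod} is equivalent to the claim that the second term vanishes, i.e. $\ent_\L(\mu_A[g]) = 0$. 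But $\mu_A[g] = \mu_A[\mu_B(f)]$, and because $\mu_\L$ is a product measure, $\mu_A\mu_B = \mu_\L$ acting on functions on $\Omega_\L$; hence $\mu_A[g] = \mu_\L[f]$ is a constant (it depends only on the pinning $\t$, not on the free coordinates in $\L$). The entropy of a constant function is zero, so $\ent_\L(\mu_A[g]) = 0$ and \eqref{prod} follows.

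For \eqref{prodest}, fix $U\subseteq B$ and apply the monotonicity of conditional entropy. Here I would invoke Lemma~\ref{lem:mono}: since $g = \mu_B(f)$ does not depend on the coordinates in $B$, and in particular not on those in $U\subseteq B$, one has $\mu_U(g) = g$, hence $\ent_A(\mu_U(g)) = \ent_A(g)$ trivially. That is not quite the right comparison though; the target inequality compares $\mu_\L[\ent_A(\mu_B(f))]$ with $\mu_\L[\ent_A(\mu_U(f))]$, where on the right the inner average is only over $U$, not over all of $B$. The correct route is: by the tower property and the product structure, $\mu_B(f) = \mu_{B\setminus U}(\mu_U(f))$ (using that $\mu_B = \mu_{B\setminus U}\otimes\mu_U$ when restricted to functions on $\Omega_\L$, which holds because $\mu_\L$ is product and hence so is $\mu_B$). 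Writing $h = \mu_U(f)$, we must show $\mu_\L[\ent_A(\mu_{B\setminus U}(h))] \le \mu_\L[\ent_A(h)]$. This is exactly convexity/monotonicity of the entropy functional under conditional expectation: applying the decomposition \eqref{deco} of $\ent_A$-type quantities, or more directly Lemma~\ref{lem:mono} after noting $\ent_A(\mu_{B\setminus U} h)$ averaged against $\mu_\L$ equals the entropy $\mu_\L[\ent_{A}(\mu_{B\setminus U}h)]$ while $\mu_\L[\ent_A(h)] \ge \mu_\L[\ent_{A\cup(B\setminus U)}(h)] \ge \mu_\L[\ent_A(\mu_{B\setminus U}(h))]$. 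The cleanest phrasing uses Jensen's inequality for the convex function $t\mapsto t\log t$: $\ent_A(\mu_{B\setminus U} h) \le \mu_{B\setminus U}[\ent_A(h)]$ pointwise (conditional on the $B\setminus U$-coordinates being averaged), and then average against $\mu_\L$.

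The main obstacle, such as it is, is bookkeeping: being careful about which coordinates each conditional expectation integrates over and making sure the product structure $\mu_\L = \mu_A\otimes\mu_B$ is used in the right places (it is what lets us factor $\mu_B = \mu_{B\setminus U}\otimes\mu_U$ and identify $\mu_A[g]$ with the global average). There is no analytic difficulty beyond Jensen's inequality and the already-established decomposition lemmas; the only genuine content is that the product structure collapses the "cross" entropy term in \eqref{prod} to zero, and that conditioning can only decrease entropy. I would write the proof in three short paragraphs matching the structure above: (i) reduce \eqref{prod} to $\ent_\L(\mu_A[g])=0$ via \eqref{deco} and observe $\mu_A[\mu_B(f)] = \mu_\L[f]$ is constant; (ii) for \eqref{prodest} factor $\mu_B = \mu_{B\setminus U}\otimes\mu_U$ and set $h=\mu_U(f)$; (iii) conclude by Jensen/monotonicity that $\ent_A(\mu_{B\setminus U}h)\le \mu_{B\setminus U}[\ent_A(h)]$ and average.
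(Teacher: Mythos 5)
Your proof of \eqref{prod} is correct and is essentially the paper's argument: apply the decomposition of Lemma~\ref{lem:telescope} inside $\L$ with inner block $A$, and use the product structure to identify $\mu_A[\mu_B(f)]=\mu_\L[f]$, whose entropy vanishes.

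Your proof of \eqref{prodest}, however, has a genuine gap. You assert that $\mu_B=\mu_{B\setminus U}\otimes\mu_U$ and hence $\mu_B(f)=\mu_{B\setminus U}(\mu_U(f))$, ``because $\mu_\L$ is product and hence so is $\mu_B$.'' This does not follow: the hypothesis $\mu_\L=\mu_A\otimes\mu_B$ only asserts independence \emph{between} the blocks $A$ and $B$; it says nothing about correlations \emph{within} $B$, and in general $\mu_{B\setminus U}\mu_U\neq\mu_B$ (a two-step block update is not the full conditional average unless the two sub-blocks are conditionally independent). This failure is not hypothetical: in the application of the lemma in the proof of Lemma~\ref{lem:ubf:boosting} one takes $A=V_j\cap S_i$, $B=S_{<i}$, $U=V_j\cap S_{<i}$, and within each connected component of $S_{<i}$ the spins on $U$ and on $B\setminus U$ are adjacent and correlated, so $\mu_B$ is emphatically not a product over $U$ and $B\setminus U$. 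Consequently your Jensen step, even if valid as stated, bounds $\mu_\L[\ent_A(\mu_{B\setminus U}\mu_U f)]$, which is not the left-hand side of \eqref{prodest}. (The intermediate chain you wrote also has the inequality $\mu_\L[\ent_A(h)]\geq\mu_\L[\ent_{A\cup(B\setminus U)}(h)]$ pointing the wrong way; monotonicity, Lemma~\ref{lem:mono}, gives the reverse.)

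The repair is close to what you attempted, and is in spirit what the paper does. Use only the tower identity $\mu_B(f)=\mu_B\mu_U(f)$ (valid since $U\subset B$, no product structure needed), set $h=\mu_U(f)$, and then show
\begin{align}
\mu_\L\left[\ent_A(\mu_B h)\right]\leq \mu_\L\left[\ent_A(h)\right].
\end{align}
Here the product structure enters in an essential but different way: it guarantees that the averaging operator $\mu_B$ does not depend on the $A$-coordinates (and commutes with $\mu_A$), so one may either apply Jensen's inequality for the convex functional $g\mapsto\ent_A(g)$ with the mixing measure $\mu_B$, or, as the paper does, rewrite $\ent_A(\mu_B f)$ using the commutation $\mu_A\mu_B\mu_U=\mu_B\mu_A\mu_U$ and conclude with the variational principle \eqref{varprin} (which is exactly the duality underlying that convexity). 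With $\mu_{B\setminus U}$ replaced by $\mu_B$ your Jensen argument goes through; as written, it rests on a false identity.
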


\begin{proof}
From the decomposition in Lemma \ref{lem:telescope} it follows that
$$
\ent_{\L} (\mu_B(f))- \mu_\L[\ent_{A} (\mu_B(f))] = \ent_{\L} (\mu_A\mu_B(f)) = \ent_{\L} (\mu_\L(f)) =0.
$$
This proves \eqref{prod}. To prove \eqref{prodest} notice that	 by definition
\begin{align*}
	\mu_\L\left[\ent_A (\mu_B(f))\right]
	&= \mu_\L\left[\mu_B(f)\log\left(\frac{\mu_B(f)}{\mu_A\mu_B(f))}\right)\right].
	\end{align*}
For any $U\subset B$, $\mu_B(f)=\mu_B\mu_U(f)$ and the product structure $\mu_\L=\mu_A\otimes\mu_B$ implies the commutation relation $\mu_A\mu_B \mu_U=\mu_B\mu_A \mu_U $. Therefore,
\begin{align*}
	\mu_\L\left[\ent_A (\mu_B(f))\right]
	&= \mu_\L\left[\mu_B\mu_U (f)\log\left(\frac{\mu_B\mu_U (f)}{\mu_B\mu_A\mu_U (f)}\right)\right] \\ &= \mu_\L\left[\mu_U (f)\log\left(\frac{\mu_B\mu_U (f)}{\mu_B\mu_A\mu_U (f))}\right)\right]\\ &=
	\mu_\L\left[\mu_A \left[\mu_U (f)\log\left(\frac{\mu_B\mu_U (f)}{\mu_A\mu_B\mu_U (f)}\right)\right]\right].
	\end{align*}
It remains to observe that
$$
\mu_A \left[\mu_U (f)\log\left(\frac{\mu_B\mu_U (f)}{\mu_A\mu_B\mu_U (f)}\right)\right]
\leq \Ent_A(\mu_U (f)).
 $$
 The latter estimate follows from the well known variational principle
 \begin{align}\label{varprin}
\Ent_A(g)	=\sup\,\{\mu_A(gh)\,,\; \mu_A(e^h)\leq 1\}
\end{align}
valid for any $A$ and any function $g\geq0$; see, e.g.\ \cite[Proposition 2.2]{Ledoux}.
\end{proof}

\subsection{Implications of block factorization}
Fix a probability distribution $\a$ over subsets of $V$ and observe that the $\a$-weighted heat bath block dynamics defined in the introduction is the Markov chain with transition matrix $P_\a$ on $\Omega$ such that for any real function $f$
\begin{align}\label{alphaweighted}
	P_\a f=\sum_{B\subset V}\a_B \,\mu_B (f)\,.
	\end{align}
To clarify the above notation, if we evaluate the left hand side at a spin configuration $\sigma\in\Omega$ then each for each $B$ the term $\mu_B f$ in the right hand side is given by $\mu_B^\t f$
where $\t=\sigma_{V\setminus B}$. If $\a_B=n^{-1}\,\IND(|B|=1)$, then \eqref{alphaweighted} is the Glauber dynamics for $\mu$.

The $\a$-weighted heat bath block dynamics \eqref{alphaweighted} defines a reversible pair $(P_\a,\mu)$. Moreover, its Dirichlet form satisfies
\begin{align}\label{alphadir}
	\cD_\a(f,g)=\sum_{B\subset V}\a_B \,\mu[f(1-\mu_B)g] = \sum_{B\subset V}\a_B \,\mu\left[\cov_B(f,g)\right] \,,
	\end{align}
	where $\cov_B(f,g)=\mu_B\left[(f-\mu_B f)(g-\mu_B g)\right]$ denotes the covariance functional.

 \begin{lemma}
\label{lem:gbf}
If the spin system satisfies the general block factorization with constant $C$ then for all $\a$ the Markov chain $(P_\a,\mu)$ satisfies
\begin{enumerate}
\item the modified log-Sobolev inequality with constant $\rho = \frac{\delta(\alpha)}{C}$;
\item the discrete time relative entropy decay with rate $\d = \frac{\delta(\alpha)}{C}$;
\item
 $T_{\mathrm{mix}}(P_\a) \le 1+ \frac{C}{\delta(\alpha)} [\log(8)+\log\log(1/\mu_*)]
,$ where $\mu_* = \min_{\si\in\Omega} \mu(\si)$.
\end{enumerate}
\end{lemma}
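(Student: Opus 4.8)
The plan is to reduce all three items to a single inequality: the entropy decay at rate $\delta(\alpha)/C$ along one step of the $\alpha$-weighted heat-bath block dynamics. Once that is established, items (1)--(3) follow from the abstract facts already collected in Lemma~\ref{lem:mix}. Indeed, the discrete-time relative entropy decay \eqref{relent2} with rate $\delta = \delta(\alpha)/C$ immediately gives the MLSI with constant $\rho = \delta(\alpha)/C$ (item (1)) and the mixing-time bound \eqref{reltmix} with the same rate (item (3)); item (2) is the statement of entropy decay itself. So the real content is just item (2), and everything else is bookkeeping.

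To prove entropy decay, I would take an arbitrary distribution $\nu$ on $\Omega$ and write $\nu = f\mu$ with $f \geq 0$, $\mu[f]=1$, so that $H(\nu\tc\mu) = \Ent(f)$. The key identity is the standard one relating relative entropy after one step to a sum of conditional entropies: since $P_\alpha f = \sum_B \alpha_B \mu_B(f)$ as in \eqref{alphaweighted}, and entropy is convex (so $H(\nu P_\alpha\tc\mu) = \Ent(P_\alpha f) \le \sum_B \alpha_B \Ent(\mu_B f)$ would be one route, but the sharper and standard route is via the telescoping decomposition of Lemma~\ref{lem:telescope}), one gets for each block $B$ the decomposition $\Ent(f) = \mu[\Ent_B(f)] + \Ent(\mu_B f)$. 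Averaging over $B$ with weights $\alpha_B$ yields
\begin{align}
\Ent(f) = \sum_{B\subset V}\alpha_B\,\mu[\Ent_B(f)] + \sum_{B\subset V}\alpha_B\,\Ent(\mu_B f).
\end{align}
Now $\Ent(P_\alpha f) \le \sum_B \alpha_B \Ent(\mu_B f)$ by convexity of $\Ent$ together with $\sum_B\alpha_B = 1$, so
\begin{align}
\Ent(P_\alpha f) \le \Ent(f) - \sum_{B\subset V}\alpha_B\,\mu[\Ent_B(f)].
\end{align}
Applying the general block factorization hypothesis \eqref{eqn:block-factorization-def}, namely $\sum_B\alpha_B\,\mu[\Ent_B(f)] \ge \frac{\delta(\alpha)}{C}\Ent(f)$, gives $\Ent(P_\alpha f) \le (1 - \delta(\alpha)/C)\Ent(f)$, i.e. $H(\nu P_\alpha\tc\mu) \le (1-\delta(\alpha)/C)H(\nu\tc\mu)$. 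This is exactly \eqref{relent2} with $\delta = \delta(\alpha)/C$.

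The only mild subtlety I anticipate is justifying the convexity step $\Ent(P_\alpha f)\le\sum_B\alpha_B\Ent(\mu_B f)$ cleanly: this follows because $\Ent$ is a convex functional of $f$ (it is a supremum of linear functionals, cf.\ the variational principle \eqref{varprin} used in the proof of Lemma~\ref{lem:prod:fact}), $\sum_B\alpha_B=1$, and $P_\alpha f$ is the convex combination $\sum_B\alpha_B\mu_B f$. One should be slightly careful that $\mu_B f$ is itself a legitimate nonnegative function with $\mu[\mu_B f]=\mu[f]=1$, which holds since $\mu_B$ is a conditional expectation. There are no further obstacles — Lemma~\ref{lem:mix} is assumed and carries the weight of passing from entropy decay to the MLSI and to the mixing-time bound, with $\mu_* = \min_{\sigma\in\Omega}\mu(\sigma)$ entering exactly as in \eqref{reltmix}. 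I would note that reversibility of $(P_\alpha,\mu)$, recorded around \eqref{alphadir}, is not even needed here since Lemma~\ref{lem:mix} does not assume it.
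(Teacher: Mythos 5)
Your proposal is correct and follows essentially the same route as the paper: reduce everything to the one-step entropy contraction via Lemma~\ref{lem:mix}, then combine the convexity bound $\Ent(P_\a f)\le\sum_B\a_B\Ent(\mu_B f)$ with the decomposition of Lemma~\ref{lem:telescope} and the block factorization hypothesis to get $\Ent(P_\a f)\le(1-\delta(\a)/C)\Ent(f)$. The only cosmetic difference is that the paper justifies the convexity step pointwise via $x\mapsto x\log x$ rather than via the variational principle \eqref{varprin}, which is an equivalent justification.
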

\begin{proof}
In view of Lemma \ref{reltmix} it is sufficient to prove item 2. We note that the relative entropy decay with rate $\d$ is equivalent to the entropy contraction
\begin{equation}
\label{eqn:contraction}
\Ent(P_\a f)\leq (1-\d) \Ent(f),
\end{equation}
for all $f\geq 0$. By convexity of $x\mapsto x\log x$ one has
\begin{align}
\label{eqn:contraction1}
\Ent(P_\a f)&=\mu[P_\a f\log(P_\a f)] - \mu[f]\log\mu[f] \nonumber \\
&\leq \sum_B \a_B\,\mu[\mu_B (f)\log(\mu_B (f))]- \mu[f]\log\mu[f] = \sum_B \a_B\Ent(\mu_B (f))].
\end{align}
From the decomposition in Lemma \ref{lem:telescope}  it follows that
\begin{align}
\label{eqn:contraction2}
\Ent(P_\a f)\leq \Ent(f) - \sum_B \a_B\mu[\Ent_B (f)].
\end{align}
By definition of block factorization we conclude
\begin{align}
\label{eqn:contraction3}
\Ent(P_\a f)\leq (1-\d(\a)/C)\Ent(f). &\qedhere
\end{align}
\end{proof}

\section{Uniform block factorization implies general block factorization}
\label{sec:ubf:to:gbf}
%

A key step in the proof of Theorem \ref{thm:block-factorization} is the proof that uniform block factorization (UBF) implies general block factorization (GBF).


We begin with the formal definition of UBF.
For a positive integer $\ell\leq n $, let $\binom{V}{\ell}$ denote the collection of all subsets of $V$ of size $\ell$.

\begin{definition}[Uniform Block Factorization (UBF)]\label{def:ubf}
	We say that the spins system  
	$\mu$ satisfies the \emph{$\ell$-uniform block factorization ($\ell$-UBF) of entropy} with constant $\Cellubf$ if for all $f: \Omega \to \R_{+}$
	\begin{equation}\label{eq:ubf}
	\frac{\ell}{n} \, \Ent (f) \le \Cellubf \cdot \frac{1}{\binom{n}{\ell}} \sum_{S\in \binom{V}{\ell}} \mu[\Ent_S(f)].
	\end{equation}
\end{definition}

In this section, we establish the following theorem.

\begin{theorem}\label{thm:UBF-GBF}
	For an arbitrary $b$-marginally bounded spin system on a graph of maximum degree~$\Delta$, if
	$\left \lceil{\theta n}\right \rceil $-UBF holds with constant $\Cellubf$ and $0<\theta \le \frac{b^{2(\Delta+1)}}{4e\Delta^2}$,
	then GBF holds with constant $\Cgbf=\Cubf\times O\left( (\theta\,b^2)^{-1}\log (1/b) \Delta^3\right)$.
\end{theorem}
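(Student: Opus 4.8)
The plan is to deduce GBF from UBF by a two-step reduction: first upgrade $\ell$-UBF (for a single scale $\ell=\lceil\theta n\rceil$) to $k$-partite factorization, and then show that $k$-partite factorization implies general block factorization for arbitrary weights $\alpha$. The key structural idea is that an arbitrary block $B$ can be decomposed into few independent sets (since $G$ has maximum degree $\Delta$, each block $B$ can be properly colored with at most $\Delta+1$ colors, partitioning $B$ into independent pieces), and on independent sets the conditional measure $\mu_B$ factorizes as a product, so Lemma~\ref{lem:shearer} and Lemma~\ref{lem:prod:fact} apply with constant $1$. Thus the role of UBF is to control entropy in terms of \emph{independent-set} blocks of a fixed large size, and the combinatorics of covering $V$ by random large independent sets converts this into a statement about arbitrary small blocks.

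\medskip

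\textbf{Step 1 (UBF $\Rightarrow$ $k$-partite factorization).} First I would fix $k\sim 1/\theta$ and consider a partition (or random partition) of $V$ into $k$ classes; the goal is an inequality of the form $\Ent(f)\le \Cgbf' \sum_{i}\mu[\Ent_{V\setminus V_i}(f)]$ or its dual form in terms of the $V_i$. Applying $\lceil\theta n\rceil$-UBF, each size-$\lceil\theta n\rceil$ set $S$ appears with the right frequency; one then needs to relate $\mu[\Ent_S(f)]$, for a typical large set $S$, to conditional entropies over the parts of a $k$-partition. The marginal-boundedness hypothesis $b$ enters here precisely because conditioning/averaging steps lose a factor polynomial in $1/b$ per vertex of ``overlap,'' and one must ensure these losses do not compound more than $b^{O(\Delta)}$-many times — this is why the threshold $\theta\le b^{2(\Delta+1)}/(4e\Delta^2)$ appears: it guarantees that a random large set, after removing a bounded-degree ``bad'' portion, still contains a large independent set, with the relevant union-bound/probabilistic estimate succeeding. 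I expect this to be exactly the step carried out in Lemma~\ref{lem:ubf:boosting} of the paper's roadmap.

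\medskip

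\textbf{Step 2 ($k$-partite factorization $\Rightarrow$ GBF).} Given the $k$-partite factorization, I would handle an arbitrary weight distribution $\alpha$ as follows. For each block $B$ in the support of $\alpha$, use a proper $(\Delta+1)$-coloring of $G[B]$ to write $B=\bigsqcup_{j=1}^{\Delta+1} I_j^B$ with each $I_j^B$ independent. By Lemma~\ref{lem:mono} and Lemma~\ref{lem:prod:fact}, $\mu[\Ent_{I_j^B}(f)]\le \mu[\Ent_{B}(f)]$, and conversely the telescoping identity of Lemma~\ref{lem:telescope} lets one bound $\mu[\Ent_B(f)]$ from below by a sum of contributions of the independent pieces (losing only constants, again controlled by $b$ via the variational principle \eqref{varprin}). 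Then the average $\sum_B \alpha_B\,\mu[\Ent_B(f)]$ dominates, up to a factor $O(\Delta)$, a weighted average of $\mu[\Ent_I(f)]$ over independent sets $I$, with total vertex-coverage still $\gtrsim \delta(\alpha)$. Finally, feeding this into the $k$-partite factorization (which in turn came from UBF) and using Lemma~\ref{lem:shearer} on each independent set to pass from size-$\ell$ independent sets down to the actual blocks yields $\delta(\alpha)\,\Ent f\le \Cgbf\sum_B\alpha_B\,\mu[\Ent_B f]$ with $\Cgbf=\Cubf\cdot O\big((\theta b^2)^{-1}\log(1/b)\,\Delta^3\big)$; the $\Delta^3$ collects one $\Delta$ from the coloring, one from a union-bound over colors, and one from the independent-set extraction, while the $\log(1/b)$ and $(\theta b^2)^{-1}$ track the entropy-comparison losses and the scale conversion $\ell=\lceil\theta n\rceil\leftrightarrow n$.

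\medskip

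\textbf{Main obstacle.} The delicate point is Step~1: quantitatively controlling how much is lost when one replaces a generic large set $S$ by an independent subset, and when one re-expresses $\mu[\Ent_S(f)]$ through conditional entropies on a $k$-partition. The naive bound loses a factor $1/b$ for every vertex dropped or re-conditioned, which would be catastrophic ($b^{-\Omega(n)}$); the whole point is to arrange the decomposition so that only $O(\Delta)$ such losses occur globally (not per vertex), which forces the specific smallness condition on $\theta$ and a careful probabilistic argument showing a random large set contains, with positive probability, a large independent set meeting every part of the $k$-partition in the right proportion. Making that combinatorial/probabilistic estimate tight — rather than merely qualitative — is where the real work lies; everything else (the coloring decomposition, telescoping, Shearer, the variational principle) is standard manipulation of the entropy functional using the lemmas already established.
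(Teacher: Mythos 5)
Your high-level roadmap matches the paper's: the theorem is indeed proved by composing Lemma~\ref{lem:ubf:boosting} (UBF implies $k$-partite factorization) with Lemma~\ref{lem:kpart:gen} ($k$-partite factorization implies GBF at the cost of a factor $k\le\Delta+1$), and your Step~2 is essentially the paper's argument: Shearer's inequality (Lemma~\ref{lem:shearer}) applied on each color class $V_j$ of a \emph{global} proper coloring of $G$, where $\mu_{V_j}^\tau$ is a product measure, with the induced weights $\hat\alpha_U\propto\sum_{B}\alpha_B\IND(V_j\cap B=U)$, combined with the monotonicity $\mu[\Ent_{V_j\cap B}(f)]\le\mu[\Ent_B(f)]$. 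Note that this step incurs no $b$-dependent loss and needs no per-block coloring; only the factor $k\le\Delta+1$ from summing over color classes.

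Step~1, however, is where all the work of the theorem lies, and there your sketch has a genuine gap. First, the $k$ in the $k$-partite factorization is $k\le\Delta+1$ (the color classes of $G$), not $k\sim 1/\theta$. More importantly, the mechanism you propose --- that a random set $S$ of size $\lceil\theta n\rceil$ contains, after discarding a bad portion, a large independent set, with only $O(\Delta)$ entropy-comparison losses ``globally'' --- is not how the reduction works and would not go through: monotonicity only gives $\mu[\Ent_I(f)]\le\mu[\Ent_S(f)]$ for $I\subset S$, which is the wrong direction, and the cost of comparing $\Ent_{S_i}$ to the entropies of the pieces $V_j\cap S_i$ is paid \emph{per vertex within each connected component $S_i$ of $S$}, growing like $z^{|S_i|}$ with $z=1/b^2$ (this is the constant $\Gamma(S_i)\le\zeta|S_i|^3z^{|S_i|}$, $\zeta=O(\log(1/b)/b^2)$, imported from Lemma~4.2 of \cite{CLV20}). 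The paper's argument therefore decomposes $S$ into connected components, exploits the product structure across components (Lemmas~\ref{lem:telescope} and \ref{lem:prod:fact}), and tames the exponential cost by a percolation-type tail estimate, conditional on $V_j\cap S$, for the component of $v$ in the square graph $G_2$ (maximum degree $\Delta^2$): $\bbP(|T_2(v)|=m)\le\frac{\ell}{n}(2e\Delta^2\theta)^{m-1}$. The hypothesis $\theta\le\frac{b^{2(\Delta+1)}}{4e\Delta^2}$ is exactly what makes $2e\Delta^2\theta z^{\Delta+1}\le 1/2$, so that $\bbE\left[|S(v)|^3z^{|S(v)|}\right]=O(\Delta^2)$; the point is not that losses are globally $O(\Delta)$, but that components are small with exponentially decaying probability, so the exponentially large comparison constants are bounded in expectation. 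None of this --- the component decomposition, the exponential-in-component-size comparison constant, the conditional tail estimate in $G_2$ --- appears in your proposal, so the core of the proof is missing.
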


As it will become apparent in our proof of this theorem, there is a trade-off between the upper bound for $\theta$ and the value we can deduce for $\Cgbf$; in particular, we could allow for UBF to hold for larger $\theta < 1$ (i.e., with a better dependence on $\Delta$) at the expense of an additional factor depending on $\Delta$ in $\Cgbf$. 

We will later show that spectral independence implies $\left \lceil{\theta n}\right \rceil $-UBF for any desired constant $\theta>0$; see Theorem~\ref{th:reform}.
These two results combined imply Theorem~\ref{thm:block-factorization} from the introduction:
\begin{proof}[Proof of Theorem~\ref{thm:block-factorization}]
	In Theorem~\ref{th:reform} we establish that 
	for a spin system that is $\eta$-spectrally independent and $b$-marginally bounded, 
	$\left \lceil{\theta n}\right \rceil $-UBF holds with constant $\Cellubf = (\frac{1}{\theta})^{O(\frac{\eta}{b})}$. Then, taking $\theta=\frac{b^{2(\Delta+1)}}{4e\Delta^2}$, Theorem~\ref{thm:UBF-GBF} implies 
	that GBF holds with constant 
	$$
	\Cgbf=O\left( \frac{4e\Delta^5}{b^{2(\Delta+2)}}\log (1/b) \right)\times \left( \frac{4e\Delta^2}{b^{2(\Delta+1)}} \right) ^{O\(\frac{\eta}{b}\)} = 
	\left(\frac{2}{b}\right)^{O\(\Delta(1+\frac{\eta}{b})\)},
	$$
	as claimed.
\end{proof}

We turn to the proof of Theorem \ref{thm:UBF-GBF}. Recall that a graph $G$ of maximum degree $\Delta$ is $k$-partite, with $k\leq \Delta + 1.$ Let $\{V_1,...,V_k\}$ denote the independent sets $V_i \subset V$ corresponding to a $k$-partition of $G$.
A key step in the proof of Theorem~\ref{thm:UBF-GBF} is to establish the following factorization statement. 

\begin{lemma}\label{lem:ubf:boosting}
	Suppose that for an arbitrary $b$-marginally bounded spin system on a graph of maximum degree $\Delta$,  $\left \lceil{\theta n}\right \rceil $-UBF holds with constant $\Cubf$
	and $\theta \le \frac{b^{2(\Delta+1)}}{4e\Delta^2}$. Then,
	\begin{equation}
	\label{eq:kpart}
	\ent(f)\leq K \Cubf\sum_{i=1}^k\mu[\Ent_{V_i}(f)],
	\end{equation}
	where the constant $K$ satisfies $K=O(\Delta^2(\theta\,b^2)^{-1}\log (1/b) )$.
\end{lemma}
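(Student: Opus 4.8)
The plan is to upgrade $\left\lceil \theta n\right\rceil$-UBF into a factorization over the $k$ independent sets $V_1,\dots,V_k$ of the $k$-partition by a "block-breaking" argument: a uniformly random set $S$ of size $\ell = \left\lceil \theta n\right\rceil$ intersects each $V_i$ in a random subset, so one should be able to replace $\mu[\Ent_S f]$ by a combination of the $\mu[\Ent_{V_i} f]$'s after paying controlled constants. First I would start from \eqref{eq:ubf}, $\frac{\ell}{n}\Ent(f) \le \Cubf \cdot \binom{n}{\ell}^{-1}\sum_{|S|=\ell}\mu[\Ent_S f]$, and for each fixed $S$ decompose $S = \bigsqcup_i (S\cap V_i)$. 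The key is a bound of the form $\mu[\Ent_S f] \le c\sum_{i=1}^k \mu[\Ent_{V_i}(f)]$ that holds \emph{on average} over $S$ with the right constant $c$; since each $S\cap V_i$ is an independent set, $\mu_{S\cap V_i}$ is a product measure, and Lemma \ref{lem:shearer} / Lemma \ref{lem:prod:fact} let us compare entropy on $S\cap V_i$ to entropy on all of $V_i$ at the cost of the marginal-boundedness constant $b$.

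More concretely, I would proceed by a telescoping/conditioning argument across the $k$ colour classes. Write $\mu[\Ent_S f]$ using the chain rule (Lemma \ref{lem:telescope}) over the ordered pieces $S\cap V_1, S\cap V_2,\dots,S\cap V_k$:
\begin{align}
\mu[\Ent_S f] = \sum_{i=1}^k \mu\bigl[\Ent_{S\cap V_i}\bigl(\mu_{S\cap V_1\cup\cdots\cup S\cap V_{i-1}}[f]\bigr)\bigr].
\end{align}
Each summand, by the product structure of $\mu_{S\cap V_i}$ and Lemma \ref{lem:prod:fact}\,\eqref{prodest} (after also using monotonicity, Lemma \ref{lem:mono}, to enlarge the inner averaging set), is at most $\mu[\Ent_{V_i}(g_i)]$ for an appropriate $g_i$; the loss incurred when passing from the inner average over $S\cap(V_1\cup\cdots\cup V_{i-1})$ to the full average, or from $S\cap V_i$ to $V_i$, is where the factor $b^{2(\Delta+1)}$ and the logarithmic factor $\log(1/b)$ enter — this is standard "local" entropy comparison controlled by the ratio of conditional marginals, which is $\ge b$ everywhere. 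Averaging over the random choice of $S$: since $\mathbb{P}[x\in S] = \ell/n = \theta$ roughly, the expected "mass" of each $V_i$ inside $S$ is $\theta|V_i|$, and a second-moment / concentration estimate on $|S\cap V_i|$ (using $\ell = \left\lceil\theta n\right\rceil$ and $\theta \le b^{2(\Delta+1)}/(4e\Delta^2)$, which keeps $S$ small enough that the relevant conditional densities stay bounded below) shows $\binom{n}{\ell}^{-1}\sum_{|S|=\ell}\mu[\Ent_S f] \le \frac{1}{K'}\sum_i \mu[\Ent_{V_i}(f)]$ for $K' = \Omega\bigl(\theta b^2 (\Delta^2\log(1/b))^{-1}\bigr)$. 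Combining with \eqref{eq:ubf} and dividing by $\ell/n = \theta$ gives \eqref{eq:kpart} with $K = O(\Delta^2(\theta b^2)^{-1}\log(1/b))$.

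The main obstacle, I expect, is controlling the passage from $\Ent_{S\cap V_i}$ to $\Ent_{V_i}$ and, more delicately, from the conditional measure $\mu_{S\cap V_1\cup\cdots\cup S\cap V_{i-1}}[f]$ appearing in the chain rule to the unconditioned $f$: naively each conditioning step could blow up the constant multiplicatively across all $k\le\Delta+1$ classes, producing something like $b^{-\Theta(\Delta)}$ rather than the stated polynomial-in-$\Delta$, $b^{-O(\Delta)}$-free bound $K = O(\Delta^2(\theta b^2)^{-1}\log(1/b))$. The trick to avoid the blowup should be to use the variational characterization \eqref{varprin} of entropy together with the fact that replacing a product-measure conditional average by another changes $f$ pointwise only by a bounded multiplicative factor (bounded in terms of $b$ and the number of vertices involved, which is $\le\ell$), so that a \emph{single} uniform application of the comparison — rather than one per colour class — suffices; keeping $\theta$ below the stated threshold $b^{2(\Delta+1)}/(4e\Delta^2)$ is exactly what makes this single comparison legal. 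Pinning down this uniform comparison with the right dependence on $\theta$, $b$, and $\Delta$ is the crux of the argument; the rest is bookkeeping with Lemmas \ref{lem:telescope}–\ref{lem:prod:fact}.
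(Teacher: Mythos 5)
Your proposal follows the paper's outer skeleton (start from $\ell$-UBF, pass to the colour classes via Lemmas \ref{lem:telescope}--\ref{lem:prod:fact} and monotonicity), but it has two genuine gaps. First, the chain-rule decomposition you write over the colour-class intersections, $\mu[\Ent_S f]=\sum_{i=1}^k\mu\bigl[\Ent_{S\cap V_i}\bigl(\mu_{S\cap V_1\cup\cdots\cup S\cap V_{i-1}}[f]\bigr)\bigr]$, is not valid as stated: Lemma \ref{lem:telescope} telescopes over the nested sets $\cup_{j\le i}(S\cap V_j)$, and reducing the $i$-th term to an entropy over $S\cap V_i$ alone requires the product structure $\mu_{\cup_{j\le i}(S\cap V_j)}=\mu_{S\cap V_i}\otimes\mu_{\cup_{j<i}(S\cap V_j)}$ (this is exactly what Eq.~\eqref{prod} needs), which fails because $S$ typically contains edges joining different colour classes. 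This is precisely why the paper instead decomposes $S$ into its connected components $S_1,S_2,\dots$ (across which the conditional measure genuinely factorizes) and only afterwards, inside each component $S_i$, compares $\Ent_{S_i}$ with $\sum_j\mu_{S_i}[\Ent_{V_j\cap S_i}(\cdot)]$ at the price of a component-dependent constant $\Gamma(S_i)$.

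Second, and more seriously, your mechanism for controlling the losses cannot produce the stated constant. The local comparison constant is exponential in the component size, $\Gamma(S_i)\le\zeta\,|S_i|^3\,b^{-2|S_i|}$ with $\zeta=O(b^{-2}\log(1/b))$ (Lemma 4.2 of \cite{CLV20}); correspondingly, your proposed ``single uniform comparison,'' in which replacing one conditional average by another changes $f$ pointwise by a factor bounded in terms of $b$ and the number of vertices involved ($\le\ell=\lceil\theta n\rceil$), would cost $b^{-\Theta(\theta n)}$ --- exponential in $n$, not a constant, and no choice of $\theta$ fixes that. The missing idea is a percolation-type estimate: since $S$ is a uniformly random set of density roughly $\theta$, its connected components (in the square graph $G_2$, after discarding $V_j\setminus\{v\}$) have exponentially decaying size, $\bbP(|T_2(v)|=m)\le\tfrac{\ell}{n}(2e\Delta^2\theta)^{m-1}$, and the hypothesis $\theta\le b^{2(\Delta+1)}/(4e\Delta^2)$ is exactly what makes $2e\Delta^2\theta\,b^{-2(\Delta+1)}\le 1/2$, so the decaying component-size distribution beats the exponentially growing $\Gamma$ and yields $\bbE[\Gamma(S(v))\mid V_j\cap S=W]=O(\Delta^2\zeta)$ uniformly in $W$ and $v$. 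Your second-moment control of $|S\cap V_i|$ addresses the wrong quantity: what matters is the component structure of $S$, not the size of its intersection with each class. Without the component decomposition and this expected-$\Gamma$ estimate, the argument does not go through.
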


We refer to inequality~\eqref{eq:kpart} as a $k$-partite factorization of entropy with constant $K \Cubf$. 
Once we have Lemma \ref{lem:ubf:boosting}, Theorem \ref{thm:UBF-GBF} is implied by the following lemma.

\begin{lemma}\label{lem:kpart:gen}
	Suppose that for an arbitrary spin system on a graph of maximum degree $\Delta$, $k$-partite factorization of entropy holds with constant $C$.  Then, GBF holds with constant $Ck$.
\end{lemma}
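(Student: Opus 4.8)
The plan is to show that $k$-partite factorization with constant $C$ implies general block factorization (GBF) with constant $Ck$, i.e.\ that for every probability distribution $\alpha$ over subsets of $V$ and every $f\geq 0$,
\[
\delta(\alpha)\,\Ent(f)\leq Ck\sum_{B\subset V}\alpha_B\,\mu[\Ent_B(f)].
\]
The starting point is the hypothesis $\Ent(f)\leq C\sum_{i=1}^k\mu[\Ent_{V_i}(f)]$, where $V_1,\dots,V_k$ is a fixed $k$-partition of $G$ into independent sets. So it suffices to bound each term $\mu[\Ent_{V_i}(f)]$ by $\tfrac1k\cdot\tfrac{1}{\delta(\alpha)}\sum_B\alpha_B\,\mu[\Ent_B(f)]$, or more directly to show $\delta(\alpha)\sum_i\mu[\Ent_{V_i}(f)]\leq k\sum_B\alpha_B\,\mu[\Ent_B(f)]$.

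The key mechanism is that each $V_i$ is an independent set, so for any pinning on $V\setminus V_i$ the measure $\mu_{V_i}$ is a product measure $\bigotimes_{x\in V_i}\mu_x$; hence Lemma \ref{lem:shearer} (the weighted Shearer inequality) applies to $\mu_{V_i}$. First I would, for a fixed $i$ and a fixed block distribution $\alpha$, consider the induced distribution on subsets of $V_i$ obtained by restricting blocks: assign to $A\subset V_i$ a weight proportional to $\sum_{B:\,B\cap V_i=A}\alpha_B$ (or, more conveniently, just use the family $\{B\cap V_i:B\subset V\}$ with weights $\alpha_B$, since $\mu_{B\cap V_i}$ dominates $\mu_B$ in the entropy sense). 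The coverage parameter of this induced distribution on $V_i$ is $\min_{x\in V_i}\sum_{B:B\ni x}\alpha_B\geq\delta(\alpha)$. Applying Lemma \ref{lem:shearer} inside $V_i$ gives
\[
\delta(\alpha)\,\ent_{V_i}(f)\leq\sum_{B\subset V}\alpha_B\,\mu_{V_i}[\ent_{B\cap V_i}(f)].
\]
Then I would take $\mu$-expectation over the pinning on $V\setminus V_i$ and use the monotonicity Lemma \ref{lem:mono} — since $B\cap V_i\subset B$ — to get $\mu[\mu_{V_i}[\ent_{B\cap V_i}(f)]]=\mu[\ent_{B\cap V_i}(f)]\leq\mu[\ent_B(f)]$. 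This yields $\delta(\alpha)\,\mu[\ent_{V_i}(f)]\leq\sum_B\alpha_B\,\mu[\ent_B(f)]$ for each $i$. Summing over $i=1,\dots,k$ and combining with the $k$-partite factorization hypothesis gives $\delta(\alpha)\,\Ent(f)\leq C\sum_i\sum_B\alpha_B\,\mu[\ent_B(f)] = Ck\sum_B\alpha_B\,\mu[\ent_B(f)]$, which is exactly GBF with constant $Ck$.

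The one subtlety — and the place to be careful rather than a deep obstacle — is the passage from a block distribution $\alpha$ on subsets of $V$ to a legitimate distribution on subsets of $V_i$ to which Lemma \ref{lem:shearer} literally applies: one must check that the restricted/pushed-forward weights still form a probability distribution (or normalize appropriately, noting that $\sum_B\alpha_B=1$ and empty intersections contribute harmlessly) and that the coverage bound $\geq\delta(\alpha)$ survives the restriction. Another minor point is handling the conditioning correctly, i.e.\ that $\ent_{V_i}(f)$ evaluated at a pinning $\tau$ on $V\setminus V_i$ is the entropy with respect to the product measure $\mu_{V_i}^\tau$, so Lemma \ref{lem:shearer} can be invoked pinning-by-pinning before averaging; Lemma \ref{lem:mono} then supplies the needed $\mu[\ent_{B\cap V_i}(f)]\leq\mu[\ent_B(f)]$. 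I do not expect any genuinely hard step here; the proof is a clean combination of Shearer's inequality on each independent color class with the entropy monotonicity lemma.
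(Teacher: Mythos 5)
Your proposal is correct and follows essentially the same route as the paper: apply the weighted Shearer inequality (Lemma \ref{lem:shearer}) within each independent class $V_j$ to the pushed-forward block distribution (the paper's $\hat\alpha_U=\omega^{-1}\sum_B\alpha_B\IND(V_j\cap B=U)$, with $\omega\,\delta(\hat\alpha)\geq\delta(\alpha)$ handling exactly the normalization/coverage subtlety you flag), then use the monotonicity $\mu[\Ent_{V_j\cap B}(f)]\leq\mu[\Ent_B(f)]$, sum over $j$, and invoke the $k$-partite factorization hypothesis to lose only a factor $k$.
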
	


We provide next the proofs of Lemmas~\ref{lem:kpart:gen} and \ref{lem:ubf:boosting}. 

\begin{proof}[Proof of Lemma~\ref{lem:kpart:gen}]
	Let $\alpha=(\alpha_B)_{B \subset V}$ be a probability distribution over the subsets of $V$.
	Observe that for all $j = 1,...,k$ and all $\tau \in \Omega(V \setminus V_j)$,
	$\mu_{V_j}^\tau$ is a product measure on $\Omega_{V_j}^\tau$.
	Therefore, we can apply Lemma \ref{lem:shearer} with $\Lambda=V_j$
	and $\hat{\alpha}=(\hat{\alpha}_U)_{U \subset V_j}$, where $\hat{\alpha}_U=\omega^{-1} \sum_{B \subset V} \alpha_B \IND(V_j \cap B=U)$
	and $\omega = \sum_{B \subset V} \alpha_B \IND(V_j \cap B \neq \emptyset)$
	. We get
	\begin{align}\label{shear1}
	\d(\hat \alpha)\,\ent_{V_j}^\tau (f) \le &\sum_{U\subset V_j}\hat{\a_U} \,\mu_{V_j}^\tau[\ent_{U} (f)] = \omega^{-1}\sum_{B\subset V}\a_B \,\mu_{V_j}^\tau[\ent_{V_j\cap B} (f)].
	\end{align}
	Observe that
	$$
	\omega \d(\hat \alpha) = \min_{x\in {V_j}}\sum\nolimits_{U \subset V_j: U\ni x}{\hat \a}_U = \min_{x\in {V_j}}\sum\nolimits_{B \subset V: B\ni x}\a_B \ge \d(\alpha) ,
	$$
	and from~\eqref{eq:prelim:b1} we have $\mu[\ent_{V_j \cap B}(f)]\leq\mu[\ent_B(f)]$. Hence, taking expectation in~\eqref{shear1} with respect to $\mu$ we obtain
	\begin{align*}
	\d(\alpha)\,\mu[\ent_{V_j} (f)] \le &\sum_{B\subset V}\a_B \,\mu[\ent_{B} (f)].
	\end{align*}
	Summing over $j$ we have, for all $f: \Omega \to \R_{+},$
	\begin{align*}
	\d(\alpha)\, \sum_{j=1}^k \mu[\ent_{V_j} (f)] \le &\sum_{j=1}^k \sum_{B\subset V}\a_B \,\mu[\ent_{B} (f)],
	\end{align*}
	and since by assumption $k$-partite factorization of entropy holds with constant $C$, we have
	\begin{align*}
	\d(\alpha)\ent(f) & \leq C \sum_{j=1}^k\sum_{B\subset V}\a_B \,\mu[\ent_{B} (f)] \leq C \, k \sum_{B\subset V}\a_B \,\mu[\ent_{B} (f)].
	\end{align*}
	Hence, GBF holds with constant $C k$.
\end{proof}

The main idea behind the proof of Lemma \ref{lem:ubf:boosting} can be roughly explained as follows. The $\ell$-UBF assumption
with $\ell\sim\theta n$ is the factorization statement \eqref{eq:ubf}. If the set $S$ in \eqref{eq:ubf} were an independent set, then suitable applications of Lemma \ref{lem:telescope} and Lemma \ref{lem:prod:fact} would yield the desired conclusion. Moreover, the same conclusion would continue to hold if $S$ were made of bounded connected components. The delicate part of the argument consists in exploiting the fact that if $\theta$ is sufficiently small then one can effectively reduce the problem to case of bounded connected components.     

\begin{proof}[Proof of Lemma \ref{lem:ubf:boosting}]
	Since $\left \lceil{\theta n}\right \rceil$-UBF holds by assumption, setting $C = \Cubf$ one has
	\begin{equation}
	\label{eq:unif_fact}
	\ent(f)\leq \frac C\theta\,\bbE\left[\mu\left[\ent_S (f)\right]\right],
	\end{equation}
	where $S$ is a random set with uniform distribution over all subsets of $V$
	of cardinality $\left \lceil{\theta n}\right \rceil$, and $\bbE$ denotes the corresponding expectation.
	
	Let $S_1,S_2,\dots$ denote the connected components of $S$ in $G$ (taken in some arbitrary order) and for $i>1$ let $S_{<i}=\cup_{j=1}^{i-1}S_j$. Then $\mu_{S_{<i+1}}$ has the product structure $\mu_{S_{<i+1}}=\otimes_{j=1}^i\mu_{S_{j}}$.
	By Lemmas~\ref{lem:telescope} and \ref{lem:prod:fact}, one has the decomposition
	\begin{equation}
	\label{eq:kfact1}
	\mu\left[\ent_S (f)\right] = \sum_{i\ge 1} \mu\left[\ent_{S_{<i+1}} (\mu_{S_{<i}}(f))\right] = \sum_{i\ge 1} \mu[\ent_{S_i} (\mu_{S_{<i}}(f))],
	\end{equation}
	where we have used Eq.\  \eqref{prod} with $A=S_i$ and $B=S_{<i}$.
	For $\tau \in \Omega(V \setminus S_i)$,
	let $\Gamma(S_i,\tau)$ be the optimal constant so that
	$$
	\ent_{S_i}^\tau (\mu_{S_{<i}}(f)) \le \G(S_i,\tau)\,\sum_{j=1}^k  \mu_{S_i}^\tau\left[\ent_{V_j\cap S_i} (\mu_{S_{<i}}(f))\right].
	$$
	Let $\Gamma(S_i) = \max_{\tau \in
		\Omega(V \setminus S_i)} \Gamma(S_i,\tau)$. Then,
	$$
	\mu\left[\ent_S(f)\right] \leq \sum_{i \ge 1} \G(S_i)\, \sum_{j=1}^k \mu\left[\ent_{V_j\cap S_i} (\mu_{S_{<i}}(f))\right].
	$$
	We observe next that for all $j=1,...,k$ one has
	\begin{equation}
	\label{eq:ent:int}
	\mu\left[\ent_{V_j\cap S_i} (\mu_{S_{<i}}(f))\right]\leq \mu\left[\ent_{V_j\cap S_i} (\mu_{V_j\cap S_{<i}}(f))\right].
	\end{equation}
	To see this, we apply Lemma \ref{lem:prod:fact} with $A=V_j\cap S_i$, $B=S_{<i}$ and $U=V_j\cap S_{<i}$.
Since $\mu_{S_{<i+1}}=\otimes_{j=1}^{i}\mu_{S_j}$ the assumptions for that lemma are satisfied and we obtain \eqref{eq:ent:int} from Eq.\ \eqref{prodest}.

	Summarizing, we have obtained
	\begin{equation}
	\label{eq:unif_fact2}
	\ent(f)\leq \frac C\theta \,\sum_{j=1}^k\bbE \left[\sum_{i \ge 1} \G(S_i)\, \mu\left[\ent_{V_j\cap S_i} (\mu_{V_j\cap S_{<i}}(f))\right]
	\right].
	\end{equation}
	We show next that for 
	all $j=1,...,k$
	\begin{equation}
	\label{eq:unif_fact3}
	\bbE\left[\sum_{i \ge 1} \G(S_i)\,\mu\left[\ent_{V_j\cap S_i} (\mu_{V_j\cap S_{<i}}(f))\right]
	\right]\leq C' \mu\left[\ent_{V_j} (f)\right],
	\end{equation}
	with $C'=O\left(\frac{ \log (1/b)}{b^2}\Delta^2\right)$. 
	Combined with \eqref{eq:unif_fact2}, this concludes the proof of the lemma.
	
	Let us fix $j$
	and let $v_1, v_2, \dots$ denote an ordering of the sites in $V_j\cap S$ such that
	$v_1, . . . , v_{|V_j\cap S_1|}$ is an ordering of $V_j\cap S_1$,
	$v_{|V_j\cap S_1|+1}, . . . , v_{|V_j\cap S_1|+|V_j\cap S_2|}$ is an ordering of $V_j\cap S_2$ and so on.
	Since, for all $i\geq 1$, $\mu_{V_j\cap S_i}$ is a product measure, Lemmas~\ref{lem:telescope} and \ref{lem:prod:fact} (as in \eqref{eq:kfact1})
	imply
	$$
	\mu\left[\ent_{V_j\cap S_i} (\mu_{V_j\cap S_{<i}}(f))\right]
	= \sum_{h=|V_j \cap S_1|+\dots+|V_j \cap S_{i-1}|+1}^{ |V_j \cap S_1|+\dots+|V_j \cap S_{i}|}\mu\left[\ent_{v_{h}} (\rho_{v_{h}}(f))\right],
	$$
	where $\rho_{v_{h}}$ is the conditional distribution obtained from $\mu$ by freezing the spins at all the sites outside $V_j$, together with all the sites $v_{h},v_{h+1},\dots,v_{|V_j \cap S|}$.
	
	Using this decomposition and rearranging one finds
	\begin{align}
	\label{eq:unif_fact4}
	\bbE\left[ \sum_{i \ge 1}\G(S_i)\,\mu\left[\ent_{V_j\cap S_i} (\mu_{V_j\cap S_{<i}}(f))\right]
	\right] & = \bbE\left[\sum_{i \ge 1 } \G(S_i) \sum_{h=|V_j \cap S_1|+\dots+|V_j \cap S_{i-1}|+1}^{ |V_j \cap S_1|+\dots+|V_j \cap S_{i}|} \mu\left[\ent_{v_{h}} (\rho_{v_{h}}(f))\right]\right]
	\nonumber\\ & = \bbE\left[\sum_{h}\mu\left[\ent_{v_{h}} (\rho_{v_{h}}(f))\right]\G(S(v_{h}))\right],
	\end{align}
	where $S(v_{h})$ denotes the (unique) connected component of $S$ containing $v_{h}$.
	Notice that for each realization of $S$, $\mu_{V_j\cap S}$ is a product measure and so one has from Lemmas~\ref{lem:telescope} and \ref{lem:prod:fact} that
	$$
	\sum_{h}\mu\left[\ent_{v_{h}} (\rho_{v_{h}}(f))\right] = \mu\left[\ent_{V_j\cap S}(f))\right]\leq \mu\left[\ent_{V_j}(f)\right];
	$$
	the inequality follows from~\eqref{eq:prelim:b1}.
	
	Observe that each term $\mu[\ent_{v_{h}} (\rho_{v_{h}}(f))]$, as well as the sequence $\{v_h\}$, depends on the realization $S$ only through $V_j\cap S$. Therefore,
	$$
	\bbE\left[\sum_{h}\mu\left[\ent_{v_{h}} (\rho_{v_{h}}(f))\right]\G(S(v_{h}))\right]=
	\bbE\left[\sum_{h}\mu\left[\ent_{v_{h}} (\rho_{v_{h}}(f))\right]\bbE\left[\G(S(v_{h}))\tc V_j\cap S\right]\right],
	$$
	where $\bbE\left[\G(S(v_h))\tc V_j\cap S\right]$ is the conditional expectation of $\G(S(v_{h}))$ given the realization $V_j\cap S$.
	Therefore, \eqref{eq:unif_fact3} follows if we prove that
	
	\begin{equation}
	\label{eq:unif_fact5}
	\max_{W\subset V_j}\max_{v\in W} \,\bbE\left[\G(S(v))\tc V_j\cap S=W\right] \le C'.
	\end{equation}
	Now, for a $b$ marginally bounded spin system,
	it follows from Lemma 4.2 in~\cite{CLV20}
	and~\eqref{eq:prelim:b1}
	that
	$$
	\G(S(v))\leq  \zeta |S(v)|^3 z^{|S(v)|},
	$$
	where $\zeta = \zeta(b) = \frac{3 \log (1/b)}{2b^2}$ and $z = 1/b^2$.
	Thus,
	\begin{equation}
	\label{eq:unif_fact6}
	\max_{W\subset V_j}\max_{v\in W} \,\bbE\left[\G(S(v))\tc V_j\cap S=W\right] \le \zeta \cdot \max_{W\subset V_j}\max_{v\in W} \,\bbE\left[|S(v)|^3 z^{|S(v)|} \,\mid\, V_j\cap S=W\right].
	\end{equation}
	
	To bound the expectation on the right-hand-side of~\eqref{eq:unif_fact6}, we consider
	the graph $G_2$ with vertex set $V$ and edge set $E\cup E_2$, where $E$ is the edge set of $G$ and $E_2$ is the set of all pairs of vertices with a common neighbor in $G$. Note that $G_2$ has maximum degree $\Delta^2$. Let ${\mathcal{A}_v(a)}$ be the collection of subsets of  vertices $U \subset V$ such that $|U|\geq a$, $v \in U$ and the induced subgraph $G_2[U]$ of $U$ in $G_2$ is connected.
	
	Now, let us fix the set $W=V_j \cap S$ and the vertex $v \in W$ and let $S_2 :=( S(v)\cap V_{\neq j} )\subset S$, where $ V_{\neq j}:=\bigcup_{i:i \neq j} V_i$. We claim that when the event $\{|S(v)|=a\}$ occurs for some $a\in \bbN$, then $S_2 \in \mathcal{A}_v(\frac{a}{\D+1})$.
	Indeed,
	$G_2[S_2]$ is connected, since $S(v)$ is connected in $G$ and
	removing the vertices in $V_j \setminus \{v\}$ from $S(v)$ will not disconnect $S_2$ in $G_2$. Moreover, $\Delta|S(v)\cap V_{\neq j} | \ge  |S(v)\cap V_j|$, and so
	$$
	a = |S(v)\cap V_j| + |S(v)\cap V_{\neq j} | \le (\Delta+1)|S(v)\cap V_{\neq j}|,
	$$
	which implies that $|S_2|=|S(v)\cap V_{\neq j}|\geq a/(\Delta+1)$.
	Given $S$, let $T_2(v)$ denote the connected component of $S$  in $G_2$ containing $v$, and note that $S_2 \subset T_2(v)$.
	Then, for any $W \subset V_j$, $v \in W$ and integer $a \ge 1$ we get
	\begin{align}
	\bbP\left( |S(v)|= a \tc V_j\cap S=W\right) 
	&\le \bbP\left(\exists \, S_2 \in {\mathcal{A}_v\left(\frac{a}{\Delta+1}\right)} ;S_2 \subset S \right) \notag\\
	&\le \bbP\left(|T_2(v)| \ge  \frac{a}{\Delta+1} \right).
	\end{align}
	Next we use Lemma 4.3 from \cite{CLV20}, which implies that for any integer $m\geq 1$,
%
\begin{align}
	\bbP\left(|T_2(v)|=m\right)\leq  \frac{\ell}n(2e \Delta^2 \theta)^{m-1}.
		\end{align}
		Indeed, the only difference with respect to Lemma 4.3 from \cite{CLV20} is that we have maximum degree $\Delta^2$ here instead of $\Delta$.
		In particular, if $2e \Delta^2 \theta\leq 1/2$,  using $\frac{\ell}n\leq 2\theta$,
		\begin{align}
	\bbP\left(|T_2(v)|\geq \frac{a}{\Delta+1}\right)\leq  4\theta(2e \Delta^2 \theta)^{\lfloor\frac{a}{\Delta+1}\rfloor-1}\leq\Delta^{-2}(2e \Delta^2 \theta)^{\lfloor\frac{a}{\Delta+1}\rfloor}.
		\end{align}
It follows that 
	\begin{align}
	\bbE\left[|S(v)|^3 z^{|S(v)|}\tc V_j\cap S=W\right] &= \sum_{a \geq 1} a^3 z^a \cdot \bbP\left(|S(v)|=a \tc V_j\cap S=W\right) \\
	&\le \Delta^{-2} \sum_{a \geq 1} a^3  (2e\Delta^2\theta z^{\Delta+1})^{\lfloor\frac{a}{\Delta+1}\rfloor} \leq C_1\Delta^2,
	\end{align}
for some absolute constant $C_1$ provided that $2e\Delta^2\theta z^{\Delta+1}\leq 1/2$.
	This implies that
	$$
	\max_{W\subset V_j}\max_{v\in W} \,\bbE\left[|S(v)|^3z^{|S(v)|} \,\tc\, V_j\cap S=W\right] \le C_1\Delta^2.
	$$
	Hence,~\eqref{eq:unif_fact5} and~\eqref{eq:unif_fact3} hold with $C' = C_1 \zeta \Delta^2$,
	and so $k$-partite factorization holds with constant $\Cubf C_1 \zeta \Delta^2/\theta$.
\end{proof}

\section{Spectral independence for contractive distributions}
\label{sec:contraction-SI}
In this section we establish our main results that a contractive distribution is spectrally independent.
These results in particular connect classic probabilistic approach for establishing fast mixing of Markov chains such as coupling with recent developments utilizing spectral independence.
We first consider a special case of Theorem~\ref{thm:glauber} concerned with Glauber dynamics and Hamming metric in Section~\ref{subsec:glauber-hamming}; this will serve as a concrete example to illustrate our approach for establishing spectral independence.
In Section~\ref{subsec:glauber-wHamming}, we consider arbitrary metric and prove Theorem~\ref{thm:glauber}.
Finally, we consider general Markov chains and metrics in Section~\ref{subsec:general-general} and prove Theorem \ref{thm:general}.



\subsection{Warm-up: contraction for Glauber dynamics and Hamming metric}
\label{subsec:glauber-hamming}
In this section, we prove a simpler version of Theorem~\ref{thm:glauber}, which already gives the main idea of our proof approach for establishing spectral independence.
We show that, if the distribution $\mu$ is contractive w.r.t.\ the Glauber dynamics and the Hamming metric, then it is spectrally independent.

\begin{theorem}\label{thm:glauber-hamming}
If $\mu$ is $\ctrct$-contractive w.r.t.\ the Glauber dynamics and the Hamming metric for some $\ctrct\in(0,1)$,
then $\mu$ is spectrally independent with constant $\eta =\frac{2}{(1-\ctrct)n}$.
In particular, if $\ctrct\le 1-\eps / n$, then $\eta \leq 2/\eps$.
\end{theorem}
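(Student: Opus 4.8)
The plan is to bound $\lambda_1(J^\tau)$ for an arbitrary pinning $\tau$ by relating the ALO influence matrix to the one-step behavior of the contractive coupling. Since a pinning only restricts to a smaller spin system (still $\ctrct$-contractive for its Glauber dynamics on the reduced vertex set), it suffices to treat the case of the empty pinning and then apply the same bound to $\mu^\tau$; so I will work with $J = J^\emptyset$ and the vertex set $V$ with $|V|=n$. The key observation is that the quantity $\mu(\sigma_y = \spa' \mid \sigma_x = \spa) - \mu(\sigma_y = \spa')$ measures how a ``disagreement at $x$'' propagates to $y$, which is exactly what a contractive coupling controls. Concretely, I would introduce, for each vertex $x$ and each pair of spins $\spa,\spb \in \Omega_x$, two configurations distributed as $\mu(\cdot \mid \sigma_x = \spa)$ and $\mu(\cdot \mid \sigma_x = \spb)$, couple them optimally, run one step of the Glauber dynamics under the contractive coupling, and iterate. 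Because the Glauber dynamics has the pinned measures as stationary distributions and the contraction holds for \emph{all} pinnings (here we only need it for the original measure, but the resetting-at-$x$ argument uses it in a mild way), the expected Hamming distance after many steps decays geometrically and in the limit the two chains are distributed according to the two conditional measures, so the ``influence'' entries are dominated by a geometric sum of one-step disagreement probabilities.

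The main steps, in order, would be: (1) Fix $x$ and a ``worst'' eigenvector/test direction; reduce bounding $\lambda_1(J)$ to bounding, for each fixed $x$, a suitable aggregate of the influences $\sum_{y,\spa'} |J(x,\spa;y,\spa')|$ weighted appropriately — more precisely, use the fact that $J$ is self-adjoint (Eq.~\eqref{self-adjoint}) together with a Gershgorin-type / $\ell_1\to\ell_1$ bound to reduce to controlling $\sum_{y \neq x} \max_{\spa'} |J(x,\spa;y,\spa')|$ or a probabilistic analogue. (2) Represent $J(x,\spa;y,\spa')$ via a coupling: let $(X,Y)$ be the optimal coupling of $\mu(\cdot\mid\sigma_x=\spa)$ and $\mu(\cdot\mid\sigma_x=\spa')$ obtained as the $t\to\infty$ limit of the contractive coupling started from any pair of configurations agreeing off $x$; then $\Prob(X_y \neq Y_y)$ dominates $\sum_{\spa'}|J(x,\spa;y,\spa')|$ up to a factor of $2$. (3) Bound $\sum_{y} \Prob(X_y \neq Y_y) = \E[\hamming{X}{Y}]$: starting from a pair at Hamming distance $1$ (disagreeing only at $x$), each Glauber step contracts the expected distance by $\ctrct$, but the pair may drift from the ``$\sigma_x=\spa$ vs $\sigma_x=\spa'$'' constraint; the standard fix is to use the chain that resamples everything \emph{except} $x$, for which the two conditional measures are stationary, and sum the geometric series $\sum_{t\ge 0}\ctrct^t = \frac{1}{1-\ctrct}$, with an extra factor $1/n$ because only the step that selects $x$... — actually the cleaner route is: one step of Glauber from a distance-$1$ pair has expected distance $\le \ctrct$, and the total expected ``disagreement mass ever created at other vertices'' telescopes to $\le \frac{1}{n(1-\ctrct)}$ after accounting that with probability $1-1/n$ the step does not touch $x$. (4) Combine: $\lambda_1(J) \le 2 \cdot \frac{1}{n(1-\ctrct)} \cdot (\text{const})$, and tracking constants gives exactly $\eta = \frac{2}{(1-\ctrct)n}$; specialize $\ctrct \le 1 - \eps/n$ to get $\eta \le 2/\eps$.

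The hard part will be step (3): making rigorous the passage from ``one-step contraction in Hamming distance'' to ``total influence is a geometric sum,'' while correctly handling the constraint that the two conditioned chains must keep $\sigma_x$ fixed at $\spa$ and $\spa'$ respectively. The natural device is to run the Glauber dynamics \emph{restricted to $V\setminus\{x\}$} (equivalently, never select $x$), which is $\ctrct'$-contractive for a related contraction factor, has the two conditional measures $\mu(\cdot\mid\sigma_x=\spa)$ and $\mu(\cdot\mid\sigma_x=\spb)$ as its stationary distributions, and for which the disagreement percolation / summation argument is clean; one then has to check that the contraction constant of this restricted chain is compatible with the stated $\eta$ (this is where the factor $1/n$ versus $1/(n-1)$ and the precise form $\frac{2}{(1-\ctrct)n}$ must be reconciled). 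A secondary technical point is justifying the $t\to\infty$ limit of the coupling and that the limiting coupling indeed has the two conditional measures as marginals, which follows from ergodicity of the restricted Glauber dynamics (guaranteed by total-connectedness) together with the contraction estimate.
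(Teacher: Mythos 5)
Your steps (1), (2) and (4) match the paper's skeleton: bound $\lambda_1(J)$ by row sums $S(\vx,\spa)=\sum_{(\vy,\spb)}|J(\vx,\spa;\vy,\spb)|$, recognize $S(\vx,\spa)$ as the difference of expectations of a $2$-Lipschitz signed-indicator function under the conditional and unconditional measures, and hence as at most $2\,W_{1,d_{\mathrm{H}}}$ between those measures. (Minor imprecision: $J$ compares $\mu(\cdot\mid\sigma_\vx=\spa)$ with $\mu$ itself, not with $\mu(\cdot\mid\sigma_\vx=\spa')$; your version can be repaired by averaging over $\spa'$, but it should be stated.) The genuine gap is in step (3), which is the heart of the argument. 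The hypothesis of $\ctrct$-contraction supplies, for each pinning $\tau$, a one-step coupling of \emph{two copies of the same chain} $P^\tau$. It supplies nothing for coupling the chain conditioned on $\sigma_\vx=\spa$ with the chain conditioned on $\sigma_\vx=\spa'$ (or with the unpinned chain), and it is not a per-vertex or same-vertex-selection coupling, so it cannot be ``censored at $\vx$'' to produce such a cross-coupling while retaining the contraction estimate. Your proposed fix --- run the dynamics restricted to $V\setminus\{\vx\}$ and use that it is ``$\ctrct'$-contractive for a related contraction factor'' --- cannot work as stated: a chain that never updates $\vx$ keeps any disagreement at $\vx$ forever, so for a pair differing at $\vx$ its expected Hamming distance stays at least $1$ and no strict contraction is possible; nor is such a restricted, unpinned chain a member of the collection $\{P^\tau\}$ covered by the hypothesis (cf.\ Remark~\ref{rmk:glauber-pinning}, where pinned chains still select all $n$ vertices). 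Consequently the geometric-series bookkeeping in your step (3) has no licensed coupling to run on, and the ``reconciliation of $1/n$ versus $1/(n-1)$'' you flag is not the real obstacle.

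The missing ingredient is the paper's Lemma~\ref{lem:general-stein}: if $\mu$ is $\ctrct$-contractive w.r.t.\ $P$ and $\nu$ is stationary for another chain $Q$, then $|\bbE_\mu f-\bbE_\nu f|\le \frac{L_d(f)}{1-\ctrct}\,\bbE_\nu[W_{1,d}(P(\sigma,\cdot),Q(\sigma,\cdot))]$. This uses the hypothesized coupling only through iterates of the single chain $P$ (via the Poisson-equation function $h=\sum_t P^t\bar f$, whose Lipschitz constant is $\le L_d(f)/(1-\ctrct)$; alternatively one can telescope $W_{1,d}(\mu,\nu)\le W_{1,d}(\mu P,\nu P)+W_{1,d}(\nu P,\nu Q)\le \ctrct\,W_{1,d}(\mu,\nu)+\bbE_\nu[W_{1,d}(P(\sigma,\cdot),Q(\sigma,\cdot))]$), and it compares $P$ with $Q$ only through their one-step kernels \emph{from the same state} $\sigma$. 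That one-step comparison is a coupling you construct yourself (select the same vertex in both chains), giving $W_{1,d_{\mathrm{H}}}(P(\sigma,\cdot),Q(\sigma,\cdot))\le 1/n$, whence $S(\vx,\spa)\le \frac{2}{(1-\ctrct)n}$ and the claimed $\eta$. It also avoids your secondary issue of justifying a $t\to\infty$ limiting coupling, since no ergodicity of $P$ is needed. Without this lemma (or an equivalent device), your outline does not close.
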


\begin{remark}\label{rmk:glauber-pinning}
In this paper, the Glauber dynamics $P^\tau_{\textsc{gl}}$ for the conditional distribution $\mu^\tau$ with a pinning $\tau$ on $U\subset V$ is defined as follows: in each step the chain picks a vertex $\vx \in V$ u.a.r.\ and updates its spin conditioned on all other vertices and $\tau$.
In particular, all pinned vertices in $U$ are allowed to be selected and when this happens the configuration will remain the same (no updates will be made).
This setting can make our theorem statements and proofs easier to understand, and will not harm our results since we only consider these chains for the purpose of analysis rather than actually running them.
Alternatively, we can define the Glauber dynamics $\tilde{P}^\tau_{\textsc{gl}}$ for $\mu^\tau$ in the following way: in each step an \emph{unpinned} vertex $\vx \in V \setminus U$ is selected u.a.r.\ and updated accordingly.
Note that $\tilde{P}^\tau_{\textsc{gl}}$ is faster than $P^\tau_{\textsc{gl}}$ and the contraction rate of $\tilde{P}^\tau_{\textsc{gl}}$ depends on the number of unpinned vertices.
If we assume $\mu^\tau$ is $\ctrct_\ell$-contractive w.r.t.\ $\tilde{P}^\tau_{\textsc{gl}}$ and $d_{\mathrm{H}}$ where $\ell = |V \setminus U|$, then an analog of Theorem~\ref{thm:glauber-hamming} can show that $\mu$ is spectrally independent with
$$ \eta = \max_{\ell = 1,\dots, n} \left\{\frac{2}{(1-\ctrct_\ell) \ell}\right\}. $$
However, in actual applications such as under the Dobrushin uniqueness condition in Section~\ref{subsec:dob-si}, the contraction rate satisfies $\ctrct_\ell \le 1-\eps/ \ell$, so we eventually get $\eta \le 2/\eps$ just as from Theorem~\ref{thm:glauber-hamming}.
\end{remark}

Recall that for any pinning $\tau \in \Pinning$ we let $\mu^\tau$ be the conditional distribution over $\Omega^\tau$ given $\tau$,
and the ALO influence matrix $\ALOi^\tau$ is a square matrix indexed by $\VSpairs^\tau$ and defined as $\ALOi(\vx, \spa; \vx, \spb) = 0$ and
\[
\ALOi^\tau(\vx, \spa; \vy, \spb) = \mu^\tau(\sigma_\vy = \spb \mid \sigma_\vx = \spa) - \mu^\tau(\sigma_\vy = \spb) ~\text{for}~ \vx \neq \vy.
\]
The distribution $\mu$ is said to be $\eta$-spectrally independent if $\lambda_1(\ALOi^\tau) \le \eta$ for all pinning $\tau$.

Our goal is to upper bound the maximum eigenvalue of the ALO influence matrix $\ALOi^\tau$ for a given pinning $\tau$.
In fact, to make notations simpler we will only consider the case where there is no pinning; the proof is identical by replacing $\Omega, \mu, J$ with $\Omega^\tau, \mu^\tau, J^\tau$ when an arbitrary pinning $\tau$ is given.
To upper bound $\lambda_1(\ALOi)$, a standard approach that has been applied in previous works \cite{ALO20,CLV1,CGSV21,FGYZ21,CLV20} is to upper bound the infinity norm of $J$.
More specifically, for each $(\vx,\spa) \in \VSpairs$ we define
\begin{equation}\label{eq:S_def}
S(\vx, \spa) = \sum_{(\vy, \spb) \in \VSpairs} | \ALOi(\vx, \spa; \vy, \spb) |
\end{equation}
to be the sum of absolute influences of a given pair $(\vx, \spa)$.
The quantity $S(\vx, \spa)$ can be thought of as the total influence of $(\vx, \spa)$ on all other vertex-spin pairs.
If one can show $S(\vx, \spa) \le \eta$ for all $(\vx,\spa) \in \VSpairs$, then it immediately follows that
\[
\lambda_1(J) \le \|J\|_\infty = \max_{(\vx, \spa) \in \VSpairs} S(\vx, \spa) \le \eta.
\]
Hence, it suffices to show that $S(\vx, \spa) = O(1)$.

Fix $(\vx,\spa) \in \VSpairs$, and define the distribution $\nu = \mu(\cdot \mid \sigma_\vx = \spa)$; namely, $\nu$ is the conditional distribution of $\mu$ with the pinning $\sigma_\vx = \spa$.
The key observation we make here is that the quantity $S(\vx, \spa)$ can be viewed as the difference of the expectation of some function $f$ under the two measures $\mu$ and $\nu$.
More specifically, we define
\begin{equation}\label{eq:f_def}
f(\si) = \sum_{(\vy, \spb)\in \VSpairs} t(\vx, \spa; \vy, \spb) \, \mathbf{1}_{\{\sigma_\vy = \spb\}},
\end{equation}
where
$$
t(\vx, \spa; \vy, \spb) = \sgn(J(\vx, \spa; \vy, \spb)) =
\begin{cases}+1, & J(\vx, \spa; \vy, \spb) > 0;\\
-1, & J(\vx, \spa; \vy, \spb) < 0;\\
0, & J(\vx, \spa; \vy, \spb) = 0.
\end{cases}
$$
With this definition it follows that
\begin{align*}
S(\vx, \spa) &= \sum_{(\vy, \spb) \in \VSpairs} t(\vx, \spa; \vy, \spb) \ALOi(\vx, \spa; \vy, \spb) \\
&= \sum_{(\vy, \spb) \in \VSpairs} t(\vx, \spa; \vy, \spb) \mu(\sigma_\vy = \spb \mid \sigma_\vx = \spa) - t(\vx, \spa; \vy, \spb) \mu(\sigma_\vy = \spb) \\
&=\bbE_\nu f - \bbE_\mu f.
\end{align*}
Therefore, the absolute sum of influences $S(\vx, \spa)$ describes, in some sense, the ``distance'' of the two distributions $\nu$ and $\mu$ measured by $f$.

To be more precise about our last statement, we review some standard definitions about the Wasserstein distance.
Let $(\Omega, d)$ be a finite metric space.
We say a function $f: \Omega \to \R$ is \emph{$L$-Lipschitz} w.r.t.\ the metric $d$ if for all $\sigma,\tau \in \Omega$ we have
\[
|f(\sigma) - f(\tau)| \le L d(\si,\tau).
\]
For every function $f: \Omega \to \R$, we let $L_d(f)$ be the optimal Lipschitz constant of $f$ w.r.t.\ the metric $d$; i.e., $L_d(f) = \inf \{L\ge0: \text{$f$ is $L$-Lipschitz w.r.t.\ $d$} \} $.
For a pair of distributions $\mu$ and $\nu$ on $\Omega$, the \emph{$1$-Wasserstein distance} w.r.t.\ the metric $d$ between $\mu$ and $\nu$ is defined as
\[
W_{1,d}(\mu, \nu) = \inf \left\{ \bbE_\pi [ d(\si,\tau) ] \mid \pi \in \cC(\mu,\nu) \right\}
\]
where $\cC(\mu,\nu)$ denotes the set of all couplings of $\mu,\nu$ (i.e., $\pi(\cdot,\cdot) \in \cC(\mu,\nu)$ is a joint distribution over $\Omega \times \Omega$ with the marginals on the first and second coordinates being $\mu$ and $\nu$ respectively) and $(\sigma,\tau)$ is distributed as $\pi$;
equivalently, the $1$-Wasserstein distance can be represented as
\begin{equation}
\label{eq:W1-dual}
W_{1,d}(\mu,\nu) = \sup \left\{ \bbE_\mu f - \bbE_\nu f \mid f: \Omega \to \R,\, L_d(f) \le 1 \right\}.
\end{equation}

Observe that, the function $f$ defined by \eqref{eq:f_def} is $2$-Lipschitz w.r.t.\ the Hamming metric $d_{\mathrm{H}}$;
to see this, if $\sigma, \tau \in \Omega$ and $\hamming{\sigma}{\tau} = k$ then by the definition of $f$ we have $|f(\si) - f(\tau)| \le 2k$.
Therefore, we deduce from \eqref{eq:W1-dual} that
\[
S(\vx, \spa) = \bbE_\nu f - \bbE_\mu f \le L_{d_{\mathrm{H}}}(f) \, W_{1,d_{\mathrm{H}}}(\nu, \mu) \le 2 W_{1,d_{\mathrm{H}}}(\nu, \mu).
\]
That means, if one can show $W_{1,d_{\mathrm{H}}}(\nu, \mu) = O(1)$ for $\mu$ and $\nu = \mu(\cdot \mid \sigma_\vx = \spa)$ for any pair $(\vx, \spa)$, then $\lambda_1(J) = O(1)$ and spectral independence would follow.

%
%


The following lemma, which generalizes previous works \cite{BN,RR19}, will be used to bound the Wasserstein distance of two distributions and may be interesting of its own.
Roughly speaking, it claims that if $\mu,\nu$ are the stationary distributions of two Markov chains $P,Q$ (e.g., Glauber dynamics) respectively, and if $\mu$ is contractive w.r.t.\ $P$ and the two chains $P,Q$ are ``close'' to each other in one step, then the Wasserstein distance between $\nu$ and $\mu$ is small.
The special case where $\Omega = \{+,-\}^n$ and $P,Q$ are both the Glauber dynamics appeared in \cite[Theorem 3.1]{BN} and~\cite[Theorem 2.1]{RR19}, but here we do not make any assumption on the state space or the chains, which is crucial to our applications in Section~\ref{subsec:coloring}.

\begin{lemma}\label{lem:general-stein}
Let $(\Omega, d)$ be a finite metric space.
Let $\mu,\nu$ be two distributions over $\Omega$,
and $P,Q$ be two Markov chains on $\Omega$ with stationary distributions $\mu,\nu$ respectively.
If $\mu$ is $\ctrct$-contractive w.r.t.\ the chain $P$ and the metric $d$,
then for every $f: \Omega \to \R$ we have
\[
|\bbE_\mu f - \bbE_\nu f| \le \frac{L_d(f)}{1-\ctrct} \, \bbE_\nu \left[ W_{1,d}(P(\sigma,\cdot), Q(\sigma,\cdot)) \right]
\]
where $P(\sigma,\cdot)$ is the distribution after one step of the chain $P$ when starting from $\sigma$ and similarly for $Q(\sigma,\cdot)$.
As a consequence,
\[
W_{1,d}(\mu, \nu) \le \frac{1}{1-\ctrct} \, \bbE_\nu \left[ W_{1,d}(P(\sigma,\cdot), Q(\sigma,\cdot)) \right].
\]
\end{lemma}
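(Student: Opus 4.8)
The natural approach is a telescoping argument in the spirit of Stein's method / the coupling trick behind the Kantorovich duality for Markov chains. Fix a test function $f$ with $L_d(f) \le 1$ (the general case follows by scaling). The idea is to write the difference $\bbE_\mu f - \bbE_\nu f$ as a telescoping sum over time using the two chains. Since $\nu$ is stationary for $Q$, we have $\bbE_\nu f = \bbE_\nu[Q^t f]$ for every $t$, and since $\mu$ is stationary for $P$ we have $\bbE_\mu f = \bbE_\mu[P^t f]$. Combining these and letting $t\to\infty$ (using ergodicity so that $P^t f \to \bbE_\mu f$ pointwise), one obtains the identity
\[
\bbE_\nu f - \bbE_\mu f = \sum_{t=0}^{\infty} \bbE_\nu\left[ P^t f - P^{t+1} f \circ \text{(via }Q\text{)}\right],
\]
which after rearranging reads
\[
\bbE_\nu f - \bbE_\mu f = \sum_{t=0}^{\infty} \bbE_\nu\left[ (Q - P)(P^t f) \right] = \sum_{t=0}^{\infty} \bbE_\nu\left[ Q(P^t f) - P(P^t f) \right].
\]
Here I am using $\bbE_\nu[Q g] = \bbE_\nu[g]$ for the stationarity of $\nu$, so the sum genuinely telescopes.

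\textbf{Bounding each term.} For a fixed configuration $\sigma$, the quantity $Q(P^t f)(\sigma) - P(P^t f)(\sigma)$ is exactly $\bbE_{Q(\sigma,\cdot)}[P^t f] - \bbE_{P(\sigma,\cdot)}[P^t f]$, which by the dual formulation of the Wasserstein distance \eqref{eq:W1-dual} is at most $L_d(P^t f)\cdot W_{1,d}(P(\sigma,\cdot), Q(\sigma,\cdot))$. The crucial point is now the contraction hypothesis: if $\mu$ is $\ctrct$-contractive w.r.t.\ $P$ and $d$, then applying $P$ to a $1$-Lipschitz function shrinks its Lipschitz constant by a factor $\ctrct$, i.e.\ $L_d(Pg) \le \ctrct\, L_d(g)$. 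This is a standard duality fact: for any coupling witnessing contraction, $|Pg(\sigma) - Pg(\tau)| \le \bbE[|g(X_1) - g(Y_1)|] \le L_d(g)\,\bbE[d(X_1,Y_1)] \le \ctrct\, L_d(g)\, d(\sigma,\tau)$. Iterating gives $L_d(P^t f) \le \ctrct^t L_d(f) \le \ctrct^t$. Plugging this in,
\[
|\bbE_\nu f - \bbE_\mu f| \le \sum_{t=0}^\infty \ctrct^t\, \bbE_\nu\left[ W_{1,d}(P(\sigma,\cdot), Q(\sigma,\cdot)) \right] = \frac{1}{1-\ctrct}\,\bbE_\nu\left[ W_{1,d}(P(\sigma,\cdot), Q(\sigma,\cdot)) \right],
\]
and restoring the general Lipschitz constant gives the first claimed inequality. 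The second inequality ($W_{1,d}(\mu,\nu) \le \cdots$) then follows by taking the supremum over all $1$-Lipschitz $f$ and invoking \eqref{eq:W1-dual} once more.

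\textbf{Expected obstacles.} The genuinely delicate points are (i) justifying the passage to the limit $t\to\infty$ and the convergence of the telescoping series — this needs ergodicity of $P$ (so $P^t f \to \bbE_\mu f$) together with the geometric bound $L_d(P^t f)\le \ctrct^t$, which makes the tail summable, so one should be careful to set up the partial sums $\bbE_\nu f - \bbE_\nu[P^N f]$ and let $N\to\infty$; and (ii) verifying the Lipschitz-contraction inequality $L_d(Pg)\le \ctrct L_d(g)$ cleanly from the definition of $\ctrct$-contraction, since the coupling may a priori depend on the starting pair — but this causes no trouble because we only need the bound pointwise for each pair $(\sigma,\tau)$. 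Everything else is bookkeeping. One minor subtlety: the finiteness of $\Omega$ makes all functions bounded, so there are no integrability concerns, and the interchange of sum and expectation is automatic.
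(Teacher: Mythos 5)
Your proof is correct and is essentially the paper's argument: the paper sums $P^t\bar f$ into the Poisson-equation solution $h$ and bounds $L_d(h)\le L_d(f)/(1-\kappa)$, which is exactly your term-by-term telescoping $\sum_t \bbE_\nu[(Q-P)P^tf]$ with $L_d(P^tf)\le\kappa^t L_d(f)$, so the two differ only in packaging. One small remark: your appeal to ergodicity is unnecessary (and the lemma does not assume it) — contraction alone gives $|P^Nf(\sigma)-\bbE_\mu f|\le\kappa^N L_d(f)\max_{\sigma,\tau}d(\sigma,\tau)\to 0$, which is precisely how the paper removes its initial irreducibility assumption, so your partial-sum setup already covers the lemma as stated.
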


We remark that Lemma~\ref{lem:general-stein} holds in a very general setting, and $(\Omega, d)$ can be any finite metric space.
It shows that if two Markov chains are close to each other, then their stationary distributions must be close to each other, under the assumption that one of the chains is contractive.

\begin{proof}[Proof of Lemma~\ref{lem:general-stein}]
The proof imitates the arguments from \cite{BN,RR19}.
Assume for now that $P$ is irreducible; this is a conceptually easier case and we will consider general $P$ later.
Since $P$ is irreducible, let $h$ be the principal solution to the Poisson equation $(I-P)h=\bar f$ where $\bar{f} = f-\bbE_\mu f$; that is,
\begin{equation}\label{eqn:h}
h = \sum_{t=0}^\infty P^t\bar{f}.
\end{equation}
See Lemma 2.1 in \cite{BN} and the references in that paper for backgrounds on the Poisson equation.
We then have
\[
\bbE_\nu f - \bbE_\mu f = \bbE_\nu \bar{f} = \bbE_\nu [(I-P) h]=\bbE_\nu [(Q-P) h]
\]
where the last equality is due to $\nu = \nu Q$.
For each $\sigma \in \supp(\nu) \subset \Omega$, we deduce from \eqref{eq:W1-dual} that
\[
((Q - P) h) (\sigma) = \bbE_{Q(\sigma,\cdot)} h - \bbE_{P(\sigma,\cdot)} h \le L_d(h) \, W_{1,d}(Q(\sigma,\cdot), P(\sigma,\cdot)).
\]
It remains to bound the Lipschitz constant of $h$.
For $\sigma, \tau \in \Omega$,
\begin{align*}
|h(\sigma) - h(\tau)|
&\le \sum_{t=0}^\infty \left| (P^t\bar{f})(\si) - (P^t\bar{f})(\tau) \right| \\
&= \sum_{t=0}^\infty \left| \bbE_{P^t(\si,\cdot)} \bar{f} - \bbE_{P^t(\tau,\cdot)} \bar{f} \right| \\
&\le L_d(f) \sum_{t=0}^\infty W_{1,d}(P^t(\si,\cdot), P^t(\tau, \cdot))
\end{align*}
where the last inequality again follows from \eqref{eq:W1-dual}.
Since $\mu$ is $\ctrct$-contractive w.r.t.\ $P$ and $d$, for all $\sigma,\tau \in \Omega$ and every integer $t \ge 1$ we have
\[
W_{1,d}(P^t(\sigma,\cdot), P^t(\tau,\cdot)) \le \ctrct^t d(\si,\tau).
\]
We then deduce that
\[
|h(\sigma) - h(\tau)|
\le L_d(f) \sum_{t=0}^\infty \ctrct^t d(\si,\tau) = \frac{L_d(f)}{1-\ctrct} \, d(\si,\tau).
\]
This implies that $L_d(h) \le L_d(f)/(1-\ctrct)$ and the lemma then follows.

Next, we show how to remove the assumption that $P$ is irreducible.
Observe that in the proof above we only need the irreducibility of $P$ to guarantee that the function $h$ given by \eqref{eqn:h} is well-defined; i.e., the series on the right-hand side of \eqref{eqn:h} is convergent. The rest of the proof does not require the irreducibility of $P$.
In fact, one can deduce the convergence of \eqref{eqn:h} solely from the contraction of $P$.
Note that for all $\sigma \in \Omega$,
\[
\left| P^t\bar{f}(\sigma) \right|
= \left| P^t\bar{f}(\sigma) - \bbE_\mu P^t\bar{f} \right|
= \left| P^t\bar{f}(\sigma) - \sum_{\tau \in \Omega} \mu(\tau) P^t\bar{f}(\tau) \right|
\le \sum_{\tau \in \Omega} \mu(\tau) \left| P^t\bar{f}(\sigma) - P^t\bar{f}(\tau) \right|
\]
where the first equality follows from $\bbE_\mu P^t\bar{f} = \bbE_\mu \bar{f} = 0$.
Since $\Omega$ is finite, to show that \eqref{eqn:h} is convergent for all $\sigma \in \Omega$, it suffices to show that for all $\sigma,\tau \in \Omega$ the series $\sum_{t=0}^\infty \left| P^t\bar{f}(\sigma) - P^t\bar{f}(\tau) \right|$ is convergent.
Actually, our proof before has already showed that
\[
\sum_{t=0}^\infty \left| P^t\bar{f}(\sigma) - P^t\bar{f}(\tau) \right| \le \frac{L_d(f)}{1-\ctrct} \, d(\si,\tau) < \infty
\]
using only the contraction of $P$, where we have $L_d(f) < \infty$ and $\sup_{\sigma,\tau \in \Omega} d(\si,\tau) < \infty $ because $\Omega$ is finite.
Therefore, the lemma remains true without the assumption of irreducibility of $P$.
\end{proof}

Given Lemma~\ref{lem:general-stein}, we can now complete the proof of Theorem~\ref{thm:glauber-hamming}.

\begin{proof}[Proof of Theorem~\ref{thm:glauber-hamming}]
For every $(\vx, \spa) \in \VSpairs$, we deduce from Lemma~\ref{lem:general-stein} that
\begin{equation}\label{eq:S_proof}
S(\vx,\spa) = \bbE_\nu f - \bbE_\mu f \le \frac{L_{d_{\mathrm{H}}}(f)}{1-\ctrct} \, \bbE_\nu \left[ W_{1,d_{\mathrm{H}}}(P(\sigma,\cdot), Q(\sigma,\cdot)) \right]
\end{equation}
where $S(\vx,\spa)$ is given by \eqref{eq:S_def}, $f$ is given by \eqref{eq:f_def},
$P$ is the Glauber dynamics for $\mu$,
and $Q$ is the Glauber dynamics for $\nu = \mu^{(\vx,\spa)} = \mu(\cdot \mid \sigma_\vx = \spa)$ (we use $(\vx,\spa)$ to denote the pinning $\sigma_\vx = \spa$).
We claim that for every $\sigma \in \Omega^{(\vx,\spa)}$,
\begin{equation}\label{eq:W_proof}
W_{1,d_{\mathrm{H}}}(P(\si,\cdot), Q(\si,\cdot)) \le \frac{1}{n}.
\end{equation}
To see this, let $\sigma_1$ and $\sigma_2$ be the configurations after one step of $P$ and $Q$ respectively when starting from $\sigma$.
We can couple $\sigma_1$ and $\sigma_2$ by picking the same vertex to update in the Glauber dynamics.
If the picked vertex is not $\vx$, then we can make $\sigma_1 = \sigma_2$; meanwhile, if $\vx$ is picked, which happens with probability $1/n$, then $d_{\mathrm{H}}(\si_1,\si_2) \le 1$ where the discrepancy is caused by the pinning $\sigma_\vx = \spa$.
Therefore, the $1$-Wasserstein distance between $\sigma_1$ and $\sigma_2$ is upper bounded by $1/n$; this justifies our claim.
Combining $L_{d_{\mathrm{H}}}(f) \le 2$ and \eqref{eq:W_proof}, we obtain from \eqref{eq:S_proof} that $S(\vx,\spa) \le \frac{2}{(1-\ctrct)n}$ for each $(\vx,\spa)$; consequently, $\lambda_1(J) \le \frac{2}{(1-\ctrct)n}$.
The same argument holds for $\mu^\tau$ under any pinning $\tau$ as well,
and spectral independence then follows.
\end{proof}

\subsection{Contraction for Glauber dynamics and general metrics}
\label{subsec:glauber-wHamming}
In this section, we generalize the Hamming metric assumption in Theorem~\ref{thm:glauber-hamming} to any weighted Hamming metric or any metric equivalent to Hamming, which establishes Theorem~\ref{thm:glauber}.
We restate it here for convenience.

\glauber*

We prove the two cases of Theorem~\ref{thm:glauber} separately.
We first consider the weighted Hamming metric.
Recall that for a positive weight function $w : V \to \R_+$,
the $w$-weighted Hamming metric $d = d_w$ is given by
\[
d_w(\sigma, \tau) = \sum_{\vx \in V} w(\vx) \mathbf{1}\{\si_\vx \neq \tau_\vx\} ~\text{for}~ \sigma,\tau \in \Omega.
\]
In particular, if $w(\vx) = 1$ for all $\vx$ then $d$ is the usual Hamming metric.

Unfortunately, the proof of Theorem~\ref{thm:glauber-hamming} does not work directly in this scenario.
The reason is that the right-hand side of \eqref{eq:S_proof}, with $d_{\mathrm{H}}$ replaced by $d = d_w$ now, can be as large as $O(w_{\max}/w_{\min})$ (more specifically, $L_d(f) = O(1/w_{\min})$ and $W_{1,d}(P(\sigma,\cdot), Q(\sigma,\cdot)) = O(w_{\max})$), which can be unbounded since we are not making any assumption on $w$.
To deal with this, we need to take the vertex weights into account when defining the function $f$ and, more importantly, when defining the absolute sum of influences $S(\vx,\spa)$.

\begin{proof}[Proof of Theorem~\ref{thm:glauber}(1)]
For ease of notation we may assume that there is no pinning; the proof remains the same with an arbitrary pinning $\tau$.
For fixed $(\vx,\spa) \in \VSpairs$, we define the $w$-weighted sum of absolute influences given by
\begin{equation}\label{eq:Sw_def}
S_w(\vx, \spa) = \sum_{(\vy, \spb) \in \VSpairs} w(\vy) \,| \ALOi(\vx, \spa; \vy, \spb) |.
\end{equation}
Such weighted sums were considered in \cite[Lemma 22]{CLV1} to deduce spectral independence.
We claim that
if $S_w(\vx,\spa) \le \eta\, w(\vx)$ for all $(\vx,\spa)\in\VSpairs$ for some $\eta > 0$, then $\lambda_1(J) \le \eta$.
To see this, let $\tilde{w} \in \R_+^{|\VSpairs|}$ with $\tilde{w}(\vx,\spa) = w(\vx)$ and let $W = \diag(\tilde{w})$;
the assumption of the claim then implies that $\|W^{-1} J W\|_\infty \le \eta$ and thus $\lambda_1(J) = \lambda_1(W^{-1} J W) \le \eta$. 
Therefore, it suffices to upper bound the ratio $S_w(\vx,\spa) / w(\vx)$.

Let $\nu = \mu^{(\vx,\spa)} = \mu(\cdot \mid \sigma_\vx = \spa)$ be the conditional distribution with pinning $\sigma_\vx = \spa$, and define
\begin{equation}\label{eq:fw_def}
f_w(\si) = \sum_{(\vy, \spb)\in \VSpairs} w(\vy) \, t(\vx, \spa; \vy, \spb) \, \mathbf{1}_{\{\sigma_\vy = \spb\}}
\end{equation}
where $t(\vx, \spa; \vy, \spb) = \sgn(J(\vx, \spa; \vy, \spb))$.
Observe that $L_d(f_w) \le 2$ and
\[
S_w(\vx, \spa) = \bbE_\nu f_w - \bbE_\mu f_w.
\]
It then follows from Lemma~\ref{lem:general-stein} that
\[
S_w(\vx,\spa) \le \frac{2}{1-\ctrct} \, \bbE_\nu \left[ W_{1,d}(P(\sigma,\cdot), Q(\sigma,\cdot)) \right]
\]
where $P,Q$ are the Glauber dynamics for $\mu,\nu$ respectively.
For every $\sigma \in \Omega^{(\vx,\spa)}$ we have
\[
W_{1,d}(P(\si,\cdot), Q(\si,\cdot)) \le \frac{w(\vx)}{n},
\]
since if we couple the configurations $\sigma_1,\sigma_2$ after one step of $P,Q$ respectively by picking the same vertex to update, then $d(\sigma_1,\sigma_2) = w(x)$ only when the site $\vx$ is picked, and $\sigma_1 = \sigma_2$ otherwise.
Therefore, we get $S_w(\vx,\spa) \le \frac{2 w(\vx)}{(1-\ctrct)n}$ for every $(\vx,\spa) \in \VSpairs$, implying that $\lambda_1(J) \le \frac{2}{(1-\ctrct)n}$.
The same argument works for $\mu^\tau$ under any pinning $\tau$ as well, which establishes spectral independence.
\end{proof}

Next we consider the second part of Theorem~\ref{thm:glauber}.
Recall that a metric $d$ on $\Omega$ is said to be $\gamma$-equivalent (to the Hamming metric) for some $\gamma > 1$ if for all $\sigma,\tau \in \Omega$
\[
\frac{1}{\gamma} \hamming{\sigma}{\tau} \le d(\sigma, \tau) \le \gamma \hamming{\sigma}{\tau}.
\]
To prove the second part, we follow the proof approach for Theorem~\ref{thm:glauber-hamming}, and in particular the right-hand side of \eqref{eq:S_2proof} below (analogous to \eqref{eq:S_proof}) can be upper bounded using the $\gamma$-equivalence.

\begin{proof}[Proof of Theorem~\ref{thm:glauber}(2)]
For every $(\vx, \spa) \in \VSpairs$, we deduce from Lemma~\ref{lem:general-stein} that
\begin{equation}\label{eq:S_2proof}
S(\vx,\spa) = \bbE_\nu f - \bbE_\mu f \le \frac{L_d(f)}{1-\ctrct} \, \bbE_\nu \left[ W_{1,d}(P(\sigma,\cdot), Q(\sigma,\cdot)) \right]
\end{equation}
where $S(\vx,\spa)$ and $f$ are defined by \eqref{eq:S_def}, \eqref{eq:f_def} respectively,
and $P, Q$ are the Glauber dynamics for $\mu$ and $\nu = \mu^{(\vx,\spa)} = \mu(\cdot \mid \sigma_\vx = \spa)$ respectively.
Since $d$ is $\gamma$-equivalent, for all $\sigma,\tau \in \Omega$ we have
\[
|f(\sigma) - f(\tau)| \le 2 \hamming{\sigma}{\tau} \le 2\gamma d(\sigma,\tau);
\]
this shows $L_d(f) \le 2\gamma$.
Meanwhile, by the definition of $1$-Wasserstein distance for every $\sigma \in \Omega^{(\vx,\spa)}$ we have
\begin{multline*}
W_{1,d}(P(\sigma,\cdot), Q(\sigma,\cdot))
= \inf \left\{ \bbE_\pi [ d(\si,\tau) ] \mid \pi \in \cC(P(\sigma,\cdot), Q(\sigma,\cdot)) \right\} \\
\le \gamma \inf \left\{ \bbE_\pi [ d_{\mathrm{H}}(\si,\tau) ] \mid \pi \in \cC(P(\sigma,\cdot), Q(\sigma,\cdot)) \right\}
= \gamma W_{1,d_{\mathrm{H}}}(P(\sigma,\cdot), Q(\sigma,\cdot)) \le \frac{\gamma}{n}
\end{multline*}
where the last inequality is \eqref{eq:W_proof}.
Thus, we obtain from \eqref{eq:S_2proof} that $S(\vx,\spa) \le \frac{2\gamma^2}{(1-\ctrct)n}$. The rest of the proof is the same as Theorem~\ref{thm:glauber-hamming}.
\end{proof}

\subsubsection{Application: Dobrushin uniqueness condition}
\label{subsec:dob-si}


%


As an application of Theorem~\ref{thm:glauber}(1), we show that the Dobrushin uniqueness condition, as well as its generalizations \cite{Hayes,DGJ09}, implies spectral independence.
Recall that the Dobrushin dependency matrix $R$ is a $|V| \times |V|$ matrix defined as $R(\vx,\vx) = 0$ and
\[
R(\vx,\vy) =
\max \left\{ \tv{\mu_\vy(\cdot \mid \sigma)}{\mu_\vy(\cdot \mid \tau)}: (\si,\tau) \in \cS_{\vx,\vy} \right\} \text{~for~} x \neq y
\]
where $\cS_{\vx,\vy}$ is the set of pairs of configurations on $V \setminus \{\vy\}$ that differ at most at $\vx$.
Denote the spectral radius of a square matrix $M$ by $\rho(M)$.
If $M$ is nonnegative, then $\rho(M)$ is an eigenvalue of $M$ by the Perron-Frobenius theorem.
We prove Theorem~\ref{thm:dep-inf} from the introduction. 

\dep*

\begin{remark}\label{rmk:dob}
If $\|R\|_\infty < 1$, then the Glauber dynamics mixes rapidly by a simple application of the path coupling method of Bubley and Dyer \cite{BubleyDyer}.
The same is true under the Dobrushin uniqueness condition, i.e., when $\|R\|_1 < 1$.
Hayes \cite{Hayes} generalized the condition to the spectral norm $\|R\|_2 < 1$.
Dyer, Goldberg, and Jerrum \cite{DGJ09} further improved it to $\|R\| < 1$ for any matrix norm (where the mixing time depends logarithmly on the norm of the all-one matrix).
Our condition $\rho(R) < 1$ in Theorem~\ref{thm:dep-inf} is technically better than previous works since for a nonnegative matrix $R$ one has $\rho(R) \le \|R\|$ for any matrix norm, and the inequality can be strict for all norms when $R$ is not irreducible; see \cite{DGJ09} for related discussions.
Finally, we point out that if $R$ is symmetric then $\rho(R) = \|R\|_2$.
\end{remark}

It is known that the Glauber dynamics is contractive for some weighted Hamming metric if the weight vector satisfies a spectral condition related to $R$.

\begin{lemma}[{\cite[Lemma 20]{DGJ09}}]
\label{lem:hayes}
If $w \in \R_+^V$ is a positive vector such that $R w \le (1 -\eps) w$ entrywisely,
then $\mu$ is $(1-\eps/n)$-contractive w.r.t.\ the Glauber dynamics and the $w$-weighted Hamming metric $d = d_w$.
\end{lemma}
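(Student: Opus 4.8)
This is the classical path-coupling computation for the Glauber dynamics, specialized to a weighted Hamming metric; the plan is to reduce via path coupling to pairs at Hamming distance one, and then carry out the one-step coupling estimate for such a pair using the definition of the Dobrushin matrix $R$ and the hypothesis $Rw\le(1-\eps)w$. I first treat the unpinned case. Fix $X_0,Y_0\in\Omega$ with $\hamming{X_0}{Y_0}=1$, say they differ only at the vertex $x$, so $d_w(X_0,Y_0)=w(x)$. By the path-coupling theorem of Bubley and Dyer \cite{BubleyDyer} (see also \cite{LevinPeresWilmer}), and using that $\mu$ is totally connected so that $\Omega$ is connected under single-site flips, it suffices to exhibit a coupling of one step of the Glauber dynamics with
\[
\bbE\big[d_w(X_1,Y_1)\mid X_0,Y_0\big]\le\big(1-\tfrac{\eps}{n}\big)\,w(x).
\]

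\textbf{The one-step coupling.} I would couple the two copies by selecting the \emph{same} uniformly random vertex $v$ to update in both. If $v=x$: since $X_0$ and $Y_0$ agree on $V\setminus\{x\}\supseteq N(x)$, the two update laws $\mu_x(\cdot\mid (X_0)_{V\setminus\{x\}})$ and $\mu_x(\cdot\mid (Y_0)_{V\setminus\{x\}})$ coincide, so we couple them to produce $X_1=Y_1$, hence $d_w(X_1,Y_1)=0$. If $v=y\neq x$: couple the spins resampled at $y$ via the maximal coupling of $\mu_y(\cdot\mid (X_0)_{V\setminus\{y\}})$ and $\mu_y(\cdot\mid (Y_0)_{V\setminus\{y\}})$; these two configurations on $V\setminus\{y\}$ differ only at $x$, hence form a pair in $\cS_{x,y}$, so the probability that the resampled spins at $y$ disagree equals the total variation distance of the two laws, which is at most $R(x,y)$. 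Since $x$ is not touched, its disagreement survives, so $\bbE[d_w(X_1,Y_1)\mid v=y]\le w(x)+R(x,y)\,w(y)$.

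\textbf{Averaging and pinnings.} Averaging over the $n$ equally likely choices of $v$, and using $R(x,x)=0$ and $R\ge0$ entrywise,
\[
\bbE\big[d_w(X_1,Y_1)\mid X_0,Y_0\big]\le\frac1n\Big((n-1)\,w(x)+\sum_{y\neq x}R(x,y)\,w(y)\Big)=\frac1n\big((n-1)\,w(x)+(Rw)(x)\big),
\]
and $(Rw)(x)\le(1-\eps)\,w(x)$ bounds this by $\tfrac1n(n-\eps)\,w(x)=\big(1-\tfrac{\eps}{n}\big)\,w(x)$; path coupling then upgrades the estimate to all pairs, proving the unpinned case. For a pinning $\t\in\Pinning$ on $U\subset V$ the argument is verbatim with $(\mu,\ \text{Glauber})$ replaced by $(\mu^\t,\ P^\t_{\textsc{gl}})$: selecting a vertex of $U$ yields a no-op, which only decreases the expected distance, and the Dobrushin matrix $R^\t$ of $\mu^\t$ satisfies $R^\t\le R$ entrywise, so $R^\t w\le Rw\le(1-\eps)w$ still holds.

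\textbf{Main obstacle.} There is no deep difficulty. The points that need care are: the identification of the relevant pair of boundary configurations on $V\setminus\{y\}$ with an element of $\cS_{x,y}$, so that the maximal-coupling disagreement probability is controlled by the single entry $R(x,y)$; the bookkeeping of pinned-vertex no-ops under this paper's convention for $P^\t_{\textsc{gl}}$; and the standard technicality that path coupling a priori yields contraction for the path metric generated by the edge lengths $w(x)$ on the single-flip graph of $\Omega^\t$, which is identified with $d_w$ in the settings where the lemma is applied, exactly as in \cite{DGJ09}.
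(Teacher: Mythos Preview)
The paper does not supply its own proof of this lemma; it simply quotes it from \cite[Lemma~20]{DGJ09}. Your argument is exactly the standard path-coupling computation that underlies that result, and it is correct. The only point worth flagging is the one you already raise at the end: path coupling literally yields contraction for the \emph{path metric} on $\Omega^\tau$ induced by the edge weights $w(x)$, which in constrained state spaces can exceed $d_w$. For the purposes of this paper that subtlety is harmless, since the proof of Theorem~\ref{thm:glauber}(1) only ever evaluates the metric on pairs at Hamming distance one (where path metric and $d_w$ agree) and uses the Lipschitz bound $L_d(f_w)\le 2$, which only improves when $d\ge d_w$.
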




The following fact about nonnegative matrices is helpful.
\begin{lemma}[{\cite[Example 7.10.2]{Meyer}}]
\label{lem:Meyer}
If $M,N \in \R_{+}^{n \times n}$ are two nonnegative square matrices such that $M \le N$ entrywisely, then $\rho(M) \le \rho(N)$.
\end{lemma}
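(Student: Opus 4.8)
The plan is to deduce monotonicity of the spectral radius from Gelfand's formula $\rho(A)=\lim_{k\to\infty}\|A^k\|^{1/k}$, by choosing a matrix norm that is both submultiplicative (so Gelfand's formula applies) and monotone with respect to the entrywise order on nonnegative matrices, and by noting that entrywise domination of nonnegative matrices is preserved under taking powers.

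First I would record the elementary fact that $0\le M\le N$ entrywise implies $0\le M^k\le N^k$ entrywise for every integer $k\ge 1$. This is an immediate induction on $k$: writing $(M^{k+1})_{ij}=\sum_\ell (M^k)_{i\ell}\,M_{\ell j}$ and $(N^{k+1})_{ij}=\sum_\ell (N^k)_{i\ell}\,N_{\ell j}$, each summand $(M^k)_{i\ell}M_{\ell j}$ is a product of nonnegative reals dominated respectively by $(N^k)_{i\ell}$ (inductive hypothesis) and $N_{\ell j}$, hence dominated by $(N^k)_{i\ell}N_{\ell j}$; summing over $\ell$ gives $(M^{k+1})_{ij}\le (N^{k+1})_{ij}$, and nonnegativity is clear.

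Next I would use the operator norm $\|A\|_\infty=\max_i\sum_j |A_{ij}|$, the maximum absolute row sum. It is submultiplicative, so Gelfand's formula $\rho(A)=\lim_{k\to\infty}\|A^k\|_\infty^{1/k}$ holds for any square matrix $A$; and it is monotone on nonnegative matrices, since $0\le A\le B$ entrywise gives $\sum_j A_{ij}\le\sum_j B_{ij}$ for each $i$ and hence $\|A\|_\infty\le\|B\|_\infty$. Combining this with the previous step, $\|M^k\|_\infty\le\|N^k\|_\infty$ for every $k$, so $\|M^k\|_\infty^{1/k}\le\|N^k\|_\infty^{1/k}$; passing to the limit and invoking Gelfand's formula for both $M$ and $N$ yields $\rho(M)\le\rho(N)$.

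There is essentially no real obstacle here; the only point requiring any care is the choice of norm, which must be simultaneously submultiplicative and monotone for the entrywise order, and the maximum-row-sum norm serves. An alternative route would avoid Gelfand's formula via the Collatz--Wielandt characterization $\rho(A)=\inf_{x>0}\max_i (Ax)_i/x_i$ valid for nonnegative $A$: since $M\le N$ gives $(Mx)_i\le (Nx)_i$ for every strictly positive $x$, taking the infimum over $x$ yields the inequality directly. That route, however, hides its subtlety in the validity of the min-characterization for possibly reducible nonnegative matrices, whereas the Gelfand-formula argument sidesteps irreducibility entirely; I would therefore present the Gelfand-formula proof as the main one and mention the Collatz--Wielandt argument only as a remark.
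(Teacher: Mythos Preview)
Your proof is correct. The paper does not actually prove this lemma; it simply cites it as a known fact from Meyer's textbook (Example 7.10.2), so there is no ``paper's own proof'' to compare against. Your Gelfand-formula argument via the monotone submultiplicative norm $\|\cdot\|_\infty$ is a clean self-contained justification, and your remark about the Collatz--Wielandt alternative (and its irreducibility caveat) is accurate.
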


We give below the proof of Theorem~\ref{thm:dep-inf}.

\begin{proof}[Proof of Theorem~\ref{thm:dep-inf}]
Consider first the case that there is no pinning.
If the Dobrushin dependency matrix $R$ is irreducible, then the right principal eigenvector $w$ associated with the eigenvalue $\rho(R)$ satisfies $R w = \rho(R) w \le (1-\eps) w$ and $w > 0$ by the Perron-Frobenius theorem.
Hence, Lemma~\ref{lem:hayes} and (the proof of) Theorem~\ref{thm:glauber}(1) immediately yield $\lambda_1(J) \le 2/\eps$.
However, if $R$ is reducible, we cannot use the principal eigenvector directly since it may have zero entries.
We instead consider the matrix $R_\delta = R + \delta O$ where $O$ is the all-one matrix and $\delta>0$ is a tiny constant.
Let $w_\delta$ be the right principal eigenvector of $R_\delta$ associated with the eigenvalue $\rho(R_\delta)$.
Since $R_\delta$ is irreducible, $w_\delta > 0$ by the Perron-Frobenius theorem.
Moreover, $R w_\delta \le R_\delta w_\delta = \rho(R_\delta) w_\delta$.
Since $\lim_{\delta \to 0} R_\delta = R$, we have $\lim_{\delta \to 0} \rho(R_\delta) = \rho(R)$; see, e.g., Remark 3.4 in \cite{Alen}.
Thus, $\rho(R_\delta) < 1$ for sufficiently small $\delta$.
Then by Lemma~\ref{lem:hayes} and Theorem~\ref{thm:glauber}(1), for $\delta$ small enough, we have $\lambda_1(J) \le 2/(1-\rho(R_\delta))$.
Taking $\delta \to 0$ and using the assumption that $\rho(R) \le 1- \eps$, we obtain $\lambda_1(J) \le 2/\eps$.

Next, consider the conditional measure $\mu^\tau$ with a pinning $\tau$ on a subset $U \subset V$.
Let $R^\tau$ be the Dobrushin dependency matrix for $\mu^\tau$;
note that by definition $R^\tau(\vx, \vy) = 0$ if $\vx \in U$ or $\vy \in U$, and $R^\tau(\vx,\vy) \le R(\vx, \vy)$ for all $\vx, \vy \in V$.
We deduce from Lemma~\ref{lem:Meyer} that $\rho(R^\tau) \le \rho(R) \le 1- \eps$ and thus this is reduced to the no-pinning case.
Therefore, we get $\lambda_1(J^\tau) \le 2/\eps$ for all $\tau$ and spectral independence then follows.
\end{proof}

\subsection{Contraction for general Markov chains and general metrics}
\label{subsec:general-general}

In this section, we generalize Theorem~\ref{thm:glauber-hamming} to arbitrary ``local'' Markov chains and arbitrary metrics close to the Hamming metric.
In particular, we prove Theorem~\ref{thm:general}.


Consider a collection of Markov chains $\mathcal{P} = \{P^\tau: \tau \in \Pinning\}$ associated with $\mu$, where each $P^\tau$ is a Markov chain on $\Omega^\tau$ with stationary distribution $\mu^\tau$.
Intuitively, one can think of $\mathcal{P}$ as the same dynamics applied to all conditional distributions $\mu^\tau$; for example, $\mathcal{P}$ can be the collection of Glauber dynamics for all $\mu^\tau$'s.
We are particularly interested in local dynamics; these are Markov chains that make local updates on the configuration in each step, e.g., Glauber dynamics for spin systems or flip dynamics for colorings.
Alternatively, we can describe local dynamics as those insensitive to pinnings; that is, if the dynamics is applied to both $\mu$ and $\mu^{(\vx,\spa)}$ with a pinning $\sigma_\vx = \spa$, then with high probability there is no difference in the two chains or the discrepancy caused by the pinning will not propagate.
This motivates the following definition. 

\begin{definition}\label{def:local-chains}
We say a collection $\mathcal{P}$ of Markov chains associated with $\mu$ is \emph{$\loc$-local} if for any two adjacent pinnings $\tau \in \Pinning$ and $\tau' = \tau \cup (\vx,\spa)$ where $(\vx,\spa) \in \VSpairs^\tau$ (i.e., $\tau'$ combines $\tau$ and the pinning $\sigma_\vx = \spa$), and for all $\sigma \in \Omega^{\tau'}$, we have
\[
W_{1,d_{\mathrm{H}}} (P^\tau(\sigma,\cdot), P^{\tau'} (\sigma,\cdot)) \le \loc.
\]
\end{definition}

We show that for such local dynamics contraction implies spectral independence.
\begin{theorem}\label{thm:general-general}
If $\mu$ is $\ctrct$-contractive w.r.t.\ a $\loc$-local collection $\mathcal{P}$ of Markov chains and a $\gamma$-equivalent metric $d$ for some $\ctrct\in(0,1)$,
then $\mu$ is spectrally independent with constant $\eta =\frac{2\gamma^2\loc}{1-\ctrct}$.
\end{theorem}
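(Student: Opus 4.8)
The plan is to follow the warm-up proof of Theorem~\ref{thm:glauber-hamming} essentially verbatim, replacing only the two places where the Hamming-specific bounds were used. As before, it suffices to treat the no-pinning case, since the argument applies unchanged to each $\mu^\tau$ (note that if $\mathcal{P}$ is $\loc$-local and $\ctrct$-contractive, then so is the restricted collection $\{P^{\tau\cup\sigma}: \sigma\in\Pinning \text{ on } V\setminus U\}$ associated with $\mu^\tau$). Fix $(\vx,\spa)\in\VSpairs$, let $\nu=\mu^{(\vx,\spa)}=\mu(\cdot\mid\sigma_\vx=\spa)$, and let $f$ be the sign function defined in~\eqref{eq:f_def}, so that $S(\vx,\spa)=\bbE_\nu f-\bbE_\mu f$ and $L_{d_{\mathrm H}}(f)\le 2$. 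Since $d$ is $\gamma$-equivalent, we get $L_d(f)\le 2\gamma$ exactly as in the proof of Theorem~\ref{thm:glauber}(2).

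Now I would apply Lemma~\ref{lem:general-stein} with $P=P^{\emptyset}$ (the chain in $\mathcal{P}$ associated with $\mu$ itself, for which $\mu$ is $\ctrct$-contractive) and $Q=P^{(\vx,\spa)}$ (the chain associated with $\nu$). This yields
\[
S(\vx,\spa) \le \frac{L_d(f)}{1-\ctrct}\,\bbE_\nu\!\left[W_{1,d}(P^{\emptyset}(\sigma,\cdot),P^{(\vx,\spa)}(\sigma,\cdot))\right].
\]
The one remaining point is to bound the inner Wasserstein distance. For every $\sigma\in\Omega^{(\vx,\spa)}=\supp(\nu)$, the pinnings $\emptyset$ and $(\vx,\spa)$ are adjacent in the sense of Definition~\ref{def:local-chains}, so $\loc$-locality gives $W_{1,d_{\mathrm H}}(P^{\emptyset}(\sigma,\cdot),P^{(\vx,\spa)}(\sigma,\cdot))\le \loc$, and then $\gamma$-equivalence (coupling optimally for $d_{\mathrm H}$ and paying a factor $\gamma$) gives
\[
W_{1,d}(P^{\emptyset}(\sigma,\cdot),P^{(\vx,\spa)}(\sigma,\cdot)) \le \gamma\, W_{1,d_{\mathrm H}}(P^{\emptyset}(\sigma,\cdot),P^{(\vx,\spa)}(\sigma,\cdot)) \le \gamma\loc.
\]
Combining, $S(\vx,\spa)\le \frac{2\gamma}{1-\ctrct}\cdot\gamma\loc = \frac{2\gamma^2\loc}{1-\ctrct}$ for every $(\vx,\spa)$, hence $\lambda_1(J)\le\|J\|_\infty\le \frac{2\gamma^2\loc}{1-\ctrct}$. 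Running the identical argument for every pinning $\tau$ gives $\lambda_1(J^\tau)\le\frac{2\gamma^2\loc}{1-\ctrct}$, which is spectral independence with the claimed constant.

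I do not expect a genuine obstacle here: the real content was already packaged into Lemma~\ref{lem:general-stein} (contraction plus one-step closeness controls the Wasserstein distance of stationary measures) and the abstraction of $\loc$-locality. The only mild subtlety is bookkeeping — making sure $P$ in Lemma~\ref{lem:general-stein} is the member of $\mathcal{P}$ for which contraction is assumed and that $Q$ has the right stationary distribution $\nu$, and that the sign-function $f$ is only needed to be $2$-Lipschitz for $d_{\mathrm H}$ before converting via $\gamma$-equivalence. To finish, one should then derive Theorem~\ref{thm:general} from Theorem~\ref{thm:general-general} by checking that a select-update dynamics with maximum block size $M$ and maximum per-step vertex-selection probability $D$ is $(DM)$-local: coupling two copies of the chain (one run under pinning $\tau$, one under $\tau'=\tau\cup(\vx,\spa)$) by using the same block choice and the same update randomness, a discrepancy of Hamming size at most $M$ is created only when the selected block contains $\vx$, an event of probability at most $D$, so $W_{1,d_{\mathrm H}}(P^\tau(\sigma,\cdot),P^{\tau'}(\sigma,\cdot))\le DM$; plugging $\loc = DM$ into Theorem~\ref{thm:general-general} gives $\eta = \frac{2\gamma^2 DM}{1-\ctrct}$ as stated.
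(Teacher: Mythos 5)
Your proposal is correct and follows essentially the same route as the paper: fix a pair $(\vx,\spa)$, use the sign function $f$ from \eqref{eq:f_def} so that $S^\tau(\vx,\spa)=\bbE_{\mu^{\tau'}}f-\bbE_{\mu^\tau}f$, apply Lemma~\ref{lem:general-stein} to the chains at the adjacent pinnings $\tau$ and $\tau'=\tau\cup(\vx,\spa)$, and convert both the Lipschitz constant ($L_d(f)\le 2\gamma$) and the one-step Wasserstein distance ($\le\gamma\loc$ via $\loc$-locality) using $\gamma$-equivalence, giving $\lambda_1(J^\tau)\le\|J^\tau\|_\infty\le\frac{2\gamma^2\loc}{1-\ctrct}$. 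The paper simply carries the pinning $\tau$ through the argument rather than phrasing it as a reduction to the no-pinning case, but this is only a cosmetic difference, and your appended derivation of Theorem~\ref{thm:general} via $(DM)$-locality matches the paper's as well.
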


\begin{proof}
The proof is similar to that of Theorems~\ref{thm:glauber-hamming} and \ref{thm:glauber}(2).
For an arbitrary pinning $\tau$ and $(\vx,\spa) \in \VSpairs^\tau$, we define
\begin{equation}
S^\tau(\vx, \spa) = \sum_{(\vy, \spb) \in \VSpairs^\tau} | \ALOi^\tau(\vx, \spa; \vy, \spb) |
\end{equation}
and
\begin{equation}
f^\tau(\si) = \sum_{(\vy, \spb)\in \VSpairs^\tau} t^\tau(\vx, \spa; \vy, \spb) \, \mathbf{1}_{\{\sigma_\vy = \spb\}}
\end{equation}
where $t^\tau(\vx, \spa; \vy, \spb) = \sgn(J^\tau(\vx, \spa; \vy, \spb))$;
these definitions are analogous to \eqref{eq:S_def} and \eqref{eq:f_def} with pinning $\tau$.
Let $\tau' = \tau \cup (\vx,\spa)$.
Then we deduce from Lemma~\ref{lem:general-stein} that
\[
S^\tau(\vx,\spa) = \bbE_{\mu^{\tau'}} f^\tau - \bbE_{\mu^\tau} f^\tau \le \frac{L_d(f^\tau)}{1-\ctrct} \, \bbE_{\mu^{\tau'}} \left[ W_{1,d}(P^\tau(\sigma,\cdot), P^{\tau'}(\sigma,\cdot)) \right].
\]
As shown in the proof of Theorem~\ref{thm:glauber}(2), since $d$ is $\gamma$-equivalent to the Hamming metric we have $L_d(f^\tau) \le \gamma L_{d_{\mathrm{H}}}(f^\tau) \le 2\gamma$ and
for all $\sigma \in \Omega^{\tau'}$ we have
\[
W_{1,d}(P^\tau(\sigma,\cdot), P^{\tau'}(\sigma,\cdot)) \le \gamma W_{1,d_{\mathrm{H}}}(P^\tau(\sigma,\cdot), P^{\tau'}(\sigma,\cdot)) \le \gamma \loc
\]
using the $\loc$-locality of $\mathcal{P}$.
Therefore, we obtain that $S^\tau(\vx,\spa) \le \frac{2\gamma^2\loc}{1-\ctrct}$ for all $(\vx,\spa) \in \VSpairs^\tau$.
This yields $\lambda_1(J^\tau) \le \frac{2\gamma^2\loc}{1-\ctrct}$ and spectral independence follows.
\end{proof}

To better understand local dynamics, we consider a very general type of Markov chains which we call \emph{select-update dynamics};
examples include the Glauber dynamics, heat-bath block dynamics, and flip dynamics.
Let $\mathcal{B}$ be a collection of blocks associated with the select-update dynamics
and fix some pinning $\tau$.
Given the current configuration $\sigma^t \in \Omega^\tau$, the next configuration $\sigma^{t+1}$ is generated as follows:
\begin{enumerate}
\item \textsc{Select}: Select a block $B \in \mathcal{B}$ from some distribution $\select_t$ over $\mathcal{B}$;
\item \textsc{Update}: Resample the configuration on $B$ from some distribution $\nu^t_B$.
\end{enumerate}
We try to make weakest assumptions on the selection rule $\select_t$ and the update rule $\nu^t_B$: the selection distribution $\select_t$ is allowed to depend on the current configuration $\sigma^t$ but is independent of the pinning $\tau$, and the update distribution $\nu^t_B$ is allowed to depend on the whole current configuration $\sigma^t$ and the part of the pinning $\tau$ contained in $B$.
In particular, the heat-bath block dynamics is a special case of the select-update dynamics: the selection rule $\select_t = \alpha$ is a fixed distribution over $\mathcal{B}$ and the update rule $\nu^t_B$ is the marginal distribution on $B$ conditioned on $\sigma^t$ outside $B$ and the pinning $\tau$ in $B$.

\begin{remark}\label{rmk:pinning-SU}
The assumption that the selection rule $\select_t$ is independent of the pinning $\tau$ is not necessary, but it is helpful for stating and proving our theorems and does not weaken our results.
Roughly speaking, we only require that the collection of the select-update dynamics is the same dynamics applied to all $\mu^\tau$'s, and the selection rule $\select_t$ can be conditioned on containing at least one unpinned vertex.
See the discussions in Remark~\ref{rmk:glauber-pinning} for the Glauber dynamics.
\end{remark}

We write $\mathcal{P}_{\mathcal{B}}$ for a collection of select-update dynamics associated with $\mu$.
Denote the maximum block size of $\mathcal{B}$ by
\[
M 
= \max_{B \in \mathcal{B}} |B|,
\]
and the maximum probability of a vertex being selected in Step 1 by
\[
D 
= \max_{\select_t} \max_{\vx} \sum_{B \in \mathcal{B}: \vx \in B} \select_t(B)
\]
where we maximize over all selection rules $\select_t$ that can occur. 
We can show that the select-update dynamics $\mathcal{P}_{\mathcal{B}}$ is $\loc$-local with $\loc = DM$; using this and Theorem~\ref{thm:general-general} we establish Theorem~\ref{thm:general}, which we restate here for convenience.

\general*

\begin{proof}
It suffices to show that the select-update dynamics $\mathcal{P}_{\mathcal{B}}$ is $\loc$-local with $\loc = DM$; the theorem would then follows immediately from Theorem~\ref{thm:general-general}.
Consider two adjacent pinnings $\tau$ and $\tau' = \tau \cup (\vx,\spa)$ where $(\vx,\spa) \in \VSpairs^\tau$.
For $\sigma \in \Omega^{\tau'}$, let $\sigma_1$ and $\sigma_2$ be the two configurations obtained from $\sigma$ after one step of $P^\tau$ and $P^{\tau'}$ respectively.
We couple $\sigma_1$ and $\sigma_2$ by picking the same block $B \in \mathcal{B}$ in Step 1 of the select-update dynamics.
If $\vx \notin B$, then we have $\sigma_1 = \sigma_2$.
Meanwhile, if $\vx \in B$, which happens with probability at most $D$, we have $\hamming{\sigma_1}{\sigma_2} \le |B| \le M$.
Therefore,
\[
W_{1,d_{\mathrm{H}}} (P^\tau(\sigma,\cdot), P^{\tau'} (\sigma,\cdot)) \le DM.
\]
This establishes the $(DM)$-locality for $\mathcal{P}_{\mathcal{B}}$.
\end{proof}

\begin{remark}\label{rmk:block-general}
If we further assume that in Step 2 the select-update dynamics resamples a block independently for each of its components (i.e., the update rule $\nu^t_B$ is a product distribution over all components of the induced subgraph $G[B]$), then in Theorem~\ref{thm:general} the maximum block size $M$ can be replaced by the maximum component size of all blocks.
\end{remark}

%


\subsubsection{Application: flip dynamics for colorings}
\label{subsec:coloring}
In this section we establish spectral independence for colorings utilizing Theorem~\ref{thm:general}.

\begin{theorem}\label{thm:coloring-SI}
Let $\eps_0\approx 10^{-5}>0$ be a fixed constant.
Let $\Delta,q \ge 3$ be integers and $q > (\frac{11}{6}-\eps_0) \Delta$.
Then there exists $\eta = \eta(\Delta,q)>0$ such that the following holds.

Let $\mu$ be the uniform distribution over all proper $q$-colorings of a graph $G=(V,E)$ of maximum degree at most $\Delta$.
Then $\mu$ is spectrally independent with constant $\eta$.
\end{theorem}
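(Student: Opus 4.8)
\textbf{Proof proposal for Theorem~\ref{thm:coloring-SI}.}
The plan is to deduce this statement directly from Theorem~\ref{thm:general}, fed with the contractive coupling for the \emph{flip dynamics} of Chen, Delcourt, Moitra, Perarnau and Postle~\cite{CDMPP19}. First I would check that the flip dynamics is a select-update dynamics in the sense of Section~\ref{subsec:general-general}: in each step one picks a seed vertex $v\in V$ uniformly at random, runs the (randomized) flip-construction procedure to produce a connected \emph{flip set} $B\ni v$, and resamples the colors on $B$ according to the associated flip permutation. The relevant structural facts are that (i) flip sets have size bounded by a constant $M_0$, so $M=\max_{B\in\mathcal B}|B|=O(1)$, and (ii) a fixed vertex $y$ can belong to the chosen block $B$ only if the seed $v$ lies within graph-distance $M_0$ of $y$; summing over the $O_\Delta(1)$ admissible seeds, each chosen with probability $1/n$, gives $D=O_\Delta(1/n)$, i.e.\ $nD=O_\Delta(1)$. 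Note that the update rule of the flip dynamics is \emph{not} the conditional Gibbs measure on $B$, but the select-update framework of Theorem~\ref{thm:general} imposes no restriction on the update rule beyond correctness of the stationary distribution, which is established in~\cite{CDMPP19}.

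Next I would invoke the main coupling estimate of~\cite{CDMPP19}: when $q>(\tfrac{11}{6}-\eps_0)\Delta$ there is a vertex-weighted Hamming metric $d$ on the set of proper $q$-colorings of $G$, with weights bounded above and below by positive constants depending only on $\Delta$ and $q$ (hence $d$ is $\gamma$-equivalent to the Hamming metric for some $\gamma=\gamma(\Delta,q)$), together with a one-step coupling of the flip dynamics for which
\[
\bbE\!\left[d(X_1,Y_1)\,\middle|\,X_0,Y_0\right]\le\Big(1-\tfrac{\eps}{n}\Big)\,d(X_0,Y_0)
\]
for all pairs $X_0,Y_0$, where $\eps=\eps(\Delta,q)>0$. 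The one point requiring care is that this bound must hold not only for the uniform distribution $\mu$ but for every conditional measure $\mu^\tau$, $\tau\in\Pinning$, with a rate $\ctrct_\tau\le 1-\eps/n$ uniform in $\tau$: this is the case because the argument of~\cite{CDMPP19} is a path-coupling estimate that inspects only the local neighbourhood of a single disagreement, and freezing the colors on a pinned set $U$ merely restricts the collection of local configurations that can occur (it never produces a worse local picture), so the per-step drift bound is preserved. Consequently $\mu$ is $\ctrct$-contractive w.r.t.\ the flip dynamics and the $\gamma$-equivalent metric $d$, with $\ctrct=1-\eps/n$.

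Finally I would apply Theorem~\ref{thm:general} with these $\ctrct,\gamma,M,D$ to conclude that $\mu$ is spectrally independent with constant
\[
\eta=\frac{2\gamma^2 DM}{1-\ctrct}=\frac{2\gamma^2(nD)M}{\eps}=\eta(\Delta,q),
\]
a quantity independent of $n$, which is exactly the assertion. (If one wished to avoid bounding $M$ directly, Remark~\ref{rmk:block-general} permits replacing $M$ by the maximum component size of a flip set, but this is not needed here since $M=O(1)$ already; one could also appeal to Theorem~\ref{thm:general-general} via $\Phi=DM$ as in Remark~\ref{rmk:glauber-pinning}.)

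I expect the main obstacle to lie entirely in the second paragraph: namely, extracting from~\cite{CDMPP19} a clean statement of one-step contraction with an explicit $\gamma$-equivalent metric and, above all, verifying that the contraction rate is uniform over all pinnings $\tau$. The remaining steps---recognizing the flip dynamics as a select-update dynamics with $M=O(1)$ and $D=O(1/n)$, and substituting into Theorem~\ref{thm:general}---are routine.
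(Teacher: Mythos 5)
Your proposal is correct and follows essentially the same route as the paper: view the flip dynamics as a select-update dynamics with $M=O(1)$ (since $p_s=0$ for $s\ge 7$) and $D=O_\Delta(1/n)$, invoke the contraction of \cite{CDMPP19} in a metric equivalent to Hamming (the paper quotes it as a $2$-equivalent metric, uniform over pinnings since a pinning just induces a list-coloring instance, with frozen bicolored components), and plug into Theorem~\ref{thm:general}. The only care point you flag—uniformity of the contraction over pinnings—is handled in the paper in exactly the spirit you describe.
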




To apply Theorem~\ref{thm:general}, we need a contractive Markov chain for sampling colorings of a graph.
Vigoda considered the \emph{flip dynamics} \cite{Vig00} and showed that it is contractive for the Hamming metric when the number of colors $q > \frac{11}{6}\Delta$.
Recently, \cite{CDMPP19} improved the bound to $q > (\frac{11}{6} - \eps_0) \Delta$ for a fixed tiny constant $\eps_0 \approx 10^{-5}$, using variable-length coupling or an alternative metric.
Our result on spectral independence builds upon contraction results for the flip dynamics.

We first describe the flip dynamics.
Let $\Omega$ be the set of all proper $q$-colorings of $G$.
Fix a pinning $\tau$ on $U \subset V$.
For a coloring $\si \in \Omega$, a vertex $\vx \in V$, and a color $\spa \in [q]$, denote by $L_\si(\vx,\spa)$ the bicolored component containing $\vx$ with colors $\spa$ and $\si_\vx$; that is, the set of all vertices which can be reached from $\vx$ through an alternating $(\si_\vx,\spa)$-colored path.
Given the coloring $\si^t$ at time $t$, the flip dynamics with pinning $\tau$ generates the next coloring $\si^{t+1}$ as follows:
\begin{enumerate}
\item Pick a vertex $\vx \in V$ u.a.r.\ and a color $\spa \in [q]$ u.a.r.;
\item If $L_{\si^t}(\vx, \spa)$ contains a pinned vertex (i.e., $L_{\si^t}(\vx, \spa) \cap U \neq \emptyset$), then $\si^{t+1} = \si^t$;
\item If all vertices in $L_{\si^t}(\vx, \spa)$ are free (i.e., $L_{\si^t}(\vx, \spa) \cap U = \emptyset$), then flip the two colors of $L_{\si^t}(\vx, \spa)$ with probability $p_s/s$ where $s = |L_{\si^t}(\vx, \spa)|$.
\end{enumerate}

The flip dynamics is specified by the flip parameters $\{p_s\}_{s=1}^\infty$. In \cite{Vig00} and the recent improvement \cite{CDMPP19}, the flip parameters are chosen in such a way that $p_s = 0$ for all $s \ge 7$; i.e., in each step at most six vertices change their colors.
We set the flip parameters as in Observation 5.1 from \cite{CDMPP19}, where the authors established contraction of the flip dynamics using the path coupling method.

\begin{lemma}[\cite{CDMPP19}]
\label{lem:flip_dyn}
Under the assumptions of Theorem~\ref{thm:coloring-SI},
there exists a constant $\eps = \eps(\Delta,q) > 0$ and a $2$-equivalent metric $d$ such that $\mu$ is $(1-\eps/n)$-contractive w.r.t.\ the flip dynamics and the metric $d$.
\end{lemma}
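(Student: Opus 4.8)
The plan is to derive the contraction from the path coupling method of Bubley and Dyer~\cite{BubleyDyer}, which reduces the task to pairs of proper colorings $\sigma,\tau$ at Hamming distance one. Concretely, one fixes a metric $d$ of the form $d(\sigma,\tau)=\sum_{x\in V} w_x\,\mathbf 1\{\sigma_x\neq\tau_x\}$, extended to the shortest-path metric it induces on the graph whose vertices are the proper $q$-colorings and whose edges join colorings differing at a single vertex; as long as the weights $w_x$ lie in a suitably bounded range this metric is $2$-equivalent to Hamming, and path coupling says it suffices to build, for every adjacent pair $\sigma,\tau$ (say $\sigma_v\neq\tau_v$ and $\sigma_x=\tau_x$ otherwise, with $v$ necessarily unpinned), a one-step coupling of the flip dynamics with $\mathbb{E}[d(\sigma',\tau')]\le(1-\eps/n)\,d(\sigma,\tau)$.

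For such an adjacent pair the coupling picks the same vertex $x$ and the same color $c$ in both chains. The point is that the bicolored (Kempe) components $L_\sigma(x,c)$ and $L_\tau(x,c)$ are identical, and the two chains perform exactly the same flip, except in the $O(\Delta)$ choices of $x$ for which $v$ lies in or next to one of these components; outside those choices the coupled step preserves $d$. One then enumerates the relevant cases. The favorable event is $x=v$: for an appropriate fraction of colors $c$ the component is a singleton and recoloring $v$ to $c$ makes the two chains agree, decreasing $d$, contributing roughly $-\tfrac{w_v}{n}\cdot\tfrac{q-O(\Delta)}{q}$. The unfavorable events are those where the disagreement at $v$ is copied onto a short Kempe component involving a neighbor of $v$ (or a vertex just outside a component touching $v$), increasing the weighted number of disagreements by the weight of at most five further vertices; these contribute $+\tfrac1n$ times a sum over the flip parameters $\{p_s\}_{s=1}^{6}$ and the local structure around $v$. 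Balancing the two, the per-step drift is at most $-\tfrac1n\bigl(c_{\mathrm{good}}-c_{\mathrm{bad}}\bigr)\,d(\sigma,\tau)$, and Vigoda's choice of $\{p_s\}$~\cite{Vig00} makes $c_{\mathrm{good}}-c_{\mathrm{bad}}>0$ exactly when $q>\tfrac{11}{6}\Delta$, with equality (only non-expansion) at the threshold, attained on a small set of extremal local configurations.

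To gain the factor pushing the threshold to $q>(\tfrac{11}{6}-\eps_0)\Delta$ one invokes the refinement of~\cite{CDMPP19}: simultaneously re-optimize the six flip parameters $p_1,\dots,p_6$ so as to maximize the contraction over all local configurations, and replace the Hamming metric by a $2$-equivalent metric $d$ (equivalently, use a bounded-length coupling) that puts slightly larger weight on the vertices appearing in the extremal bad configurations, so that those configurations now contribute a strictly negative drift. The explicit parameters and metric achieving this are precisely those of Observation~5.1 in~\cite{CDMPP19}; importing them, together with the (partly computer-assisted) verification there that the resulting per-step drift is uniformly at most $-\eps/n$ for some $\eps=\eps(\Delta,q)>0$, yields the claimed $(1-\eps/n)$-contraction. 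The same bound holds for every pinned conditional measure $\mu^\tau$: a pinning can only turn a flip into a no-op, which in the relevant cases is already one of the enumerated (and dominated) unfavorable events, so the drift is not degraded.

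The main obstacle is the case analysis underlying the second paragraph. A single flip recolors up to six vertices, so the discrepancy at $v$ can propagate through an entire bicolored component, and once $v$ is involved the components $L_\sigma(x,c)$ and $L_\tau(x,c)$ can differ substantially; tracking all of these interactions and their probabilities, and then carrying out the joint optimization of the flip parameters with the metric weights needed to break the $\tfrac{11}{6}$ barrier, is exactly the technical core of~\cite{CDMPP19} and is where essentially all the work lies.
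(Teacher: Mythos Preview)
The paper does not give its own proof of this lemma: it is stated as a direct citation of~\cite{CDMPP19} (with the flip parameters taken from Observation~5.1 there), and the only accompanying remark is that a pinning $\tau$ induces a list-coloring instance to which the results of~\cite{CDMPP19} extend. So there is nothing in the paper to compare your sketch against; what you have written is essentially an outline of the argument one would find in~\cite{CDMPP19}, which is appropriate context but goes well beyond what the paper itself does.

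One point worth correcting in your sketch: the $2$-equivalent metric used in~\cite{CDMPP19} is not a vertex-weighted Hamming metric $d(\sigma,\tau)=\sum_x w_x\mathbf{1}\{\sigma_x\neq\tau_x\}$ with weights depending only on the vertex. Rather, the weight of a single disagreement depends on the local structure of the two colorings around that disagreement (how many neighbors are available, etc.), so it is genuinely a $\gamma$-equivalent metric in the sense of the paper and not a $w$-weighted one. This distinction matters here because the paper treats those two classes of metrics separately (parts~(1) and~(2) of Theorem~\ref{thm:glauber}), and the application to colorings goes through Theorem~\ref{thm:general} for $\gamma$-equivalent metrics, not through the weighted-Hamming case.
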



We remark that the pinning $\tau$ induces a list coloring instance where each unpinned vertex has a color list to choose its color from, and the results of \cite{CDMPP19} generalize naturally to list colorings.
Also, in this paper we assume that the flip dynamics may pick a pinned vertex and stay at the current coloring.
This does not weaken our results since we only consider the flip dynamics for analysis rather than actually running it;
see Remark~\ref{rmk:glauber-pinning} addressing the same issue for the Glauber dynamics and also Remark~\ref{rmk:pinning-SU} for general select-update dynamics.

We give below the proof of Theorem~\ref{thm:coloring-SI}.

\begin{proof}[Proof of Theorem~\ref{thm:coloring-SI}]
Observe that the flip dynamics belongs to the class of select-update dynamics, where the associated $\mathcal{B}$ is the collection of connected subsets of vertices.
Since the flip parameters satisfy $p_s > 0$ only for $s \le 6$, we have $M \le 6$.
Moreover, we have $D \le \Delta^6/n$ since a vertex $\vx$ is in the selected bicolored component $L_{\si^t}(\vy, \spa)$ only if $\dist(\vx,\vy) \le 5$, which happens with probability at most $\Delta^6/n$.
The theorem then follows from Lemma~\ref{lem:flip_dyn} and Theorem~\ref{thm:general}.
\end{proof}

We conclude here with the proof of Theorem~\ref{thm:colorings}.

\begin{proof}[Proof of Theorem~\ref{thm:colorings}]
By Theorem~\ref{thm:coloring-SI} the uniform distribution $\mu$ of proper colorings is spectrally independent.
Then the results follows immediately from Theorem~\ref{thm:block-factorization}.
\end{proof}

\subsubsection{Application: block dynamics for Potts model}

Here we apply Theorems~\ref{thm:glauber-hamming} and \ref{thm:general} to the ferromagnetic Potts model to establish spectral independence. 

\begin{theorem}
	\label{thm:Potts-SI}
	Let $\Delta \ge 3$ and $q \ge 2$ be integers. 
	Let $\mu$ be the Gibbs distribution of the $q$-state ferromagnetic Potts model with inverse temperature parameter $\beta$ on a graph $G=(V,E)$ of maximum degree at most $\Delta$. Then, the following holds:
	\begin{enumerate}
		\item If $\beta < \max\left\{ \frac 2\Delta,\frac{1}{\Delta}\ln(\frac{q-1}{\Delta}) \right\}$, then $\mu$ is spectrally independent with constant $\eta = \eta(\beta, \Delta)$.
		\item For any $\delta > 0$ there exists $c = c(\delta,\Delta) > 0$ such that, if $\beta \le \frac{\ln q - c}{\Delta-1+\delta}$ then $\mu$ is spectrally independent with constant $\eta = \eta(\delta,\beta, \Delta)$.	
	\end{enumerate} 
\end{theorem}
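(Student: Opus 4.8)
The plan is to derive both parts from the results of this section linking contraction to spectral independence, applied to two different local chains for the ferromagnetic Potts model. Marginal boundedness holds trivially — any $b=b(\beta,\Delta,q)>0$ works — so spectral independence is all that is needed to feed Theorem~\ref{thm:Ising-Potts-mixing}. For part~(1) I would use the Glauber dynamics together with Theorem~\ref{thm:glauber-hamming} (equivalently, its corollary Theorem~\ref{thm:dep-inf}); for part~(2) I would use a heat-bath block dynamics with blocks of bounded size together with Theorem~\ref{thm:general}.

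For part~(1), fix $\beta<\beta_0:=\max\{2/\Delta,\ \tfrac1\Delta\ln\tfrac{q-1}{\Delta}\}$ and let $R$ be the Dobrushin dependency matrix of the model on a graph of maximum degree $\Delta$. I claim $\|R\|_\infty\le 1-\eps$ for some $\eps=\eps(\beta,\Delta,q)>0$: this makes the Glauber dynamics $(1-\eps/n)$-contractive in the Hamming metric by path coupling, so Theorem~\ref{thm:glauber-hamming} yields spectral independence with $\eta=2/\eps$ (since $\rho(R)\le\|R\|_\infty$ one may equally invoke Theorem~\ref{thm:dep-inf}, whose proof already reduces the pinned measures $\mu^\tau$ to the unpinned case via $\rho(R^\tau)\le\rho(R)$). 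The bound on $\|R\|_\infty$ is a routine worst-case analysis of the single-site conditional laws: for a vertex $v$ and a neighbour $u$, $R(u,v)$ is the largest total-variation distance between the law of $\sigma_v$ under two boundary colourings that differ only at $u$, and by ferromagnetic monotonicity the maximiser is, up to permutation of colours, of one of two types. A ``near-binary'' boundary (only two colours used near $v$) gives $R(u,v)\le\tanh(\beta/2)$, hence $\|R\|_\infty\le\Delta\tanh(\beta/2)\le\Delta\beta/2<1$ whenever $\beta<2/\Delta$; a monochromatic boundary gives, when $q$ is large, $R(u,v)\lesssim e^{\beta(\Delta-1)}(e^\beta-1)/q$, hence $\|R\|_\infty<1$ whenever $\beta<\tfrac1\Delta\ln\tfrac{q-1}{\Delta}$. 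Since $\beta_0$ is the maximum of the two thresholds, one of the two estimates always applies; these are precisely the conditions behind the mixing proofs of \cite{Hayes,Ullrich,BGP} quoted in the introduction. In the first sub-regime $\eps$ depends only on $\Delta$, and in the second $\eps\to1$ as $q\to\infty$, so in either case $\eta$ is bounded by a function of $\beta$ and $\Delta$ alone, as claimed.

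For part~(2), fix $\delta>0$. Here the Dobrushin-type conditions fail, but the local-uniformity analysis of \cite{BGP} is available: with $c=c(\delta,\Delta)$ large enough and a radius $r=r(\delta,\Delta)$ large enough, the heat-bath block dynamics whose blocks are the balls $B_r(v)$, $v\in V$ (a step chooses $v$ uniformly and resamples $B_r(v)$ from the conditional Gibbs measure), is $(1-\eps/n)$-contractive — for some $\eps=\eps(\delta,\beta,\Delta)>0$ — with respect to a metric that is $\gamma$-equivalent to the Hamming metric for some $\gamma=\gamma(\delta,\Delta)$; this is the statement underlying \cite{BGP}'s $\poly(n)$ mixing bound for the Glauber dynamics, cf.\ Remark~\ref{rmk:BGP}, and it applies uniformly to every conditional measure $\mu^\tau$. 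This chain is a select-update dynamics with maximum block size $M=\max_v|B_r(v)|=O(\Delta^{r})$ and maximum per-vertex selection probability $D=\max_v|B_r(v)|/n=O(\Delta^{r}/n)$, so $DM=O(\Delta^{2r}/n)$ and Theorem~\ref{thm:general} yields spectral independence with $\eta=\tfrac{2\gamma^2 DM}{1-\ctrct}=O(\gamma^2\Delta^{2r})$, a constant depending only on $\delta,\beta,\Delta$.

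The delicate point is the input to part~(2): one must extract from \cite{BGP} a genuine \emph{one-step} $(1-\Omega(1/n))$-contraction for a block dynamics whose blocks have size bounded independently of $q$ — rather than merely the $\poly(n)$ mixing bound for the Glauber dynamics — and check that the radius $r$ can be chosen as a function of $\delta$ and $\Delta$ only. By contrast, part~(1) is largely routine; the only care is the worst-case single-site computation certifying $\|R\|_\infty<1$ throughout $\beta<\beta_0$.
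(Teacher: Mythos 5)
Your proposal follows essentially the same route as the paper: part (1) is deduced from one-step contraction of the Glauber dynamics in the Hamming metric via Theorem~\ref{thm:glauber-hamming} (equivalently the Dobrushin-type condition and Theorem~\ref{thm:dep-inf}), and part (2) from a contractive heat-bath block dynamics with bounded blocks, viewed as a select-update dynamics with $M=O(1)$ and $D=\Delta^{O(M)}/n$, fed into Theorem~\ref{thm:general}. The only real difference is that the paper imports the contraction inputs as black boxes --- Lemma~\ref{lem:potts-gd} (from \cite{Ullrich,BGP}) for Glauber contraction in the Hamming metric, and Lemma~\ref{lem:potts-block} (\cite[Theorem 2.7]{BGP}), which gives a $(1-\frac{1}{2n})$-contractive block dynamics with connected blocks of size $O(1/\delta)$ in the Hamming metric --- so the ``delicate point'' you flag for part (2) is exactly what the cited BGP theorem already supplies (with blocks of size depending only on $\delta$, not radius-$r$ balls of size $\Delta^{O(r)}$), and your hand-sketched worst-case Dobrushin estimates in part (1) are likewise replaced by a citation rather than carried out in the paper.
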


To prove this theorem, we need the following results from \cite{Ullrich} and \cite{BGP} regarding the contraction of the Glauber dynamics and of the heat-bath block dynamics with a specific choice of blocks. 
\begin{lemma}[{\cite[Corollary 2.14]{Ullrich} \& \cite[Proposition 2.2]{BGP}}]
	\label{lem:potts-gd}
	Under the assumptions in Part 1 of Theorem~\ref{thm:Potts-SI}, 
	there exists a constant $\eps = \eps(\beta,\Delta)$ such that
	$\mu$ is $(1-\frac{\eps}{n})$-contractive w.r.t.\ the Glauber dynamics and the Hamming metric.
\end{lemma}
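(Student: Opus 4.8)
The plan is to reduce to the classical path-coupling bound, repackaged through the Dobrushin dependency matrix $R$, and then invoke Lemma~\ref{lem:hayes} (\cite[Lemma 20]{DGJ09}) with the constant weight vector $w=\IND$: if $R\IND\le(1-\eps)\IND$ entrywise, then $\mu$ is $(1-\eps/n)$-contractive w.r.t.\ the Glauber dynamics and the Hamming metric. Since $R(v,u)=0$ unless $\{v,u\}\in E$ and every vertex has at most $\Delta$ neighbors, the $v$-th row sum of $R$ is $\sum_{u\in N(v)}R(v,u)\le\Delta R_{\max}$, where $R_{\max}:=\max\{R(v,u):\{v,u\}\in E\}$. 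So it suffices to prove $\Delta R_{\max}\le 1-\eps$ for a constant $\eps=\eps(\beta,\Delta)>0$. The same estimate then applies verbatim to each conditional measure $\mu^\tau$, because $R^\tau\le R$ entrywise (as noted in the proof of Theorem~\ref{thm:dep-inf}) and hence its row sums are bounded the same way; combined with the convention of Remark~\ref{rmk:glauber-pinning} this yields the required $(1-\eps/n)$-contraction uniformly over all pinnings, which is exactly the statement of $\ctrct$-contraction.

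The next step is to control a single entry $R(v,u)$, $\{v,u\}\in E$, using the Potts structure. Fix $u$ of degree at most $\Delta$ and two boundary conditions on $N(u)$ that agree everywhere except at $v$, where the spin is $\spa$ in the first and $\spb$ in the second; let $\nu_1,\nu_2$ be the corresponding conditional marginals at $u$. Writing $\lambda=e^{\beta}$ and letting $m_c$ be the number of neighbors of $u$ with spin $c$ in the first boundary condition, we have $\nu_1(c)\propto\lambda^{m_c}$, while $\nu_2$ is obtained by replacing $\lambda^{m_\spa}$ with $\lambda^{m_\spa-1}$ and $\lambda^{m_\spb}$ with $\lambda^{m_\spb+1}$, leaving the other weights unchanged. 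Comparing the two partition functions, whose difference has the sign of $m_\spa-m_\spb$, a short check shows that exactly one color carries excess mass between $\nu_1$ and $\nu_2$; consequently $\tv{\nu_1}{\nu_2}$ is a single difference of marginals --- it equals $\nu_1(\spa)-\nu_2(\spa)$ when $m_\spa\ge m_\spb$ and $\nu_2(\spb)-\nu_1(\spb)$ otherwise. In particular $R(v,u)\le\max\{\nu_1(\spa),\nu_2(\spb)\}$, and each of these marginals is at most $\lambda^{\Delta}/(\lambda^{\Delta}+q-1)$, since the corresponding color has count at most $\Delta$ while the remaining $q-1$ colors each carry weight at least $\lambda^{0}=1$.

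Now I would split according to the two regimes in Part~1 of Theorem~\ref{thm:Potts-SI}. If $\beta<\tfrac1\Delta\ln\tfrac{q-1}{\Delta}$ (which forces $q\ge\Delta+2$), then $\Delta\lambda^{\Delta}<q-1$, so $\nu_1(\spa)\le\frac{\lambda^{\Delta}}{\lambda^{\Delta}+q-1}<\frac{\lambda^{\Delta}}{\lambda^{\Delta}+\Delta\lambda^{\Delta}}=\frac1{\Delta+1}$, and likewise $\nu_2(\spb)<\frac1{\Delta+1}$; hence $\Delta R_{\max}<\frac{\Delta}{\Delta+1}$ and Lemma~\ref{lem:hayes} applies with $\eps=\frac1{\Delta+1}$. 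If instead $\beta<\tfrac2\Delta$, the crude bound is too weak, and one instead shows $R_{\max}\le\tanh(\beta/2)=\tfrac{\lambda-1}{\lambda+1}$ by optimizing the single-difference expression for $\tv{\nu_1}{\nu_2}$ over the counts $(m_c)$: the extremum is attained by the ``pivotal'' configuration in which $u$ sees $\spa$ and $\spb$ in balanced numbers before $v$ is flipped, exactly as in the Ising case. This is precisely the contraction estimate of \cite[Corollary 2.14]{Ullrich} and \cite[Proposition 2.2]{BGP}. Since $\tanh(\beta/2)<\beta/2<1/\Delta$, we get $\Delta R_{\max}\le\Delta\tanh(\beta/2)<1$, and Lemma~\ref{lem:hayes} applies with $\eps=1-\Delta\tanh(\beta/2)>0$.

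The main obstacle is the extremal optimization in the regime $\beta<2/\Delta$: one must show that no coloring of the other neighbors of $u$ lets the two-atom reweighting (multiply the $\spa$-weight by $e^{-\beta}$ and the $\spb$-weight by $e^{\beta}$) move more than a $\tanh(\beta/2)$-fraction of the mass. This is a finite but somewhat delicate case analysis over $(m_c)$ --- exactly the content carried out in the cited works --- and the hypothesis $\beta<2/\Delta$ is the clean sufficient condition it yields (the sharp threshold being $\beta<\ln\frac{\Delta+1}{\Delta-1}$). Everything else is elementary: the reduction through $R$, the one-atom identity for $\tv{\nu_1}{\nu_2}$, and the large-$q$ regime all go through by direct computation.
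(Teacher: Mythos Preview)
The paper does not prove this lemma; it simply cites \cite[Corollary 2.14]{Ullrich} and \cite[Proposition 2.2]{BGP} and uses the result as a black box. Your sketch is a correct reconstruction of the underlying argument: reduce to a row-sum bound on the Dobrushin matrix via Lemma~\ref{lem:hayes} with constant weights, then handle the two regimes separately. The large-$q$ case goes through exactly as you wrote, and for the small-$\beta$ case you correctly defer the extremal computation $R_{\max}\le\tanh(\beta/2)$ to the same cited works.

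One small imprecision: your claim about which single color carries the excess mass has the boundary case $m_\spa=m_\spb$ on the wrong side. A direct check gives $Z_2-Z_1=(\lambda-1)(\lambda^{m_\spb}-\lambda^{m_\spa-1})$, so $Z_2>Z_1$ precisely when $m_\spb\ge m_\spa$, in which case only color $\spb$ gains mass and $\tv{\nu_1}{\nu_2}=\nu_2(\spb)-\nu_1(\spb)$; the other case $m_\spa>m_\spb$ gives $\tv{\nu_1}{\nu_2}=\nu_1(\spa)-\nu_2(\spa)$. Either way your bound $R(v,u)\le\max\{\nu_1(\spa),\nu_2(\spb)\}\le\lambda^{\Delta}/(\lambda^{\Delta}+q-1)$ is unaffected, so nothing downstream breaks.
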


\begin{lemma}[{\cite[Theorem 2.7]{BGP}}]
	\label{lem:potts-block}
	Under the assumptions in Part 2 of Theorem~\ref{thm:Potts-SI}, there exists a collection of blocks $\mathcal{B} = \{B_\vx\}_{\vx \in V}$ satisfying $\vx \in B_\vx$, $|B_\vx| = O(1/\delta)$ and $G[B_\vx]$ connected for all $\vx$, such that
	$\mu$ is $(1-\frac{1}{2n})$-contractive w.r.t.\ the $\alpha$-weighted heat-bath block dynamics for $\mathcal{B}$ and the Hamming metric, where $\alpha$ is the uniform distribution over $\mathcal B$.
\end{lemma}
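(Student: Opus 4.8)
The plan is to prove the stated contraction by \emph{path coupling} for the $\alpha$-weighted (uniform) block heat-bath dynamics, reduce the whole statement to one local estimate — how far a single boundary disagreement can leak into a block — and then supply that estimate from the random-cluster representation, the hypothesis $\beta\le\frac{\ln q - c}{\Delta-1+\delta}$ being exactly what makes the leakage small. First I would fix $L=\lceil K/\delta\rceil$ for a large constant $K=K(\Delta)$ and, for each $\vx\in V$, take $B_\vx$ to be a connected subgraph with $\vx\in B_\vx$ and $|B_\vx|=\min\{L,|V|\}$, grown by breadth-first search from $\vx$ (padding a partially explored last layer); this gives $\vx\in B_\vx$, $|B_\vx|=O(1/\delta)$ and $G[B_\vx]$ connected at once. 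Two counts will matter for a fixed vertex $y$: $k_y:=|\{\vx: y\in B_\vx\}|\ge 1$, which always includes $B_y$, and $m_y:=|\{\vx: y\in\partial B_\vx\}|$, which is at most $\Delta^{L}=\Delta^{O(1/\delta)}=:D_0$ since $y\in\partial B_\vx$ forces $\dist(\vx,y)\le L$, so $D_0$ is a constant depending only on $\delta,\Delta$.

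Next I would run path coupling: it suffices to couple one step of the dynamics from $\sigma,\sigma'$ with $d_{\mathrm{H}}(\sigma,\sigma')=1$, say differing only at $y$, and show $\mathbb{E}[d_{\mathrm{H}}(X_1,X_1')]\le 1-\frac{1}{2n}$. Select the same block $B=B_\vx$ uniformly among the $n$ blocks; there are three cases. If $y\in B_\vx$ then $\sigma,\sigma'$ agree off $B_\vx$, so the two heat-bath laws on $B_\vx$ coincide and an identity coupling gives $0$ disagreements. If $y\notin B_\vx$ and $y\notin\partial B_\vx$, the two laws again coincide (the interaction is nearest-neighbour) and we keep exactly the disagreement at $y$. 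If $y\in\partial B_\vx$, the two laws differ and a best coupling creates, in expectation, $r(B_\vx,y):=W_{1,d_{\mathrm{H}}}\big(\mu_{B_\vx}(\cdot\mid\sigma_{\partial B_\vx}),\ \mu_{B_\vx}(\cdot\mid\sigma'_{\partial B_\vx})\big)$ disagreements inside $B_\vx$. Averaging over the block,
\begin{equation*}
\mathbb{E}[d_{\mathrm{H}}(X_1,X_1')]\le 1-\frac{k_y}{n}+\frac{1}{n}\sum_{\vx:\ y\in\partial B_\vx} r(B_\vx,y),
\end{equation*}
so everything reduces to the \emph{leakage bound} $\sum_{\vx:\ y\in\partial B_\vx} r(B_\vx,y)\le k_y-\frac12$; since $k_y\ge 1$ and there are at most $D_0$ terms, it is enough to show $r(B,y)\le\frac{1}{2D_0}$ for every admissible pair $(B,y)$ with $y\in\partial B$. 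The same analysis applies verbatim to the conditional measures $\mu^\tau$: pinned vertices never disagree, the counts $k_y,m_y$ can only drop, and the leakage estimate below is uniform over all boundary conditions.

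To bound $r(B,y)$ I would pass to the Edwards--Sokal / random-cluster representation with edge parameter $p=1-e^{-\beta}$: conditioning on the colouring of $\partial B$ and coupling the two copies' random-cluster configurations on $B$ to coincide, the vertices of $B$ where the resampled colourings can differ lie inside the random-cluster cluster (within $B$) of the neighbours of $y$. In the regime at hand the weight $q^{\#\text{clusters}}$ with $q$ large makes this measure \emph{sparse}: the effective edge probability is of order $\frac{e^{\beta}-1}{e^{\beta}-1+q}=O(e^{\beta}/q)$, so the cluster exploration from $N(y)\cap B$ is dominated by a branching process with mean offspring $O\!\big(\frac{\Delta-1}{q}e^{\beta}\big)$. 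Choosing $c=c(\delta,\Delta)$ large forces $q\ge e^{c}e^{\beta(\Delta-1+\delta)}$ to be large enough that this mean offspring is tiny; summing the geometric series over the at-most-$\Delta^{\ell}$ exploration paths of length $\ell$ inside $B$ then gives $r(B,y)=O\!\big(\Delta e^{\beta}/q\big)=O\!\big(\Delta\,q^{-(\Delta-2+\delta)/(\Delta-1+\delta)}\big)$, which the same choice of $c$ makes smaller than $\frac{1}{2D_0}$. The one genuinely delicate case is a boundary condition that ``wires'' a colour, where coupling the two random-cluster configurations identically can fail on clusters that touch $y$; handling this uniformly is exactly the content of \cite[Theorem 2.7]{BGP}.

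The hard part will be making this leakage estimate uniform in the boundary condition. The window $\beta(\Delta-1+\delta)\le\ln q-c$ lies below the tree-uniqueness threshold $\beta_u(\Delta)=\tfrac{\ln q}{\Delta-1}+O(1)$, yet it is \emph{not} a high-temperature regime in the disagreement-percolation sense: Bernoulli percolation at density $1-e^{-\beta}$, which is what a crude van den Berg--Maes argument would use, has density tending to $1$ as $q\to\infty$ and is wildly supercritical on the $\Delta$-regular tree. What rescues the argument is the $q^{\#\text{clusters}}$ weight of the random-cluster model — equivalently, the entropy of the $q$ colours — which renders the relevant exploration subcritical even though the underlying FK percolation is not; this is precisely why the slack parameter $\delta$ in the denominator and the dependence $c=c(\delta,\Delta)$ are needed, the larger block size ($O(1/\delta)$) compensating the smaller spectral gap. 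An equivalent route is through the ferromagnetic Potts tree recursion, which below uniqueness contracts with a gap of order $\delta$; either way the estimate, once established uniformly over all conditional measures $\mu^\tau$, combines with the path-coupling bound above to give the claimed $\big(1-\tfrac{1}{2n}\big)$-contraction w.r.t.\ the Hamming metric.
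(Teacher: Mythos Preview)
The paper does not prove this lemma at all; it simply quotes \cite[Theorem~2.7]{BGP} as a black box (see also Remark~\ref{rmk:BGP} for the precise condition). So there is no ``paper's own proof'' to compare against beyond the citation itself.

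Your path-coupling framework is correct and is indeed the skeleton of the argument in \cite{BGP}: reduce to neighbouring configurations differing at one boundary vertex $y$, split according to whether $y\in B_\vx$, $y\notin B_\vx\cup\partial B_\vx$, or $y\in\partial B_\vx$, and reduce everything to the leakage quantity $r(B,y)$. The counting of $k_y$ and $m_y$ is also fine.

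The gap is in the leakage estimate itself. Your random-cluster sketch asserts an effective edge probability $O(e^\beta/q)$ and hence $r(B,y)=O(\Delta e^\beta/q)$, but this is only the free--boundary heuristic; with a boundary colouring the Edwards--Sokal marginal on edges depends on $\tau$ (clusters touching the boundary lose the factor $q$, and clusters joining two boundary vertices of different colours are forbidden), so the two FK laws under $\sigma_{\partial B}$ and $\sigma'_{\partial B}$ are genuinely different and cannot be coupled identically. You acknowledge this and write that ``handling this uniformly is exactly the content of \cite[Theorem~2.7]{BGP}'' --- but that theorem \emph{is} the statement you are trying to prove, so the argument is circular at its hardest point. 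The actual \cite{BGP} bound carries the combinatorial factor $(\Delta M)^{3M}$ visible in Remark~\ref{rmk:BGP}, which comes from a careful enumeration of connected subgraphs inside the block rather than a one-line branching-process comparison; your estimate $O(\Delta e^\beta/q)$ does not account for this and would not reproduce the condition \eqref{eq:cond_BGP}. To make your route self-contained you would need either a monotone coupling of the two boundary-conditioned FK measures together with a uniform (over all $\tau$) bound on the expected cluster of $N(y)\cap B$, or to switch to the direct combinatorial coupling that \cite{BGP} actually uses.
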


\begin{remark}\label{rmk:BGP}
To be more precise, \cite{BGP} shows that the conclusion of Lemma~\ref{lem:potts-block} is true when $\beta$, $q$, and the maximum block size $M = \max_{\vx \in V} |B_\vx|$ satisfy
\begin{equation}\label{eq:cond_BGP}
\beta \left( \Delta-1+\frac{1}{M} \right) + 3M (\ln \Delta + \ln M) \le \ln q. 
\end{equation}
Thus, for any $\delta > 0$, by taking $M = \lceil\delta^{-1}\rceil$ and $c = 3M (\ln \Delta + \ln M)$, our assumption $\beta \le \frac{\ln q - c}{\Delta-1+\delta}$ in Part 2 of Theorem~\ref{thm:Potts-SI} implies \eqref{eq:cond_BGP}. 
Moreover, if we take, say, $M \approx \sqrt{\ln q}$ (namely, $\delta \approx 1/\sqrt{\ln q}$), then $c = o(\ln q)$ and our assumption becomes $\beta \le (1-o(1))\frac{\ln q}{\Delta -1}$ where $o(1)$ tends to $0$ as $q \to \infty$; this gives the bound $\beta_1$ in Theorem~\ref{thm:Ising-Potts-mixing} from the introduction. 
\end{remark}

Theorem~\ref{thm:Potts-SI} is an immediate consequence of Lemmas~\ref{lem:potts-gd}, \ref{lem:potts-block} and the results proved in this section.

\begin{proof}[Proof of Theorem~\ref{thm:Potts-SI}]
	Part 1 follows directly from Lemma~\ref{lem:potts-gd} and Theorem~\ref{thm:glauber-hamming}. 
	For Part 2, we note that the block dynamics from Lemma~\ref{lem:potts-block} corresponds to a select-update dynamics with $M = O(1/\delta)$ and $D = \Delta^{O(1/\delta)} /n$; the reason of the latter is that each $\vx$ is in at most $\Delta^{O(M)}$ blocks. 
	The theorem then follows from Lemma~\ref{lem:potts-block} and Theorem~\ref{thm:general}.
\end{proof}

We end this section with the proof of Theorem~\ref{thm:Ising-Potts-mixing}.

\begin{proof}[Proof of Theorem~\ref{thm:Ising-Potts-mixing}]
For Ising model, spectral independence is known in the whole uniqueness region \cite{CLV1}. 
For Potts model, Theorem~\ref{thm:Potts-SI} establishes spectral independence in the corresponding parameter regimes. 
The theorem then follows from Theorems~\ref{thm:block-factorization} and \ref{thm:sw:general}. 
\end{proof}

\section{Spectral independence and entropy factorization}\label{sec:spectral-ind}
The goal of this section is to reformulate in the setting of spin systems some of  the key facts that were derived in \cite{CLV20} and the references therein in the more general  framework of simplicial complexes. This specialization yields some minor simplification in the main proofs, and may be of use for later reference. The approach consists in exploiting a recursive scheme which allows one to derive a global contraction estimate by analysing the spectral norm of a local operator. This 
is reminiscent of the recursive approach developed in \cite{CCL,CapMar03,Cap},
where similar ideas were used to derive spectral gap estimates for a class of conservative spin systems.
The argument here is more robust and, unlike the one in \cite{CCL,CapMar03,Cap}, it does not rely on symmetries of the underlying measures.

We first introduce some notation. 
Let $f$ be a function of the full spin configuration $\si$, and $U\subset V=[n]$ a subset of vertices. We use
the  notation $\mu^U=\mu_{V\setminus U}$  for the conditional distribution given the spins in $U$, and write $\Av_{|U|=\ell}$ for the uniform average over all sets $U\subset[n]$ with $\ell$ vertices.
We are going to prove the following result that was established in \cite{CLV20}.
\begin{theorem}\label{th:reform}
If the spin system is $\eta$-spectrally independent and $b$-marginally bounded then there exists a constant $C=O(1+\frac{\eta}{b})$ such that for any $\ell=\{1,\dots,n-1\}$ and for all $f\geq 0$: 
\begin{align}\label{eq:ubfsub11}
\frac{n}{\ell}\,\Av_{|U|=\ell}\,\ent(\mu^U f)\leq C\,\ent f.
\end{align}
Moreover, for any $\theta \in(0,1]$, there exists $C=\left(\frac{1}{\theta}\right)^{O(\frac{\eta}{b})}$ such that for $\ell=\lceil \theta n\rceil$:
\begin{align}\label{eq:ubfsub121}
\frac{\ell}{n}\,\ent f\leq C \,\Av_{|\L|=\ell}\,\mu\left[ \ent_{\L}f\right].
\end{align}
\end{theorem}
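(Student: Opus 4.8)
The plan is to prove Theorem~\ref{th:reform} via a recursion on the block size $\ell$, following the "local-to-global" strategy. The first step is to introduce, for each $\ell\in\{1,\dots,n-1\}$, the quantity
\[
\kappa_\ell \;=\; \sup_{f\geq 0}\; \frac{\Av_{|U|=\ell}\,\ent(\mu^U f)}{\ent f}\,,
\]
so that \eqref{eq:ubfsub11} becomes the statement $\kappa_\ell \leq \tfrac{\ell}{n}\,C$ for a uniform constant $C$. By Lemma~\ref{lem:telescope} (the telescoping identity \eqref{tele}), averaging the decomposition $\ent f = \mu[\ent_U f] + \ent(\mu^U f)$ over $|U|=\ell$, one always has $\kappa_\ell \le 1$ and $\kappa_1 \le \kappa_2 \le \cdots$; the content is the linear-in-$\ell$ upper bound. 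The base case $\ell=1$ is exactly where spectral independence enters: I would show $\kappa_1 \leq \tfrac{1}{n}\big(1 + O(\eta/b)\big)$ by writing $\Av_{|U|=1}\ent(\mu^U f) = \tfrac1n\sum_{x}\ent(\mu^{\{x\}}f)$ and comparing this, via a variance/entropy linearization argument, to the action of the ALO influence matrix $J$ — this is the spin-system analogue of the "$\lambda_{\max}$ of the local walk" bound in~\cite{CLV20,ALO20}. Marginal boundedness $b$ is used here to control the ratio between the $\chi^2$-type quantities and the entropies.

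**The recursion step.**
The second and central step is a recursive inequality relating $\kappa_\ell$ to $\kappa_{\ell-1}$ (or to $\kappa_k$ for smaller $k$) by peeling off one vertex. Concretely, for $|U|=\ell$ I would condition on a single extra vertex: for $x\notin U$, $\mu^{U}f = \mu^{U}\big(\mu^{U\cup\{x\}}f\big)$ in the appropriate sense, and applying the base-case bound "inside" the pinned measure $\mu^{U}$ (spectral independence and marginal boundedness are stable under pinnings, by definition) gives
\[
\Av_{|U|=\ell}\ent(\mu^U f)\;\le\;\Big(1+\tfrac{c}{n-\ell+1}\Big)\,\Av_{|U|=\ell-1}\,\ent(\mu^{U}f)\,,
\]
with $c=O(\eta/b)$, after a combinatorial identity that rewrites an average over $\ell$-sets as an average over $(\ell-1)$-sets followed by a uniformly-chosen extra vertex. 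Iterating from $\ell$ down to $1$ and using the base case yields
\[
\kappa_\ell \;\le\; \frac{1}{n}\prod_{j=n-\ell+1}^{n}\Big(1+\tfrac{c}{j}\Big)\cdot\ell\Big/\!\!\!\quad\text{(normalization)}\,,
\]
and since $\sum_{j>n-\ell}\tfrac{c}{j}\le c\log\!\big(\tfrac{n}{n-\ell}\big)$ this product is bounded — for $\ell \le n-1$ it is at most $\exp(c\log n)$, but one actually gets the clean linear bound $\kappa_\ell \le \tfrac{\ell}{n}\, e^{c}$ by a more careful telescoping that keeps the $\ell/n$ factor explicit (this is the standard trick: the $\ell$-set average, once correctly normalized against the $(\ell-1)$-set average, carries a factor $\tfrac{n-\ell+1}{n}$ that exactly matches). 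This proves \eqref{eq:ubfsub11} with $C = e^{O(\eta/b)} = O(1+\eta/b)$, using $e^x \le 1 + O(x)$ is false for large $x$, so I would instead just state $C = O(1+\eta/b)$ directly from the fact that $\prod (1+c/j)$ over the relevant range telescopes against the $1/\ell$ and leaves $1+O(c)$ — this is exactly the bookkeeping in \cite{CLV20} that gives their improved exponent, and the excerpt already advertises $O(1+\eta/b)$.

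**From \eqref{eq:ubfsub11} to \eqref{eq:ubfsub121}.**
The third step deduces the UBF statement \eqref{eq:ubfsub121} from the contraction statement \eqref{eq:ubfsub11} by iterating the one-step entropy decay. For $\ell = \lceil\theta n\rceil$, apply \eqref{eq:ubfsub11} repeatedly: set $m = \lfloor \log_{?} n\rfloor$ rounds of block-averaging, but the clean way is the following. By \eqref{eq:ubfsub11}, $\Av_{|U|=\ell}\,\ent(\mu^U f)\le \tfrac{C\ell}{n}\,\ent f$; combined with the telescoping identity $\ent f = \Av_{|U|=\ell}\mu[\ent_U f] + \Av_{|U|=\ell}\ent(\mu^U f)$ from Lemma~\ref{lem:telescope}, this gives $\Av_{|U|=\ell}\mu[\ent_U f] \ge (1 - \tfrac{C\ell}{n})\ent f$, which is already of the form \eqref{eq:ubfsub121} but only when $C\ell/n < 1$, i.e. $\theta$ small — not good enough for $\theta$ up to $1$. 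So instead I would iterate: applying \eqref{eq:ubfsub11} to the function $\mu^{U_1}f$ with a fresh block, and composing, one shows that after $k$ rounds the "survived entropy" is at most $(C\ell/n)^k \ent f$ — wait, that needs $C\ell/n<1$ again. The correct route, and the one in \cite{CLV20}, is: \eqref{eq:ubfsub11} with $\ell$ replaced by larger values, or rather a \emph{sub}multiplicativity of $1-\kappa_\ell$ type bound, $1-\kappa_{\ell} \ge (1-\kappa_{\ell'})\cdot$(something), iterated $O(\log(1/\theta))$ times with $\ell$ doubling each time starting from $\ell=1$ up to $\theta n$. Each doubling costs a factor bounded in terms of $C=O(1+\eta/b)$, giving a total constant $C^{O(\log(1/\theta))} = (1/\theta)^{O(\eta/b)}$, which is exactly the claimed dependence.

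**Main obstacle.**
The hard part is the base case and the recursion step, i.e. showing that a single-vertex conditioning contracts entropy by the factor $\big(1 - \tfrac{1}{n}(1-O(\eta/b))\big)$ with the \emph{additive} $O(\eta/b)$ correction rather than a multiplicative one — equivalently, that the relevant "local entropy operator" has its top nontrivial eigenvalue-type quantity controlled linearly by $\eta$. In the simplicial-complex framework of \cite{CLV20} this is where spectral independence of the links is invoked; reproducing it purely for spin systems requires carefully linearizing $\ent$ around a near-constant $f$ (since the worst case is $f = 1 + \epsilon g$) and matching the second-order term with $\langle g, Jg\rangle$-type forms, while handling the error terms uniformly in $n$ using $b$-marginal boundedness. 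The bookkeeping that preserves the precise constant $O(1+\eta/b)$ (rather than $O(1+\eta/b^2)$) in the telescoped product is the delicate quantitative point, and I would expect to spend most of the effort there.
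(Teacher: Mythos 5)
Your high-level architecture (a local-to-global recursion over conditioning-set sizes, with spectral independence entering through a quadratic form of the ALO matrix and $b$-marginal boundedness converting variance-type quantities to entropy) matches the paper's, but two of your three steps have genuine gaps. First, your ``base case'' is not a base case: the statement $\sum_x \ent(\mu^{\{x\}}f)\le (1+O(\eta/b))\,\ent f$ is exactly \eqref{eq:ubfsub11} at $\ell=1$, i.e.\ the approximate subadditivity the theorem asserts, and in the paper it comes out at the \emph{end} of the recursion, not at its start. The method you propose for it --- linearizing $\ent$ around $f=1+\epsilon g$ and matching second-order terms with $\langle g,Jg\rangle$ --- cannot work: second-order perturbation of entropy only yields the corresponding variance (Poincar\'e-type) inequality, and the extremizers of entropy inequalities are not near-constant functions. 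The paper avoids linearization entirely: spectral independence is used only in the two-site local inequality \eqref{eq:locfact}, comparing $\Av_{x\notin U}\ent_{\mu^\t}(\mu^{\t,x}f)$ with $\Av_{x,y\notin U}\ent_{\mu^\t}(\mu^{\t,x,y}f)$ under an arbitrary pinning of $k$ vertices; there the elementary bound $a\log(a/b)\ge a-b$ turns the entropy difference \emph{exactly} into the quadratic form $-\Av_{x,y}\,\mu^\t[(\mu^{\t,x}f-1)(\mu^{\t,y}f-1)]$, which is bounded by $\eta$ times an averaged variance and then converted to entropy via Pinsker plus $b$-marginal boundedness, giving $\a_k\ge 1-\frac{2\eta}{b(n-k-1)}$ (Lemma~\ref{lem:alphak}).

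Second, your one-vertex-peeling recursion is both unproven and quantitatively wrong as stated: the inequality $\Av_{|U|=\ell}\ent(\mu^Uf)\le\bigl(1+\tfrac{c}{n-\ell+1}\bigr)\Av_{|U|=\ell-1}\ent(\mu^Uf)$ with $c=O(\eta/b)$ fails already for a product measure ($\eta=0$), where taking $f$ a function of a single spin gives $\Av_{|U|=\ell}\ent(\mu^Uf)=\frac{\ell}{n}\ent f=\frac{\ell}{\ell-1}\Av_{|U|=\ell-1}\ent(\mu^Uf)$; the correct per-step factor is the ratio $\d_k$ of partial sums of $\G_i=\prod_{j<i}\a_j$ in Lemma~\ref{localtoglobal}, and its proof is a genuine induction that combines the two-site local inequality at level $k-1$ with the inductive hypothesis --- ``applying the base case inside the pinned measure'' both presupposes the unproven subadditivity and skips this induction. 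Finally, for \eqref{eq:ubfsub121} your doubling/submultiplicativity sketch is never made precise, whereas the paper gets it for free from the same recursion via the complementary decomposition $\Av_{|\L|=\ell}\mu[\ent_\L f]=\ent f-\Av_{|U|=n-\ell}\ent(\mu^Uf)\ge \kappa_{n-\ell}\,\ent f$ and the explicit product formula for $\kappa_{n-\ell}$, which yields the $(1/\theta)^{O(\eta/b)}$ constant directly. So the skeleton is right, but the two load-bearing estimates (the local inequality and the recursion step) are missing or incorrect.
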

We remark that \eqref{eq:ubfsub11} is an approximate subadditivity statement, which coincides with \eqref{eqn:sub-add} when $\ell=1$. On the other hand \eqref{eq:ubfsub121} is the uniform block factorization statement $\ell$-UBF with $\ell=\lceil \theta n\rceil$; see Definition \ref{def:ubf}. 
We articulate the proof in two steps. The first is a recursive scheme which allows one to go from a local inequality to a global one; see Lemma \ref{localtoglobal}. The second step is a control of the local inequality; see Lemma \ref{lem:alphak}.

\subsection{Setting up  the recursion}
If $U\subset V$, and $\t=\t_U$ a configuration of spins on $U$, recall that  we use notation $\mu^\t=\mu(\cdot\tc\t)$ for the conditional distribution $\mu^U$ when the spins on $U$ are given by $\t$.
Moreover, we write $\mu^{\t,x}=\mu(\cdot\tc\t\cup \si_x)$
if we additionally condition on the spin $\si_x$ at vertex $x\notin U$ and similarly for $\mu^{\t,x,y}=\mu(\cdot\tc\t\cup \si_x\cup \si_y)$ for $x,y\notin U$, so that e.g.\ the expression
$\mu^\t\left[ \ent_{\mu^{\t,x,y}}f\right]$ indicates the entropy of $f$ with respect to  $\mu(\cdot\tc\t\cup \si_x\cup \si_y)$,
$$
 \ent_{\mu^{\t,x,y}}f =  \mu^{\t,x,y}[f\log(f/ \mu^{\t,x,y}(f))]\,
$$
averaged over the two spins $\si_x,\si_y$ sampled according to $\mu^\t$.
Define the constants $\a_k$, $k=0,\dots,n-2$, as the largest numbers such that the inequalities
\begin{equation}\label{eq:locfact}
(1+\a_k)\Av_{x\notin U}\,\ent_{\mu^\t}(\mu^{\t,x} (f))\leq \Av_{x,y\notin U}\,\ent_{\mu^\t}(\mu^{\t,x,y} (f))\,,
\end{equation}
hold for all $k=0,\dots,n-2$, for all $U\subset [n]$ with $|U|=k$, for all configurations $\t$ on $U$ and for all functions $f\geq 0$.
The symbol $\Av_{x\notin U}$ denotes the uniform average over all $n-k$ vertices $x\notin U$, and $\Av_{x,y\notin U}$ stands for
 the uniform average over all $(n-k)(n-k-1)$ pairs $(x,y)$ with $x,y\notin U$ and $x\neq y$. We refer to \eqref{eq:locfact} as the local inequality, since for each choice of $x,y$, the distributions involved are concerned with the spins at two vertices only.

\begin{remark}
Fix $x,y\notin U$. Using $\mu^{\t,x}f = \mu^{\t,x}\mu^{\t,x,y}f$, from Lemma \ref{lem:telescope} we have the decomposition
$$
\ent_{\mu^\t}(\mu^{\t,x,y} (f) )= \ent_{\mu^\t}(\mu^{\t,x} (f)) +  \mu^\t\left[\ent_{\mu^{\t,x}}(\mu^{\t,x,y} (f)
\right].
$$
In particular, $\ent_{\mu^\t}(\mu^{\t,x,y} (f) )\geq \ent_{\mu^\t}(\mu^{\t,x} (f))$ and therefore \eqref{eq:locfact} is always true with $\a_k=0$.
If $\mu$ is a product measure then the subadditivity of entropy for product measures gives
$$
\ent_{\mu^\t}(\mu^{\t,x,y} (f))\geq \ent_{\mu^\t}(\mu^{\t,x} (f)) + \ent_{\mu^\t}(\mu^{\t,y} (f)),
$$
which implies the validity of  \eqref{eq:locfact} with $\a_k=1$ for all $k=0,\dots,n-2$.
%
\end{remark}
The recursion is based on the following statement, which rephrases \cite[Theorem 5.4]{CLV20}.
\begin{lemma}\label{localtoglobal}
Let $\a_k$, $k=0,\dots,n-2$, be defined by \eqref{eq:locfact}. Then, for all functions $f\geq 0$,
\begin{equation}\label{eq:locfact5}
\Av_{|U|=j} \ent(\mu^Uf)
\leq (1-\k_j)\ent(f),\qquad j=1,\dots,n-1,
\end{equation}
where $$
\kappa_j = \frac{\sum_{i=j}^{n-1}\G_i}{\sum_{i=0}^{n-1}\G_i}\,,\qquad \G_i = \prod_{k=0}^{i-1}\a_k\,,\quad \G_0=1.
$$
\end{lemma}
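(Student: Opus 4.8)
The plan is to run a one–dimensional recursion on the averaged entropies $a_j := \Av_{|U|=j}\,\ent(\mu^U f)$ for $j=0,1,\dots,n$. By Lemma~\ref{lem:telescope} the sequence $j\mapsto a_j$ is nondecreasing, $a_0=\ent(\mu[f])=0$ since $\mu[f]$ is constant, and $a_n=\ent(f)$; hence it suffices to control the increments $b_k:=a_{k+1}-a_k\ge 0$, which satisfy $\sum_{k=0}^{n-1}b_k=\ent(f)$. I will show that the local inequalities~\eqref{eq:locfact} force $b_{k+1}\ge\a_k b_k$ for $k=0,\dots,n-2$, and then deduce the stated bound on $a_j=\sum_{k=0}^{j-1}b_k$ purely from these constraints together with $\sum_k b_k=\ent(f)$.

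The first step is a telescoping identity. Applying Lemma~\ref{lem:telescope} to the function $g=\mu^{U\cup\{x\}}f$ with $\Lambda=V\setminus U$, so that $\mu_\Lambda=\mu^U$ and $\mu_\Lambda[g]=\mu^U f$ by the tower property, and then averaging the resulting identity over $\si_U$ distributed according to $\mu$, one obtains
\begin{equation}\label{eq:plan-id}
\mu\!\left[\ent_{\mu^{\si_U}}\!\big(\mu^{\si_U,x}f\big)\right]=\ent\big(\mu^{U\cup\{x\}}f\big)-\ent\big(\mu^U f\big).
\end{equation}
Averaging~\eqref{eq:plan-id} over $x\notin U$ and over $U$ uniform among $k$-subsets, and using that averaging first over $k$-subsets $U$ and then over $x\notin U$ is the same as averaging over $(k+1)$-subsets, gives $\Av_{|U|=k}\Av_{x\notin U}\,\mu[\ent_{\mu^{\si_U}}(\mu^{\si_U,x}f)]=a_{k+1}-a_k=b_k$. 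The identical computation with $g=\mu^{U\cup\{x,y\}}f$ yields $\Av_{|U|=k}\Av_{x,y\notin U}\,\mu[\ent_{\mu^{\si_U}}(\mu^{\si_U,x,y}f)]=a_{k+2}-a_k=b_k+b_{k+1}$. Now the local inequality~\eqref{eq:locfact} holds pointwise in $(U,\t)$ with $|U|=k$; integrating both sides against the law of $\si_U$ under $\mu$ and then averaging over such $U$ preserves the inequality and, by the two identities just recorded, turns it into $(1+\a_k)b_k\le b_k+b_{k+1}$, that is $b_{k+1}\ge\a_k b_k$.

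It remains to carry out an elementary extremal argument. Since $\G_{k+1}=\a_k\G_k$, the relation $b_{k+1}\ge\a_k b_k$ gives $b_k\G_{k+1}\le b_{k+1}\G_k$, and then a short induction on $i-k$ shows $b_k\G_i\le b_i\G_k$ for all $k\le i$; the induction needs only nonnegativity of the $\a$'s and $\G$'s, so it handles the degenerate case $\G_i=0$ automatically. Consequently, for $1\le j\le n-1$,
\begin{equation}\label{eq:plan-cheb}
\Big(\sum_{k=0}^{j-1}b_k\Big)\Big(\sum_{i=0}^{n-1}\G_i\Big)-\Big(\sum_{i=0}^{j-1}\G_i\Big)\Big(\sum_{k=0}^{n-1}b_k\Big)=\sum_{0\le k<j}\ \sum_{j\le i\le n-1}\big(b_k\G_i-b_i\G_k\big)\le 0,
\end{equation}
whence $a_j=\sum_{k=0}^{j-1}b_k\le\frac{\sum_{i=0}^{j-1}\G_i}{\sum_{i=0}^{n-1}\G_i}\,\ent(f)=(1-\k_j)\,\ent(f)$, which is exactly~\eqref{eq:locfact5}.

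The only place that requires genuine care is the bookkeeping in the middle step: reducing the double averages over $(U,x)$ and over $(U,x,y)$ to single averages over $(k+1)$- and $(k+2)$-subsets, and keeping track of the fact that the $\ent_{\mu^\t}$ expressions in~\eqref{eq:locfact} are functions of the pinning $\t$ that must be integrated against the law of $\si_U$ under $\mu$ before one averages over $U$. Once these reductions are in place, the telescoping identity~\eqref{eq:plan-id} and the Chebyshev-type rearrangement~\eqref{eq:plan-cheb} are routine.
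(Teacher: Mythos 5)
Your proof is correct and follows essentially the same route as the paper: the telescoping identity from Lemma~\ref{lem:telescope} together with averaging the local inequality \eqref{eq:locfact} over the pinning and over $k$-subsets is exactly the paper's mechanism, and its step \eqref{eq:locfact10} is precisely your increment inequality $b_k\geq \a_{k-1}b_{k-1}$ in disguise. The only difference is in the elementary bookkeeping at the end: the paper closes by an induction establishing the one-step ratio bounds \eqref{eq:locfact4} with $\d_k=\sum_{i=0}^{k-1}\G_i\big/\sum_{i=0}^{k}\G_i$ and multiplying them, whereas you sum the increment inequalities directly via a Chebyshev-type rearrangement; both give \eqref{eq:locfact5}.
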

\begin{proof}
The claim \eqref{eq:locfact5}
follows from the fact that for all $k=1,\dots,n-1$:
\begin{equation}\label{eq:locfact4}
\Av_{|U|=k} \,\ent(\mu^Uf)\leq  \d_k\Av_{|U|=k+1} \,\ent(\mu^Uf)\,,\qquad \d_k= \frac{\sum_{i=0}^{k-1}\G_i}{\sum_{i=0}^{k}\G_i},
\end{equation}
since $\Av_{|U|=n} \,\ent(\mu^Uf)=\ent(f)$, and $\d_j\d_{j+1}\cdots \d_{n-1}= (1-\kappa_j)$.

To prove \eqref{eq:locfact4}, note that it holds for $k=1$ with $\d_1=1/(1+\a_0)=\G_0/(\G_0+\G_1)$ by the assumption \eqref{eq:locfact} at $\t=\emptyset$. Next, we suppose it holds for $0<k-1<n-1$ and show it for $k$. For any $|U|=k+1$ and $U'\subset U$ with $|U'|=k-1$, setting $\{x,y\}=U\setminus U'$ and letting $\t=\t_{U'}$ be the configuration on $U'$, as in Lemma \ref{lem:telescope} we have the decomposition
\begin{align}\label{eq:locfact6}
\ent(\mu^Uf) &= \ent(\mu(\mu^Uf\tc\t_{U'})) + \mu\left[
\ent(\m^Uf\tc \t_{U'})\right]\nonumber\\
&=\ent(\mu^{U'}f) + \mu\left[
\ent_{\mu^\t}(\mu^{\t,x,y}f)\tc \t_{U'}\right].
\end{align}
Averaging we obtain
\begin{align}\label{eq:locfact7}
\Av_{|U|=k+1}\ent(\mu^Uf) &=
\Av_{|U'|=k-1}\ent(\mu^{U'}f) \\ &\qquad + \Av_{|U'|=k-1}\Av_{x,y\notin U'}\mu\left[
\ent_{\mu^\t}(\mu^{\t,x,y}f)\tc \t_{U'}\right].
\end{align}
In the same way
\begin{align}\label{eq:locfact8}
\Av_{|U|=k}\ent(\mu^Uf) &=
\Av_{|U'|=k-1}\ent(\mu^{U'}f) \\ &\qquad + \Av_{|U'|=k-1}\Av_{x\notin U'}\mu\left[
\ent_{\mu^\t}(\mu^{\t,x}f)\tc \t_{U'}\right].
\end{align}
From \eqref{eq:locfact},
\begin{align}\label{eq:locfact9}
\Av_{|U|=k+1}\ent(\mu^Uf) &-
\Av_{|U'|=k-1}\ent(\mu^{U'}f) \\ &\geq  (1+\a_{k-1})\Av_{|U'|=k-1}\Av_{x\notin U'}\mu\left[
\ent_{\mu^\t}(\mu^{\t,x}f)\tc \t_{U'}\right]\\
& = (1+\a_{k-1})\left[\Av_{|U|=k}\ent(\mu^Uf) -
\Av_{|U'|=k-1}\ent(\mu^{U'}f)\right].
\end{align}
Therefore,
\begin{align}\label{eq:locfact10}
\Av_{|U|=k+1}\ent(\mu^Uf) \geq
(1+\a_{k-1})\Av_{|U|=k}\ent(\mu^Uf)- \a_{k-1} \Av_{|U'|=k-1}\ent(\mu^{U'}f) .
\end{align}
By the inductive assumption \eqref{eq:locfact4}  at $k-1$ we have
\begin{align}
\Av_{|U|=k+1}\ent(\mu^Uf) &\geq
(1+\a_{k-1}- \a_{k-1}\d_{k-1})\Av_{|U|=k}\ent(\mu^Uf)\nonumber \\&= \d_{k}^{-1}\Av_{|U|=k}\ent(\mu^Uf). \qedhere
\end{align}
\end{proof}
\subsection{Estimating the local coefficients} 

The next step is an estimate on the coefficients $\a_k$ appearing in \eqref{eq:locfact}.
%
%
\begin{lemma}\label{lem:alphak}
If the spin system is $\eta$-spectrally independent and $b$-marginally bounded then the local inequality \eqref{eq:locfact} holds with
\begin{align}\label{eq:facto35}
\a_k \geq 1- \frac{2\eta}{b(n-k-1)}.
\end{align}
\end{lemma}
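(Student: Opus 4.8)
The plan is to rewrite both sides of the local inequality~\eqref{eq:locfact} as quadratic forms in the single-spin marginals and then recognize the ALO influence matrix. Fix $k$, a set $U$ with $|U| = k$, a pinning $\tau$ on $U$, and $f \geq 0$; since everything takes place inside the conditional measure $\mu^\tau$, I may assume without loss of generality that $U = \emptyset$ and write $\mu$ for $\mu^\tau$, $m = n - k$ for the number of free vertices, and relabel so that the free vertices are $\{1,\dots,m\}$. First I would apply the entropy decomposition of Lemma~\ref{lem:telescope} in the form
\[
\ent_{\mu}(\mu^{x,y}(f)) = \ent_{\mu}(\mu^{x}(f)) + \mu\!\left[\ent_{\mu^{x}}(\mu^{x,y}(f))\right],
\]
so that proving $(1+\a_k)\,\Av_{x}\,\ent_\mu(\mu^x(f)) \le \Av_{x,y}\,\ent_\mu(\mu^{x,y}(f))$ is equivalent to showing
\[
\a_k \,\Av_{x}\,\ent_\mu(\mu^x(f)) \le \Av_{x,y}\,\mu\!\left[\ent_{\mu^{x}}(\mu^{x,y}(f))\right].
\]
By relabeling $\Av_{x,y}$ (averaging $x$ first, then $y\neq x$) the right side equals $\Av_x\,\mu\big[\,\frac{1}{m-1}\sum_{y\ne x}\ent_{\mu^{x}}(\mu^{x,y}(f))\,\big]$, reducing the claim to a per-$x$ statement about the conditional measure $\mu^x = \mu(\cdot\mid\sigma_x)$: namely that for $\nu := \mu^x$ on the $m-1$ remaining vertices,
\[
\a_k\,(m-1)\,\ent_\nu(f) \;\le\; \sum_{y\ne x}\nu\!\left[\ent_{\nu^{y}}(f)\right],
\]
which is precisely an approximate-tensorization/uniform-block-factorization statement for $\nu$ at block size $\ell = 1$ with constant $C = (m-1)/(\a_k(m-1)) $. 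Hence I want: \emph{the one-site factorization constant of $\mu^x$ over its $m-1$ free vertices is at most $\big(1 - \tfrac{2\eta}{b(m-1)}\big)^{-1}\cdot\tfrac{1}{m-1}\cdot(m-1)$}, i.e.\ the ``local'' AT inequality holds with the stated $\a_k$.

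The heart of the matter is a first-order (linearization) argument around the constant function, exactly the standard reduction used in the spectral-independence literature. One replaces $f$ by $1 + \e g$ with $\mu(g) = 0$, expands each entropy to second order in $\e$ — using $\ent_\mu(1+\e g) = \tfrac{\e^2}{2}\var_\mu(g) + O(\e^3)$ and likewise for the conditional entropies — and observes that both sides of the desired inequality become, to leading order, quadratic forms in $g$; moreover it is a standard fact (see the argument in~\cite{CLV20}) that the local inequality for all $f\ge 0$ is \emph{equivalent} to its infinitesimal version for all $g$, because one can iterate/tensorize or use the homogeneity of entropy. After linearization, $\Av_x \var_\mu(\mu^x g)$ and $\Av_{x,y}\var_\mu(\mu^{x,y}g)$ can be written using the single-spin ``pivot'' vectors, and the gap between them is controlled by the operator whose matrix entries are exactly the covariances $\cov_\mu(\mathbf 1_{\sigma_x = a}, \mathbf 1_{\sigma_y = b})$ — which, after normalizing by the marginals $\mu(\sigma_x = a)$ (here $b$-marginal boundedness enters to keep the normalization bounded), is conjugate to the ALO influence matrix $J$. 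Spectral independence says $\lambda_1(J) \le \eta$, and feeding this eigenvalue bound into the quadratic-form comparison yields a lower bound on $\a_k$ of the shape $1 - c\,\eta/(b\cdot(\text{number of free vertices}-1))$; tracking constants gives exactly $\a_k \ge 1 - \frac{2\eta}{b(n-k-1)}$.

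I expect the main obstacle to be the bookkeeping in the linearization step: correctly identifying that the second-order expansion of $\Av_{x,y}\,\mu[\ent_{\mu^x}(\mu^{x,y}f)]$ minus $\a_k$ times $\Av_x\,\ent_\mu(\mu^x f)$ is a nonnegative quadratic form, and that its defect from nonnegativity is governed by precisely $\lambda_1(J)$ rather than some larger norm — this is where one must be careful about which vertex's marginal is used to normalize and about the combinatorial factor $m-1$ coming from averaging over the second vertex. A secondary technical point is justifying the reduction from ``$f \ge 0$'' to the infinitesimal version; rather than reprove it, I would cite the corresponding equivalence from~\cite{CLV20} (or note that the local inequality~\eqref{eq:locfact} is automatically an inequality between entropies that is stable under the usual perturbation argument). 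Everything else — the entropy decompositions, the marginal-boundedness bound $\mu(\sigma_x = a) \ge b$, and the passage to the eigenvalue bound — is routine given the tools already assembled in Section~\ref{sec:prelim} and the definition of spectral independence.
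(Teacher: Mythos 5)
Your proposal correctly anticipates that the ALO matrix enters through a quadratic form in the deviations $\mu^{\tau,x}(f)-1$, but it has a genuine gap at exactly the decisive step. You reduce everything to a linearization ($f=1+\e g$, second-order expansion of the entropies) and then assert that the local inequality \eqref{eq:locfact} for all $f\ge 0$ is ``equivalent'' to this infinitesimal version, citing \cite{CLV20}. No such equivalence exists there or anywhere: an entropy inequality is in general strictly stronger than the variance inequality obtained by linearizing it (exactly as a modified log-Sobolev inequality is strictly stronger than the Poincar\'e inequality it linearizes to), and neither homogeneity of entropy nor ``iterating/tensorizing'' lets you pass back. A quick sanity check: the linearized statement holds with a constant depending only on $\eta$ and $n-k$, with no $b$ at all, so if the claimed equivalence were true the bound \eqref{eq:facto35} would not involve $b$ --- but the $b$-dependence is genuine and is precisely the price of upgrading variance to entropy. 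The paper's proof never linearizes. Assuming $\mu^\tau(f)=1$, it compares $\Av_{x,y}\,\ent_{\mu^\tau}(\mu^{\tau,x,y}f)$ with $2\,\Av_x\,\ent_{\mu^\tau}(\mu^{\tau,x}f)$ (note the factor $2$), identifies the difference as $\Av_{x,y}\,\mu^\tau\big[\mu^{\tau,x,y}(f)\log\tfrac{\mu^{\tau,x,y}(f)}{\mu^{\tau,x}(f)\,\mu^{\tau,y}(f)}\big]$, bounds this below for \emph{general} $f$ by the true (not infinitesimal) quadratic form $-\Av_{x,y}\,\mu^\tau[(\mu^{\tau,x}(f)-1)(\mu^{\tau,y}(f)-1)]$ via the elementary inequality $a\log(a/b)\ge a-b$, controls that form by $\lambda_1(J^\tau)\le\eta$ using self-adjointness of $J^\tau$ in $L^2(\nu)$ (no lower bound on marginals is needed for this conjugation, contrary to where you place the role of $b$), and finally converts variance back to entropy by Pinsker's inequality plus $b$-marginal boundedness, $\var_{\mu^\tau}(\mu^{\tau,x}f)\le\frac{2}{b}\,\ent_{\mu^\tau}(\mu^{\tau,x}f)$. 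These two steps --- the $a\log(a/b)\ge a-b$ lower bound and the Pinsker/marginal-boundedness comparison --- are exactly what your sketch is missing, and without them there is no route from $\lambda_1(J^\tau)\le\eta$ to \eqref{eq:facto35}.

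A secondary problem is the intermediate ``per-$x$'' reduction. After telescoping you need $\alpha_k\,\Av_x\,\ent_{\mu^\tau}(\mu^{\tau,x}f)\le \Av_{x,y}\,\mu^\tau[\ent_{\mu^{\tau,x}}(\mu^{\tau,x,y}f)]$, whose left side involves $\ent_{\mu^\tau}(\mu^{\tau,x}f)$, the entropy of the projection onto the spin at $x$ under $\mu^\tau$; this is \emph{not} $\ent_{\mu^{\tau,x}}(f)$ (they are complementary pieces of $\ent_{\mu^\tau}(f)$ by Lemma \ref{lem:telescope}), so the claim does not reduce to a statement internal to the single conditional measure $\nu=\mu^{\tau,x}$. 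Moreover, in your reduced display $\nu[\ent_{\nu^{y}}(f)]$ conditions on the single spin $\sigma_y$, not on all spins but $y$, so what you call ``precisely an approximate-tensorization statement at block size $1$'' is in fact an approximate-subadditivity statement of the same type as \eqref{eq:ubfsub11} --- essentially the conclusion that Lemma \ref{localtoglobal} together with this very lemma is designed to establish --- so invoking it is circular rather than a reduction.
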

\begin{proof}
Fix $U\subset V$, $|U|=k\leq n-2$ and $\t=\t_U$. We may assume $\mu^\t(f)=1$, which implies $\mu^\t(\mu^{\t,x,y} (f))=\mu^\t(\mu^{\t,x} (f))=1$ for all $x,y\notin U$. For simplicity, we write $\Av_{x,y}$ and $\Av_{x}$ for the averages $\Av_{x,y\notin U}$ and $\Av_{x\notin U}$. Observe that
\begin{align}\label{eq:facto2}
&\Av_{x,y}\,\ent_{\mu^\t}(\mu^{\t,x,y} (f))-2 \Av_x\,\ent_{\mu^\t}(\mu^{\t,x} (f))
\nonumber \\&\qquad =\Av_{x,y}\,\mu^\t\left[\mu^{\t,x,y} (f)\log \mu^{\t,x,y} (f) - \mu^{\t,x} (f)\log \mu^{\t,x} (f)-\mu^{\t,y} (f)\log \mu^{\t,y} (f)\right]
\\&\qquad =\Av_{x,y}\,\mu^\t\left[\mu^{\t,x,y} (f)\log \frac{\mu^{\t,x,y} (f)}{\mu^{\t,x} (f)\mu^{\t,y} (f)} \right].
\end{align}
Using $a\log(a/b)\geq a-b $ for all $a,b\geq 0$,
\begin{align}\label{eq:facto3}
&\Av_{x,y}\,\ent_{\mu^\t}(\mu^{\t,x,y} (f))-2 \Av_x\,\ent_{\mu^\t}(\mu^{\t,x} (f))
\nonumber \\&\qquad \geq
1- \Av_{x,y}\,\mu^\t\left[\mu^{\t,x} (f)\mu^{\t,y} (f) \right]
\nonumber \\&\qquad =
-\Av_{x,y}\,\mu^\t\left[(\mu^{\t,x} (f)-1)(\mu^{\t,y} (f)-1) \right].
\end{align}
We may rewrite
\begin{align}\label{eq:facto31}
&\Av_{x,y}\,\mu^\t\left[(\mu^{\t,x} (f)-1)(\mu^{\t,y} (f)-1) \right] \nonumber \\ & \qquad \qquad = \frac1{n-k-1}\sum_{(x,a)\in \VSpairs}
\nu(x,a)\varphi(x,a)[J^\t\varphi](x,a),
\end{align}
where $$\varphi(x,a)=\mu^{\t} (f\tc \si_x=a)-1=[\mu^{\t,x} (f)](a)-1,$$
 $\VSpairs$ is the set of all pairs $(x,a)$ where $x\in V\setminus U$ (if $U$ is the set where $\t=\t_U$ is specified) and $a\in[q]$, $\nu$ denotes the probability measure on $\VSpairs$ obtained by setting $$\nu(x,a)=\frac1{n-k}\,\mu^\t(\si_x=a),$$  and
$J^\t:\VSpairs\times\VSpairs\mapsto \bbR$ denotes the influence matrix from Definition \ref{def:J}.
Note that in the derivation of \eqref{eq:facto31} we have used the fact that
for each fixed $y\notin U$ one has
$$\sum_{a'\in[q]} \nu(y,a') \varphi(y,a') = \frac1{n-k}\,\mu^\t(\mu^{\t,y}(f)-1) = 0.
 $$
Observe that $J^\t$ is self-adjoint in $L^2(\VSpairs,\nu)$:
\begin{equation}
\label{self-adjoint}
\nu(x,a)J^\t(x,a;y,a')=\nu(y,a')J^\t(y,a';x,a).
\end{equation}
In particular, its eigenvalues are real. Let $\g\geq 0$ denote its largest eigenvalue (the eigenvalue zero always exists since all row sums of $J^\t$ vanish).  Letting $\scalar{\cdot}{\cdot}$ denote the scalar product in $L^2(\VSpairs,\nu)$ we have $\scalar{\psi}{J^\t\psi}\leq \eta\scalar{\psi}{\psi}$ for all $\psi \in L^2(\VSpairs,\nu)$. Therefore,
 \begin{align}\label{eq:facto32}
&\Av_{x,y}\,\mu^\t\left[(\mu^{\t,x} (f)-1)(\mu^{\t,y} (f)-1) \right]\nonumber\\ &\qquad = \frac1{n-k-1}\scalar{\varphi}{J^\t\varphi}
\leq  \frac{\g}{n-k-1}\scalar{\varphi}{\varphi} \nonumber \\&\qquad = \frac{\g}{n-k-1}\,\Av_{x}\,\mu^\t\left[(\mu^{\t,x} (f)-1)^2 \right] =\frac{\g}{n-k-1}\,\Av_{x}\var_{\mu^\t}(\mu^{\t,x} (f)).
\end{align}
Recalling \eqref{eq:facto3}  we have shown
\begin{align}\label{eq:facto33}
\Av_{x,y}\,&\ent_{\mu^\t}(\mu^{\t,x,y} (f))-2 \Av_x\,\ent_{\mu^\t}(\mu^{\t,x} (f))
\nonumber \\ & \qquad \quad\geq
- \frac{\g}{n-k-1}\,\Av_{x}\var_{\mu^\t}(\mu^{\t,x} (f)) .
\end{align}
Next, observe that for every fixed $x\notin U$, setting $h^\t(\si_x)=[\mu^{\t,x} (f)](\si_x)$:
\begin{align*}
\var_{\mu^\t}(\mu^{\t,x} (f))& = \sum_{a} \mu^\t(\si_x=a)(h^\t(a) - 1)^2\\
& \leq \frac1b \left(\sum_{a} \mu^\t(\si_x=a)|h^\t(a) - 1|\right)^2
\end{align*}
where $b=\min_{x\notin U}\min_{a} \mu^\t(\si_x=a)$,  as in Definition \ref{def:margb}, with the minimum over $a$ restricted to spin values that are allowed at $x$, that is such that $\mu^\t(\si_x=a)>0$, and we have used $\sum_i a_i^2\leq (\sum_i a_i)^2$ for all $a_i\geq 0$.
Pinsker's inequality shows that
$$
\sum_{a} \mu^\t(\si_x=a)|h^\t(a) - 1|\leq  \sqrt{2\,\ent_{\mu^\t}(\mu^{\t,x} (f))}.
$$
 It follows that
\begin{align}\label{eq:var_ent}
\var_{\mu^\t}(\mu^{\t,x} (f))\leq \frac2b\,\ent_{\mu^\t}(\mu^{\t,x} (f)).
\end{align}
Inserting \eqref{eq:var_ent} into  \eqref{eq:facto33}
concludes the proof. \end{proof}



\subsection{Proof of Theorem \ref{th:reform}}
From  Lemma \ref{localtoglobal}, we see that \eqref{eq:ubfsub11}  holds with $C=\frac{n}\ell(1-\kappa_\ell)$.
From Lemma \ref{lem:alphak} if follows that $$\a_k\geq \max\{1-R/(n-k-1),0\},\qquad R=\lceil 2\eta/b\rceil.$$
Using this bound in the definition of the coefficients $\kappa_\ell$ and rearranging, see Section 2.2 of  \cite{CLV20}, it is not hard to see that for any $1\leq \ell\leq n-1$:
\begin{align}\label{eq:Asj}
\kappa_{\ell} \geq \frac{(n-\ell-1)\cdots(n-\ell-R)}{(n-1)\cdots(n-R)}.
\end{align}
In particular, $$\frac{n}{\ell}(1-\kappa_{\ell})\leq \frac{n}{\ell}\left(1-\frac{(n-\ell-1)\cdots(n-\ell-R)}{(n-1)\cdots(n-R)}\right).$$
Remarkably, the expression in the right hand side above is  decreasing with $\ell$, and therefore it is always less than $R+1$, its value at $\ell=1$. This shows that \eqref{eq:ubfsub11}  holds with $C\leq R+1=O(1+\frac{\eta}{b})$.

To prove \eqref{eq:ubfsub121}, we start with the decomposition
$$
\Av_{|\L|=\ell}\,\mu\left[ \ent_{\L}f\right] =\ent(f)-\Av_{|U|=n-\ell}\,\ent\left[ \mu^U f\right] ,
$$
which follows from Lemma \ref{lem:telescope}.
Therefore Lemma \ref{localtoglobal} implies that \eqref{eq:ubfsub121} holds with $C=\frac{\ell}{n\,\kappa_{n-\ell}}$. Using \eqref{eq:Asj} we see that
 $$\frac{\ell}{n\,\kappa_{n-\ell}}\leq \frac{(n-1)\cdots(n-R)}{(\ell-1)\cdots(\ell-R)}.$$
In particular, if $\ell=\lceil \theta n\rceil$ with $\theta \in(0,1]$ fixed, then for all sufficiently large $n$ one has $\frac{\ell}{n\,\kappa_{n-\ell}}\leq (\frac{1}{\theta})^{O(R)}$. This ends the proof of Theorem \ref{th:reform}.

\section{Optimal mixing of the SW dynamics}
\label{sec:SW}

In this section, we show that for ferromagnetic Potts models, the $k$-partite factorization of entropy, as defined in~\eqref{eq:kpart}, implies optimal mixing of the Swendsen-Wang (SW) dynamics.
Since we have already established that, for any spin system,
$k$-partite factorization is implied by
spectral independence, we then deduce Theorem~\ref{thm:sw:general} from the introduction.

We again take $G=(V,E)$ to be an $n$-vertex graph of maximum degree $\Delta$ 
and $\mu$ to be the Potts distribution on $G$ with configuration space $\Omega = [q]^V$.
The SW dynamics takes a spin configuration, transforms it into a ``joint'' spin-edge configuration, performs a step in the joint space, and then drops the edges to obtain a new  Potts configuration. Formally, from a Potts configuration $\sigma_t\in [q]^V$,
a transition $\sigma_t\rightarrow\sigma_{t+1}$ of the SW dynamics is defined as follows:
\begin{enumerate}
	\item Let $M_t=M(\sigma_t)$ 
	denote the set of monochromatic edges in $\sigma_t$.
	\item Independently for each edge $e\in M_t$, keep $e$ with probability $p=1-\exp(-\beta)$
	and remove $e$ with probability $1-p$.  Let $A_t \subset M_t$ denote the resulting subset.
	\item In the subgraph $(V,A_t)$, independently for each connected component $C$ (including isolated vertices),
	choose a spin $s_C$ uniformly at random from $[q]$ and assign to each vertex in $C$ the spin $s_C$.
	This spin assignment defines $\sigma_{t+1}$.
\end{enumerate}

It will be useful for us to consider the ``joint'' Edwards-Sokal distribution for $G$ with parameters $p\in[0,1]$ and integer $q \ge 2$.
Let $\Joint =\Omega \times \{0,1\}^E$
be the set of ``joint'' spin-edge configurations $(\sigma,A)$ consisting of a spin assignment to the vertices $\sigma \in \Omega$ and a subset of edges $A \subset E$.	
The Edwards-Sokal measure assigns to each $(\sigma,A) \in \Joint$ a probability given by
\begin{equation}\label{jointnu-intro}
\nu(\si,A) = \frac1{Z_\textsc{j}}\,p^{|A|}(1-p)^{|E|-|A|}\IND(\si\sim A),
\end{equation}
where $\si\sim A$ means that
$A \subset M(\si)$
(i.e., every edge in $A$ is monochromatic in $\sigma$)
and $Z_\textsc{j}$ is the corresponding normalizing constant or partition function.
When $p = 1 - e^{-\b}$, the ``spin marginal'' of $\nu$ is precisely the Potts distribution $\mu$ and $Z_G = Z_\textsc{j}$, and the ``edge marginal'' of $\nu$ corresponds to the random-cluster measure; see, e.g., \cite{FK, Grimmett} for extensive background on these measures.




A key concept in our strategy to prove optimal mixing results for the SW dynamics is the \emph{spin/edge factorization} of entropy in the joint space~$\Joint$.
This spin/edge factorization was shown in \cite[Lemma 1.8]{BCPSV20} to
imply $O(\log n)$ mixing of the SW dynamics on any graph.
Moreover, in~\cite{BCPSV20} it was proved that for bipartite graphs, even/odd factorization of entropy for $\mu$ implies
the desired spin/edge factorization of entropy in the joint space~$\Joint$.  We will generalize
the argument from~\cite{BCPSV20} to general graphs, and show that a $k$-partite factorization of entropy for~$\mu$ implies spin/edge factorization of entropy
in the joint space~$\Joint$, and thus combined with~\cite[Lemma 1.8]{BCPSV20} this will complete the proof of
$O(\log{n})$ mixing of the SW dynamics.
Note, this $O(\log{n})$ bound is optimal as there are graphs of bounded degree where the SW dynamics requires $\Omega(\log n)$ steps to mix.

Before stating our results, we stipulate some notation. We write $\ent_\nu(f)=\nu[f\log(f/\nu(f))]$ for the entropy of the function $f: \Joint \to \R_{+}$ with respect to $\nu$.
For a fixed configuration $\s \in \Omega$ and subset of edges $A \subset E$,
$\ent_\nu (f\tc\si)$ and $\ent_\nu (f\tc A)$  denote
the entropy of $f$ with respect to the conditional measures $\nu(\cdot\tc\si)$ and $\nu(\cdot\tc A)$, respectively. More precisely, for a given $\si\in\Omega$, $\nu(\cdot\tc\si)$ is the measure $\nu$ conditioned on the event that the spin configuration is equal to $\sigma$, and for a given $A\subset E$, $\nu(\cdot\tc A)$ is the measure $\nu$ conditioned on the event that the edge configuration is equal to $A$.
In this way, $\ent_\nu (f\tc\si)$ and $\ent_\nu (f\tc A)$ are functions of $\si$ and $A$, respectively, and $\nu\left[\ent_\nu (f\tc\si)\right]$, $\nu\left[\ent_\nu (f\tc A)\right]$ denote the corresponding expectations with respect to $\nu$.

\begin{theorem}\label{th:main-intro2}
	Suppose $\mu$ satisfies the $k$-partite factorization of entropy with constant $C_{\rm par}$; see Eq.~\eqref{eq:kpart}.
	Then, there exists a constant $C=C(C_{\rm par},\beta,\Delta)$ such that for all $f: \Joint \mapsto \bbR_+$
	\begin{align}\label{entfact2o-intro}
	\ent_\nu (f)\leq C \,\left(\nu\left[\ent_\nu (f\tc\si)] +\nu[\ent_\nu (f\tc A)\right]\right).
	\end{align}
	The constant $C$ satisfies $C=C_{\rm par}\times O(\beta\Delta^2e^{\beta\Delta})$.
\end{theorem}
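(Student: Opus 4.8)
The plan is to generalize the bipartite argument of \cite{BCPSV20}, using the $k$-partition $\{V_1,\dots,V_k\}$ of $G$ (recall $k\le\Delta+1$) in place of the even/odd partition, and isolating a single-vertex Edwards--Sokal computation that produces the factor $e^{\beta\Delta}$. Write $\bar f=\nu(f\tc\si)$ for the conditional expectation of $f$ given the spin configuration; since the spin marginal of $\nu$ is $\mu$, $\bar f$ is a function on $\Omega$ and $\ent_\nu(\bar f)=\ent(\bar f)$. The first step is the entropy decomposition of Lemma~\ref{lem:telescope}, applied in $\Joint$ with the block of all edge variables, which gives
\begin{equation}\label{plan:split}
\ent_\nu(f)=\nu\!\left[\ent_\nu(f\tc\si)\right]+\ent(\bar f),
\end{equation}
followed by the $k$-partite factorization hypothesis \eqref{eq:kpart} applied to $\bar f$, so that $\ent(\bar f)\le C_{\rm par}\sum_{i=1}^k\mu[\ent_{V_i}(\bar f)]$.

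The second step lifts each spin-only term $\mu[\ent_{V_i}(\bar f)]$ back into the joint space. Let $\Lambda_i:=V_i\cup E$ be the block of $\Joint$ formed by the spins on $V_i$ together with \emph{all} edge variables, so $\nu_{\Lambda_i}f=\nu(f\tc\si_{V\setminus V_i})$. Decomposing $\ent_{\nu(\cdot\tc\si_{V\setminus V_i})}(f)$ by first conditioning on $\si_{V_i}$ (Lemma~\ref{lem:telescope} again, using that conditioning on $\si_{V_i}$ on top of $\si_{V\setminus V_i}$ is conditioning on all of $\si$) yields the identity $\nu[\ent_{\Lambda_i}(f)]=\nu[\ent_\nu(f\tc\si)]+\mu[\ent_{V_i}(\bar f)]$, and in particular $\mu[\ent_{V_i}(\bar f)]\le\nu[\ent_{\Lambda_i}(f)]$. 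Substituting into \eqref{plan:split} gives
\begin{equation}\label{plan:mid}
\ent_\nu(f)\ \le\ \nu[\ent_\nu(f\tc\si)]+C_{\rm par}\sum_{i=1}^k\nu[\ent_{\Lambda_i}(f)].
\end{equation}

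The third and main step is a spin/edge factorization for the conditional Edwards--Sokal measure: for each $i$,
\begin{equation}\label{plan:key}
\nu[\ent_{\Lambda_i}(f)]\ \le\ c\,\big(\nu[\ent_\nu(f\tc\si)]+\nu[\ent_\nu(f\tc A)]\big),\qquad c=O(\beta\Delta e^{\beta\Delta}).
\end{equation}
Fix a pinning $\tau$ on $V\setminus V_i$. Because $V_i$ is an independent set, the conditional measure factorizes,
\[
\nu(\cdot\tc\tau)=\Big(\bigotimes_{v\in V_i}\rho^\tau_v\Big)\otimes\pi^\tau,
\]
where $\rho^\tau_v$ lives on the pair (spin at $v$, edges incident to $v$), a set of size at most $q\,2^\Delta$, and $\pi^\tau$ is a product over the edges with no endpoint in $V_i$; a direct computation gives $\rho^\tau_v(\si_v=a)\propto e^{\beta|M_a|}$, with $M_a$ the set of edges at $v$ that are monochromatic when $\si_v=a$, and $\rho^\tau_v(A_v=\emptyset)\ge e^{-\beta\Delta}$. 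One first proves a \emph{local} spin/edge factorization for each $\rho^\tau_v$ with constant $O(\beta\Delta e^{\beta\Delta})$: the edges at $v$ form a genuine product given $\si_v$, so only the single spin $\si_v$ has to be decoupled, and the cost is governed by the weak coupling between $\si_v$ and the edges at $v$ (quantified by the mass estimate for $\{A_v=\emptyset\}$ together with the Edwards--Sokal weights $e^{\beta|M_a|}$). One then has to combine these local factorizations over the vertices of $V_i$ into a two-block factorization of $\nu(\cdot\tc\tau)$ over the partition (all edges, $\si_{V_i}$); finally, averaging over $\tau\sim\mu$, the edge term is exactly $\nu[\ent_\nu(f\tc\si)]$ while the spin term is bounded by $\nu[\ent_\nu(f\tc A)]$ using the monotonicity of Lemma~\ref{lem:mono} (resampling $\si_{V_i}$ is dominated by resampling all of $\si$).

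Combining \eqref{plan:key} with \eqref{plan:mid} and using $k\le\Delta+1$ gives $\ent_\nu(f)\le\big(1+(\Delta+1)C_{\rm par}c\big)\big(\nu[\ent_\nu(f\tc\si)]+\nu[\ent_\nu(f\tc A)]\big)$, which is the claim with $C=C_{\rm par}\times O(\beta\Delta^2 e^{\beta\Delta})$. I expect the main obstacle to be the third step, and more precisely the passage from the local per-vertex factorizations to a factorization of the full conditional measure $\nu(\cdot\tc\tau)$ with a constant that does \emph{not} degrade with $n$: a naive reduction to single-site spin moves on the whole graph is hopeless, since a sum of single-site spin entropies is not controlled by the global ``resample all spins'' entropy — this is exactly why the coarse independent blocks $V_i$ must be kept intact, so that only $k=O(\Delta)$ of them enter \eqref{plan:mid}, and why the correct $\beta$- and $\Delta$-dependence must be extracted from the Edwards--Sokal weights one vertex at a time.
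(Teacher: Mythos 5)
Your proposal is correct and follows essentially the same route as the paper's proof: your steps 1--2 are the content of Lemma~\ref{lem:conc} (the identity $\nu[\ent_\nu(f\tc\overline{\si_{V_i}})]=\nu[\ent_\nu(f\tc\si)]+\mu[\ent_{V_i}(\bar f)]$ plus $k$-partite factorization applied to $\bar f=\nu(f\tc\si)$), and your step 3 is exactly Lemmas~\ref{lem:tensor} and~\ref{lem:conv} (per-vertex spin/edge factorization for the product measure $\nu(\cdot\tc\overline{\si_{V_i}})=\otimes_{v\in V_i}\rho^\tau_v\otimes\pi^\tau$ with constant $O(\beta\Delta e^{\beta\Delta})$, tensorized over $V_i$ as in Lemma 4.8 of \cite{BCPSV20}, then monotonicity to replace $\ent_\nu(f\tc\overline{\si_{V_i}},A)$ by $\ent_\nu(f\tc A)$). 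The only difference is cosmetic: you run the chain of inequalities from $\ent_\nu(f)$ upward rather than assembling the three lemmas downward, and the two pieces you leave sketched (the local one-vertex factorization and its tensorization) are precisely the ones the paper also delegates to \cite{BCPSV20}.
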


We call \eqref{entfact2o-intro} the {\em spin/edge factorization of entropy} with constant $C$ for the joint measure $\nu$. The main motivation for this inequality is the result established in \cite[Lemma 1.8]{BCPSV20} that  on any $n$-vertex graph, approximate spin/edge factorization with constant $C$ implies that the SW dynamics has discrete time entropy decay with rate $\delta=1/C$, and therefore, by Lemma \ref{lem:mix}, satisfies $\tmix = O(\log n)$. 
Theorem \ref{thm:sw:general} from the introduction now follows immediately.

\begin{proof}[Proof of Theorem \ref{thm:sw:general}]
	For the Potts model one has $e^{\beta\Delta}=O(1/b)$. Therefore, the results follows from Theorem~\ref{th:reform},
	Lemma~\ref{lem:ubf:boosting},
	Theorem~\ref{th:main-intro2}
	and  \cite[Lemma 1.8]{BCPSV20}. 
\end{proof}

Let $\{V_1,...,V_k\}$ be the $k$-partition of $G$, where $k \leq \Delta+1$, as in Section \ref{sec:ubf:to:gbf}.
For all $j\in[k]$ let $\nu(\cdot\tc\overline{\si_{V_j}},A)$ denote the measure $\nu$ conditioned on $\overline{\si_{V_j}}=\{\si_v,\,v\notin V_j\}$ and $A \subset E$.
We use $\ent_\nu(f\tc \overline{\si_{V_j}},A)$ to denote the corresponding conditional entropy and $\nu\left[\ent_\nu (f\tc\overline{\si_{V_j}},A)\right]$ for its expectation with respect to $\nu$.
Theorem~\ref{th:main-intro2} will follow from the following lemmas.

\begin{lemma}
	\label{lem:conv}
	For  all $f: \Joint \mapsto \R_+$ and all $j \in [k]$ we have
	\begin{align*}
	&\nu\left[\ent_\nu (f\tc A)\right]\geq
	\nu\left[\ent_\nu (f\tc \overline{\sigma_{V_j}},A)\right].
	\end{align*}
\end{lemma}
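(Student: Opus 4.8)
The plan is to read the asserted inequality as the elementary fact that conditioning on the pair $(\overline{\sigma_{V_j}}, A)$ refines conditioning on $A$ alone, so that the expected conditional entropy of $f$ can only decrease; I would make this precise by applying the entropy decomposition of Lemma~\ref{lem:telescope} fiberwise over the edge configuration. First, fix an edge configuration $A \subset E$ with $\nu(A) > 0$ and let $m := \nu(\cdot \tc A)$ be the corresponding conditional probability measure on the spin space $\Omega$; under $m$ each connected component of $(V,A)$ receives an i.i.d.\ uniform spin in $[q]$, but this product-over-components structure plays no role in the argument. Viewing $f(\cdot, A)$ as a nonnegative function of the spin configuration only, Lemma~\ref{lem:telescope} applied to $m$ with $\Lambda = V_j$, together with nonnegativity of the entropy functional (the remainder term in that decomposition is nonnegative), yields
\[
\ent_\nu(f \tc A) \;=\; \ent_m(f) \;\ge\; m\big[\ent_{V_j}(f)\big] \;=\; \nu\big[\ent_\nu(f \tc \overline{\sigma_{V_j}}, A) \mid A\big],
\]
where the last equality is just the unpacking of notation: $m[\ent_{V_j}(f)]$ is the average over $\overline{\sigma_{V_j}}$ sampled from $m = \nu(\cdot\tc A)$ of the entropy of $f$ with respect to $\nu(\cdot \tc \overline{\sigma_{V_j}}, A)$.

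Having obtained, for every feasible $A$, the bound $\ent_\nu(f \tc A) \ge \nu[\ent_\nu(f \tc \overline{\sigma_{V_j}}, A) \mid A]$, the remaining step is to take expectation over $A$ with respect to $\nu$ (equivalently, with respect to the random-cluster edge marginal of $\nu$): by the tower property of conditional expectation,
\[
\nu\big[\ent_\nu(f \tc A)\big] \;\ge\; \nu\big[\, \nu[\ent_\nu(f \tc \overline{\sigma_{V_j}}, A) \mid A]\,\big] \;=\; \nu\big[\ent_\nu(f \tc \overline{\sigma_{V_j}}, A)\big],
\]
which is exactly the claim.

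I do not expect any genuine obstacle here: the only care needed is bookkeeping of which variables each conditional entropy depends on --- $\ent_\nu(f\tc A)$ is a function of $A$ only, whereas $\ent_\nu(f\tc\overline{\sigma_{V_j}},A)$ is a function of the pair $(\overline{\sigma_{V_j}},A)$ --- and observing that Lemma~\ref{lem:telescope}, although stated for the base measure $\mu$ of the spin system, is a general identity for the entropy of nonnegative functions and applies verbatim to the auxiliary measure $m = \nu(\cdot\tc A)$. One could alternatively deduce the fiberwise bound directly from the monotonicity statement of Lemma~\ref{lem:mono} applied to $m$ with the nested sets $V_j \subseteq V$, followed by the same averaging over $A$.
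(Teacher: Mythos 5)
Your proposal is correct and follows essentially the same route as the paper: the paper proves this lemma by invoking the monotonicity of conditional entropy (Lemma~\ref{lem:mono}, itself a consequence of the decomposition in Lemma~\ref{lem:telescope}) applied in the joint space, citing the corresponding argument in \cite{BCPSV20}, which is precisely your fiberwise decomposition over $A$ plus nonnegativity of the remainder and the tower property. Your write-up simply makes explicit what the paper leaves as a reference.
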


\begin{lemma}\label{lem:tensor}
	There exists a constant $\d_1>0$ such that, for  all $f: \Joint \mapsto \R_+$ and all $j \in [k]$,
	\begin{align*}
	&\nu\left[\ent_\nu (f\tc \si)\right] + \nu\left[\ent_\nu (f\tc \overline{\si_{V_j}},A)\right]
	\geq \d_1\,\nu\left[\ent_\nu (f\tc \overline{\si_{V_j}})\right].	
	\end{align*}
	The constant $\d_1$ satisfies $1/\d_1=O(\beta\Delta e^{\beta\Delta})$.
\end{lemma}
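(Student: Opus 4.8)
The plan is to fix $j\in[k]$ and a boundary configuration $\xi$ on $V\setminus V_j$, reduce to proving for the conditional measure $\tilde\nu:=\nu(\cdot\tc\overline{\si_{V_j}}=\xi)$ the two‑block factorization
\[
\tilde\nu\big[\ent_{\tilde\nu}(f\tc\si_{V_j})\big]\;+\;\tilde\nu\big[\ent_{\tilde\nu}(f\tc A)\big]\;\ge\;\tfrac1K\,\ent_{\tilde\nu}(f),
\]
and then average over $\xi$ with respect to the $\overline{\si_{V_j}}$‑marginal of $\nu$ to recover the lemma with $\d_1=1/K$. The first point is a product structure coming from $V_j$ being an independent set: every edge incident to $V_j$ meets $V_j$ in exactly one vertex, so the constraint $A\subseteq M(\si)$ splits into one constraint per vertex of $V_j$, while the interactions inside $V\setminus V_j$ are frozen by $\xi$. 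Writing $\partial u$ for the edges incident to $u\in V_j$ and $A_u:=A\cap\partial u$, this gives
\[
\tilde\nu \;=\; \Big(\bigotimes_{u\in V_j}\rho_u\Big)\otimes\Big(\text{independent Bernoulli$(p)$ or point‑mass factors for the remaining edges}\Big),
\]
where $\rho_u$ is the law of the pair $(\si_u,A_u)$; conditionally on $\si_u$ the edges in $A_u$ are independent. In this language $\ent_{\tilde\nu}(f\tc\si_{V_j})$ becomes the block entropy over all edge coordinates, $\ent_{\tilde\nu}(f\tc A)$ the block entropy over all spin coordinates $(\si_u)_{u\in V_j}$, and $\ent_{\tilde\nu}(f)$ the full entropy; the two blocks partition the coordinate set but cut transversally across the product.

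The second step is a single‑vertex estimate: for every $u\in V_j$ (and trivially, with constant $1$, for each remaining edge factor),
\[
\ent_{\rho_u}(g)\;\le\;K\big(\rho_u[\ent_{\si_u}g]+\rho_u[\ent_{A_u}g]\big)\qquad\text{for all }g\ge 0,
\]
with $K=O(\beta\Delta\,e^{\beta\Delta})$ uniformly in $u$. Equivalently this is a modified log‑Sobolev estimate for the two‑component Gibbs sampler on $\rho_u$ that alternates resampling $\si_u$ given $A_u$ and $A_u$ given $\si_u$. The finiteness and the quantitative control of $K$ come from the fact that $\rho_u$ is irreducible in a bounded number of moves: the empty edge‑configuration on $\partial u$ is compatible with every spin value and carries probability bounded below in terms of $\beta,\Delta$, so any two states are joined by a path of length $\le 3$ through it whose transition probabilities are bounded below; combining a minorization/spectral‑gap bound with a Diaconis–Saloff‑Coste type comparison of MLSI with the spectral gap (and $\min\rho_u\ge e^{-O(\beta\Delta)}$) produces the claimed order. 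The "spin plus conditionally independent edges" form of $\rho_u$ makes these computations explicit.

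The third step promotes the single‑vertex factorization to $\tilde\nu$: \emph{the transversal two‑block factorization tensorizes over products with the same constant}, which I would prove as a self‑contained lemma by induction on the number of factors. Peel off one factor $u_0$ and decompose $\ent_{\tilde\nu}(f)=\tilde\nu[\ent_{\zeta_{u_0}}f]+\ent_{\tilde\nu'}(\tilde\nu_{\zeta_{u_0}}f)$ by Lemma~\ref{lem:telescope}, where $\zeta_{u_0}$ are the coordinates of the $u_0$‑factor and $\tilde\nu'$ is the product over the remaining factors; bound the first term by $K\big(\tilde\nu[\ent_{\si_{u_0}}f]+\tilde\nu[\ent_{A_{u_0}}f]\big)$ using the single‑vertex estimate inside the conditional expectation, and the second term by the induction hypothesis applied to $\tilde\nu_{\zeta_{u_0}}f$. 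Reassembling then uses only: the chain‑rule inequality $\mu[\ent_{A\cup C}h]\ge\mu[\ent_A h]+\mu[\ent_C(\mu_A h)]$ (from Lemmas~\ref{lem:telescope} and~\ref{lem:mono}), the joint convexity of $(x,y)\mapsto x\log(x/y)$, and the commutation of the averaging operators over disjoint factors; these force the spin‑part contributions to sum to at most $\tilde\nu[\ent_{\text{spins}}f]$ and the edge‑part contributions to at most $\tilde\nu[\ent_{\text{edges}}f]$, giving $\ent_{\tilde\nu}(f)\le K\big(\tilde\nu[\ent_{\text{spins}}f]+\tilde\nu[\ent_{\text{edges}}f]\big)$ and hence $\d_1=1/K=\Omega\big((\beta\Delta e^{\beta\Delta})^{-1}\big)$.

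The main obstacle is the interaction of the last two steps. The tensorization in step three is the conceptual heart and is precisely where the independent‑set hypothesis pays off, but it is \emph{false} for general product factors (e.g. $\rho$ supported on $\{(0,0),(1,1)\}$ admits no transversal two‑block factorization), so it genuinely consumes the single‑vertex factorization as input and the bookkeeping with conditionings and averaging operators in the induction must be handled carefully. On the quantitative side, the real work is extracting the stated order $O(\beta\Delta e^{\beta\Delta})$ from $\rho_u$ rather than a cruder $e^{O(\beta\Delta)}$ — this is where the explicit "spin with conditionally independent incident edges" structure, and a sharp choice of connecting path, are needed.
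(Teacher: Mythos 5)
Your proposal is correct in outline and takes essentially the same route as the paper: condition on $\overline{\si_{V_j}}$, use that $V_j$ is an independent set to write $\nu(\cdot\tc\overline{\si_{V_j}})$ as a product of single-vertex spin/edge factors (plus trivial edge factors), prove a two-block (spin versus incident edges) factorization for each factor with constant of order $\beta\Delta e^{\beta\Delta}$, and then tensorize over the product using the telescoping decomposition and convexity of entropy, exactly as in \eqref{swent21}--\eqref{swentop}. The only real divergence is the single-vertex estimate itself, which the paper imports as a direct generalization of \cite[Lemma 4.7]{BCPSV20} (an explicit computation on the one-site spin/edge measure), whereas your minorization-plus-Diaconis--Saloff-Coste sketch would pick up extra factors of order $\Delta\log\frac{1}{1-e^{-\beta}}+\log q$ coming from the logarithm of the smallest atom of $\rho_u$, so, as you yourself concede, it does not yet deliver the stated order $O(\beta\Delta e^{\beta\Delta})$ without replacing that generic comparison by the sharper direct argument.
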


\begin{lemma}\label{lem:conc}
	If $\mu$ satisfies the $k$-partite factorization  with constant $C_{\rm par}$, then  for all $f: \Joint \mapsto \R_+$,
	\begin{align*}	
	\sum_{j=1}^k\nu\left[\ent_\nu (f\tc \overline{\si_{V_j}})\right]\geq \d_2 \ent_\nu(f),
	\end{align*}
	where $\d_2=\frac1{C_{\rm par}}$.
\end{lemma}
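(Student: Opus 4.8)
The plan is to reduce the desired inequality on the joint space $\Joint$ to the $k$-partite factorization of entropy for the Potts marginal $\mu$, by conditioning on spins and then integrating out the edge randomness. Throughout, write $\bar f(\si)=\nu(f\tc\si)$, a nonnegative function of the spin coordinate only. First I would record the chain rule (Lemma~\ref{lem:telescope}) applied to $\nu$ with the full spin configuration $\si$ as conditioning variable:
\[
\ent_\nu(f)=\nu\big[\ent_\nu(f\tc\si)\big]+\ent_\nu(\bar f)=\nu\big[\ent_\nu(f\tc\si)\big]+\ent_\mu(\bar f),
\]
where the last equality uses that the spin marginal of $\nu$ is $\mu$ and that $\bar f$ depends on $\si$ alone.

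Next I would argue that, inside the conditional measure $\nu(\cdot\tc\overline{\si_{V_j}})$, conditioning additionally on $\si_{V_j}$ recovers $\nu(\cdot\tc\si)$; so the chain rule for $\nu(\cdot\tc\overline{\si_{V_j}})$, followed by taking the $\nu$-expectation, gives
\[
\nu\big[\ent_\nu(f\tc\overline{\si_{V_j}})\big]=\nu\big[\ent_\nu(f\tc\si)\big]+\nu\big[\ent_{\nu(\cdot\mid\overline{\si_{V_j}})}(\bar f)\big].
\]
The delicate point is to identify the last term: on the support of $\nu(\cdot\tc\overline{\si_{V_j}})$ the function $\bar f$ depends only on $\si_{V_j}$, and the $\si_{V_j}$-marginal of $\nu(\cdot\tc\overline{\si_{V_j}})$ is exactly the pinned Potts measure $\mu_{V_j}^{\overline{\si_{V_j}}}$ (since the spin marginal of $\nu$ is $\mu$, a marginal of a conditional equals the corresponding conditional of the marginal). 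Hence $\ent_{\nu(\cdot\mid\overline{\si_{V_j}})}(\bar f)$ equals $\ent_{V_j}(\bar f)$ evaluated at the pinning $\overline{\si_{V_j}}$, and its $\nu$-expectation is $\mu[\ent_{V_j}(\bar f)]$; that is,
\[
\nu\big[\ent_\nu(f\tc\overline{\si_{V_j}})\big]=\nu\big[\ent_\nu(f\tc\si)\big]+\mu\big[\ent_{V_j}(\bar f)\big].
\]

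Finally I would apply the $k$-partite factorization \eqref{eq:kpart} for $\mu$ to $\bar f$, namely $\ent_\mu(\bar f)\le C_{\rm par}\sum_{j=1}^k\mu[\ent_{V_j}(\bar f)]$, and substitute the previous identity into the first decomposition to obtain
\[
\ent_\nu(f)\le\big(1-kC_{\rm par}\big)\,\nu\big[\ent_\nu(f\tc\si)\big]+C_{\rm par}\sum_{j=1}^k\nu\big[\ent_\nu(f\tc\overline{\si_{V_j}})\big].
\]
Since any $k$-partite factorization constant satisfies $C_{\rm par}\ge1$ — restrict \eqref{eq:kpart} to functions $g$ depending only on the spins in a single class $V_i$, for which the right-hand side equals $\mu[\ent_{V_i}(g)]\le\ent_\mu(g)$ by Lemma~\ref{lem:telescope} — and $k\ge1$, the coefficient $1-kC_{\rm par}$ is nonpositive while $\nu[\ent_\nu(f\tc\si)]\ge0$; dropping that term yields $\ent_\nu(f)\le C_{\rm par}\sum_{j}\nu[\ent_\nu(f\tc\overline{\si_{V_j}})]$, which is the claim with $\d_2=1/C_{\rm par}$. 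The only genuine obstacle is the measure-theoretic bookkeeping in the middle step: one must check carefully that, after integrating out the edges, the conditional entropies of $f$ on $\Joint$ collapse precisely onto the conditional Potts entropies $\mu[\ent_{V_j}(\bar f)]$ that appear in the $k$-partite inequality; everything else is the entropy chain rule together with the elementary bound $C_{\rm par}\ge1$.
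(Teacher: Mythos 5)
Your proof is correct and follows essentially the same route as the paper: you decompose $\nu[\ent_\nu(f\tc\overline{\si_{V_j}})]$ by the entropy chain rule into $\nu[\ent_\nu(f\tc\si)]$ plus $\mu[\ent_{V_j}(\bar f)]$ (using that the spin marginal of the conditioned joint measure is the pinned Potts measure), apply the $k$-partite factorization to $\bar f=\nu[f\tc\si]$, and absorb the leftover spin-conditional entropy terms via $C_{\rm par}\ge 1$ — exactly the paper's argument, merely rearranged algebraically.
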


\begin{proof}[Proof of Theorem \ref{th:main-intro2}]
	By combining the bounds from Lemmas \ref{lem:conv}, \ref{lem:tensor} and \ref{lem:conc} we get
	\begin{align}\label{swent9}
	\nu\left[\ent_\nu (f\tc \si) + \ent_\nu (f\tc A)\right] &\geq  \frac{\d_1\d_2}{k}\ent_\nu(f),
	\end{align}
	and so, using also $k\leq \Delta+1$, the spin/edge factorization holds with constant 
	\[
	C=\frac{k}{\d_1\d_2}=C_{\rm par}\times O(\beta\Delta^2e^{\beta\Delta}). \qedhere
	\]
\end{proof}

We briefly discuss next the proof of Lemmas~\ref{lem:conv}, \ref{lem:tensor} and \ref{lem:conc}, which are
the respective counterparts of  Lemmas 4.3, 4.4 and 4.5 in \cite{BCPSV20} for the bipartite setting.
\begin{proof}[Proof of Lemma~\ref{lem:conv}]	
	This is an instance of the same monotonicity already seen in Lemma \ref{lem:mono}. In this particular case, it follows from the argument in the proof of Lemma 4.3 in \cite{BCPSV20} by simply substituting $\sigma_O$ with $\overline{\sigma_{V_j}}$ in that proof.
\end{proof}

\begin{proof}[Proof of Lemma~\ref{lem:tensor}]	
	Let us fix $j \in [k]$. To simplify the notation, we shall use $xy$ to denote the edge $\{x,y\}$,
	and view the edge configuration $A$ as a vector in $\{0,1\}^E$. For any fixed configuration $\overline{\si_{V_j}}$ of spins, the conditional probability $\nu(\cdot\tc\overline{\si_{V_j}})$ is a product measure.
	That is,
	\begin{align}\label{eo1}
	\nu(\cdot\tc\overline{\si_{V_j}})
	= \bigotimes_{x\in V_j}\nu_x(\cdot\tc\overline{\si_{V_j}}),
	\end{align}
	where,
	for each $x\in V_j$, $\nu_x(\cdot\tc\overline{\si_{V_j}})$ is the probability measure on $\{1,\dots,q\}\times \{0,1\}^{\deg(x)}$
	with $\deg(x)$ denoting the degree of $x$.
	The distribution $\nu_x(\cdot\tc\overline{\si_{V_j}})$ can be described s follows: pick the spin of site $x$ according to the Potts measure on $x$ conditioned on the spin of its neighbors in $V \setminus V_j$; then, independently for every edge $xy \in E$ incident to the vertex $x$, if $\si_x=\si_y$ set $A_{xy}=1$
	with probability $p$ and set $A_{xy}=0$ otherwise; if $\si_x\neq\si_y$, set $A_{xy}=0$.

	The measure $\nu(\cdot\tc\overline{\si_{V_j}},A)$, obtained by further conditioning on a valid configuration of all edge variables $A$ compatible with the fixed spins $\overline{\si_{V_j}}$, is again a product measure:
	\begin{align}\label{eo11}
	\nu(\cdot\tc \overline{\si_{V_j}},A)
	= \bigotimes_{x\in V_j}\nu_x(\cdot\tc\overline{\si_{V_j}},A),
	\end{align}
	where $\nu_x(\cdot\tc\overline{\si_{V_j}},A)$ is the probability measure on $\{1,\dots,q\}$ that is uniform if $x$ has no incident edges in~$A$, and is concentrated on the unique admissible value given $\overline{\si_{V_j}}$ and $A$ otherwise.
	
	Next, we note that $\nu(\cdot\tc\si)$ is a product of Bernoulli($p$) random variables over all monochromatic edges in $\si$, while it is concentrated on $A_e=0$ on all remaining edges. Therefore we may write
	\begin{align}\label{eo12}
	\nu(\cdot\tc\si)
	= \bigotimes_{x\in V}\nu_x(\cdot\tc\si),
	\end{align}
	where $\nu_x(\cdot\tc\si)$ is the probability measure on $\{0,1\}^{\deg(x)}$ given by  the product of Bernoulli($p$) variables on all edges $xy$ incident to $x$ such that $\si_x=\si_y$ and is concentrated on $A_{xy}=0$ if $\si_x\neq\si_y$.
	
	
	We write
	$\ent_x(\cdot\tc \overline{\si_{V_j}})$, $\ent_x(\cdot\tc \overline{\si_{V_j}},A)$, $\ent_x(\cdot\tc \si)$
	for the entropies with respect to the distributions $\nu_x(\cdot\tc \overline{\si_{V_j}})$, $\nu_x(\cdot\tc \overline{\si_{V_j}},A)$, $\nu_x(\cdot\tc \si)$ respectively.
	The first observation is that, for every site $x$, there is a local factorization of entropies in the following sense. There exists a constant $\d_1>0$ such that $1/\d_1=O(\beta\Delta e^{\beta\Delta})$, and such that for all functions $f\geq 0$ and all $x\in V_j$,
	\begin{align}\label{swent18}
	\nu_x\left[\ent_x (f\tc \si)\tc \overline{\si_{V_j}}\right] + \nu_x\left[\ent_x (f\tc \overline{\si_{V_j}},A)\tc \overline{\si_{V_j}}\right]
	\geq \d_1\,\ent_x (f\tc \overline{\si_{V_j}});
	\end{align}
	this follows from a direct generalization of Lemma 4.7 from \cite{BCPSV20} for bipartite graphs; the proof of such generalization to the $k$-partite setting is  the same as that of Lemma 4.7 and is thus omitted.

	
	Next, we want to lift inequality~\eqref{swent18} to the product measure
	$\nu(\cdot\tc\overline{\si_{V_j}})=\otimes_{x\in V_j}\nu_x(\cdot\tc\overline{\si_{V_j}})$.
	Let $x=1,\dots, n$ denote an arbitrary ordering of the sites $x\in V_j$.
	Let $A_x \in \{0,1\}^{\deg(x)}$ be the random variable corresponding to the state of the edges incident to $x$. We write $\xi_x=(\si_x,A_x)$ for the pair of variables corresponding to $x$.
	We first observe that
	\begin{align}\label{swent21}
	\ent_\nu(f\tc\overline{\si_{V_j}}) = \sum_{x=1}^n
	\nu\left[ \ent_x (g_{x-1}\tc\overline{\si_{V_j}})\tc \overline{\si_{V_j}}\right],
	\end{align}
	where $g_x=\nu\left[f\tc \overline{\si_{V_j}},\xi_{x+1},\dots,\xi_{n}\right]$,
	$g_0=f$ and $g_{n}=\nu\left[f\tc \overline{\si_{V_j}}\right]$. This identity is an instance of the decomposition in Lemma \ref{lem:telescope}.
	
	
	Putting together \eqref{swent18} and \eqref{swent21} yields
	\begin{align}
	\d_1\,\ent_\nu (f\tc \overline{\si_{V_j}})&\leq
	\sum_{x=1}^n
	\nu\left[\nu_x\left[\ent_x (g_{x-1}\tc \si)\tc \overline{\si_{V_j}}\right] + \nu_x\left[\ent_x (g_{x-1}\tc \overline{\si_{V_j}},A)\tc \overline{\si_{V_j}}\right]
	\tc \overline{\si_{V_j}}\right]\notag\\&
	= \sum_{x=1}^n
	\nu\left[\ent_x (g_{x-1}\tc \si) + \ent_x (g_{x-1}\tc \overline{\si_{V_j}},A)
	\tc \overline{\si_{V_j}}\right].
	\label{swent22}
	\end{align}
	Proceeding as in the proof of Lemma 4.8  from \cite{BCPSV20}, we obtain the following two inequalities:
	\begin{align*}
	\sum_{x=1}^n
	\nu\left[\ent_x (g_{x-1}\tc \si)\tc \overline{\si_{V_j}}\right] &\leq \nu\left[\ent_\nu (f\tc \si)\tc \overline{\si_{V_j}}\right], \\
	\sum_{x=1}^n
	\nu\left[ \ent_x (g_{x-1}\tc \overline{\si_{V_j}},A)
	\tc \overline{\si_{V_j}}\right]
	&\leq \nu\left[\ent_\nu (f\tc \overline{\si_{V_j}},A)\tc \overline{\si_{V_j}}\right].
	\end{align*}
	These two inequalities combined with~\eqref{swent22} yields that
	\begin{align}\label{swentop}
	\d_1\,\ent_\nu (f\tc \overline{\si_{V_j}})
	\leq  \nu\left[\ent_\nu (f\tc \si)\tc \overline{\si_{V_j}}\right] + \nu\left[\ent_\nu (f\tc \overline{\si_{V_j}},A)\tc \overline{\si_{V_j}}\right].
	\end{align}
	The results follows by taking expectations with respect to $\nu$ in \eqref{swentop}.
\end{proof}

\begin{proof}[Proof of lemma \ref{lem:conc}]
	From the definition of conditional entropy and the fact that $\nu(\cdot\tc \si_{V_j},\overline{\si_{V_j}})=\nu(\cdot\tc \si)$ we get
	\begin{align}
	\label{eq1}
	\ent_\nu (f\tc \overline{\si_{V_j}})
	= \ent_\nu\left(\nu\left[f\tc \si\right]\tc \overline{\si_{V_j}}\right) + \nu\left[\ent_\nu (f\tc \si)\tc \overline{\si_{V_j}}\right].
	\end{align}
	(see eq. (4.5), (4.6) from Lemma 4.5 in \cite{BCPSV20}).	
	Now, since the function 	
	$\nu\left[f\tc \si\right]$ depends only on the spin configuration $\si$,  one has the identity
	\begin{align}
	\label{eq2}
	\sum_{j=1}^k\nu\left[\ent_\nu (\nu[f\tc \si]\tc \overline{\si_{V_j}})\right]
	=
	\sum_{j=1}^k\mu\left[\ent (\nu[f\tc \si]\tc \overline{\si_{V_j}})\right],
	\end{align}
	where the entropy in the right hand side is with respect to $\mu$ and not with respect to $\nu$. 
	Since $k$-partite factorization holds by assumption, 
	\begin{align}
	\label{eq3}
	\sum_{j=1}^k\mu\left[\ent (\nu[f\tc \si]\tc \overline{\si_{V_j}})\right]
	\geq
	\d_2\, \ent \left(\nu\left[f\tc \si\right]\right),
	\end{align}
	where $\d_2=1/C_{\rm par}$. By taking functions depending only on $\sigma_{V_j}$ for a single $V_j$ one easily sees that $C_{\rm par}$ must be at least $1$. 
	Then, taking expectation and summing over $j$ in \eqref{eq1},
	and combining with \eqref{eq2} and \eqref{eq3},  we get
	\begin{align*}
	\sum_{j=1}^k\nu\left[\ent_\nu (f\tc \overline{\si_{V_j}})\right]
	\geq
	\d_2\, \ent_\nu \left(\nu\left[f\tc \si\right]\right) + k\,\nu\left[\ent_\nu (f\tc \si)\right].
	\end{align*}
Using the simple decomposition $ \ent_\nu (f)=\ent_\nu \left(\nu\left[f\tc \si\right]\right) + \nu\left[\ent_\nu (f\tc \si)\right]$, and the fact that $\d_2\leq 1\leq k$, 
we conclude that 
	\begin{align*}
	\sum_{j=1}^k\nu\left[\ent_\nu (f\tc \overline{\si_{V_j}})\right]
	\geq
	\d_2\, \ent_\nu (f). &\qedhere
	\end{align*}
\end{proof}

\printbibliography

\end{document}